\documentclass[12pt]{amsart}

\usepackage{amssymb}
\usepackage{latexsym}
\usepackage{verbatim,longtable}
\usepackage{fullpage,color}
\usepackage{euscript}

\input {cyracc.def}
\tolerance=5000
\numberwithin{equation}{section}

\setcounter{tocdepth}{1}

\usepackage[colorlinks=true,linkcolor=blue,urlcolor=my_color,citecolor=magenta]{hyperref}

 \font\tencyr=wncyr10 
\font\tencyi=wncyi10 
\font\tencysc=wncysc10 
\def\rus{\tencyr\cyracc}
\def\rusi{\tencyi\cyracc}
\def\rusc{\tencysc\cyracc}

\newtheorem{thm}{Theorem}[section]
\newtheorem{conj}[thm]{Conjecture}
\newtheorem{lm}[thm]{Lemma}

\newtheorem{prop}[thm]{Proposition}

\theoremstyle{remark}
\newtheorem{rmk}[thm]{Remark}
\newtheorem*{rem}{Remark}

\theoremstyle{definition}
\newtheorem{ex}[thm]{Example}
\newtheorem{df}{Definition}



\newcommand {\be}{{\mathfrak b}}
\newcommand {\ce}{{\mathfrak c}}

\newcommand {\g}{{\mathfrak g}}

\newcommand {\te}{{\mathfrak t}}
\newcommand {\ut}{{\mathfrak u}}
\newcommand {\z}{{\mathfrak z}}


\newcommand {\cI}{{\mathcal I}}


\newcommand {\BZ}{{\mathbb Z}}

\newcommand {\BR}{{\mathbb R}}
\newcommand {\BC}{{\mathbb C}}


\newcommand{\lb}{\lambda}
\newcommand{\ap}{\alpha}
\newcommand{\vp}{\varphi}

\renewcommand{\le}{\leqslant}
\renewcommand{\ge}{\geqslant}
\newcommand{\curge}{\succcurlyeq}
\newcommand{\curle}{\preccurlyeq}

\newcommand{\eus}{\EuScript}

\newcommand {\ad}{{\mathrm{ad}}}

\newcommand {\hot}{{\mathsf{ht}}}

\newcommand {\rk}{{\mathsf{rk\,}}}

\newcommand {\AN}{\mathfrak{An}}
\newcommand {\anp}{\AN(\eus P)}

\newcommand {\anod}{\AN(\Delta(1)\!)}
\newcommand {\jdod}{\eus J_-(\Delta(1)\!)}
\newcommand {\jdodp}{\eus J_+(\Delta(1)\!)}
\newcommand {\mdt}{\eus M_{\Delta(1)}(t)}

\newcommand {\HN}{\widehat N}
\newcommand {\HW}{\widehat W}
\newcommand {\HV}{\widehat V}
\newcommand {\HP}{\widehat\Pi}
\newcommand {\HD}{\widehat\Delta}

\newcommand {\tri}{\mathfrak{sl}_2}
\newcommand {\GR}[2]{{\textrm{{\bf #1}}}_{#2}}

\newcommand {\ov}{\overline}

\newcommand {\beq}{\begin{equation}}
\newcommand {\eeq}{\end{equation}}

\definecolor{my_color}{rgb}{0,0.5,0.5}
\definecolor{MIXT}{rgb}{0.7,0.1,0.2}
\definecolor{MIX}{rgb}{0.4,0.3,0.6}

\begin{document}
\hfill {{
\scriptsize December 2, 2014}}
\vskip1ex

\title[Weight posets associated with gradings]
{{\color{MIX}Weight posets associated with gradings of simple Lie algebras, Weyl groups, and arrangements of hyperplanes}}
\author[D.\,Panyushev]{Dmitri I.~Panyushev}
\address{Institute for Information Transmission Problems of the R.A.S., 
\hfil\break\indent Bol'shoi Karetnyi per. 19, Moscow 
127994, Russia}
\email{panyushev@iitp.ru}
\address[]{Independent University of Moscow,
Bol'shoi Vlasevskii per. 11, 119002 Moscow, \ Russia}
\keywords{Root system, graded Lie algebra, lower ideal, Coxeter arrangement}
\subjclass[2010]{06A07, 17B20, 20F55}
\maketitle


\section{Introduction}  
\label{sec:intro}

\noindent
The set of weights of a finite-dimensional representation of a reductive Lie algebra has a natural poset structure ("weight poset"). Studying certain combinatorial problems related to antichains in weight posets,
we realised that the best setting is provided by the representations associated with $\BZ$-gradings of simple
Lie algebras \cite{ja}. This article, which can be regarded as a sequel to \cite{ja}, is devoted to a general 
theory of ideals (antichains) in the corresponding weight posets. Although the subject has interesting representation-theoretic aspects, we work here almost exclusively in the combinatorial setup. Specifically, our main object is going to be a $\BZ$-graded root system.

Let $V$ be an $n$-dimensional Euclidean space, with inner product $(\ ,\ )$, and let $\Delta$ be an 
irreducible, crystallographic root system spanning $V$. We refer to \cite{bour,hump} for basic 
definitions and properties of root systems. Let $\Delta^+$ be a set of positive roots and 
$\Pi=\{\ap_1,\dots,\ap_n\}$ the set of {simple roots} in $\Delta^+$. The usual partial order
``$\curle$'' in $\Delta^+$ is defined by the requirement that $\gamma$ covers $\mu$ if and only if 
$\gamma-\mu\in\Pi$. 
A $\BZ$-{\it grading\/} of $\Delta$ is a disjoint union $\Delta=\bigsqcup_{i\in \BZ}\Delta(i)$ such that
if $\gamma_1\in \Delta(i_1)$, $\gamma_2\in \Delta(i_2)$, and $\gamma_1+\gamma_2$ is a root, then
$\gamma_1+\gamma_2\in \Delta(i_1+i_2)$. Then $\Delta(0)$ is a root system in its own sense. 
We always assume that $\Delta^+$ is {\it compatible\/} with $\BZ$-grading, which means that
\beq    \label{eq:comptible-gr}
    \Delta^+=\Delta(0)^+\sqcup \Delta(1)\sqcup\Delta(2)\sqcup\ldots ,
\eeq
where $\Delta(0)^+$ is a set of positive roots in $\Delta(0)$. Then $\Pi=\sqcup_{i\ge 0}\Pi(i)$, where
$\Pi(i)=\Pi\cap\Delta(i)$, and $\Pi(0)$ is a set of simple roots for $\Delta(0)$.
Each $\Delta(i)$, $i\ge 1$, can be regarded as a sub-poset of $\Delta^+$, and 
we are primarily interested in the poset $\Delta(1)$. 

Let $\jdod$ be the set of lower (=\,order) ideals in $\Delta(1)$. We relate $\jdod$ to certain 
elements in the {Weyl group} $W$ of $\Delta$ and certain hyperplane arrangements inside the 
Coxeter arrangement of $\Delta$. The Weyl group of $\Delta(0)$, $W(0)$, is a parabolic subgroup of 
$W$.  Let $W^0$ be the set of minimal length coset
representatives for $W/W(0)$. It is known that 
\beq     \label{eq:w0-def}
         W^0=\{w\in W \mid w(\ap)\in \Delta^+ \quad \forall \ap\in\Delta(0)^+\} ,
\eeq
see \cite[1.10]{hump}.
Let $N(w)=\{\gamma\in \Delta^+\mid -w(\gamma) \in \Delta^+\}$ be the {\it inversion set\/} of $w\in W$ and $w\mapsto \ell(w)=\# N(w)$ the length function on $W$. 
By a classical result of {Kostant}~\cite[Prop.\,5.10]{ko61}, $M\subset \Delta^+$ is the inversion set of some $w$ if and only if 
both $M$ and $\Delta^+\setminus M$ are closed under addition. Such an $M$ is said to be
{\it bi-convex}.

Our basic results on the lower ideals in $\Delta(1)$ and related elements of $W^0$ are presented in Section~\ref{sect:min-and-max-els}.
It is readily seen that if $w\in W^0$, then
$I_w:=N(w)\cap\Delta(1)$ is a lower ideal of $\Delta(1)$, which yields the map
\[
   \tau: W^0\to \jdod, \quad w\mapsto \tau(w):=I_w .
\]
For any $I\in \jdod$, we construct two extreme bi-convex subsets of $\Delta^+$ that belong to $\bigcup_{k\ge 1}\Delta(k)$ and whose $1$-component is $I$, see Theorems~\ref{thm:biconvex-} and 
\ref{thm:biconvex+}. This implies that $\tau$ is onto and $\tau^{-1}(I)$ contains a unique element of
minimal and  of 
maximal length. These two elements of $W$ are said to be the {\it minimal\/} and the 
{\it maximal elements\/} of $I$, denoted $w_{I,{\rm min}}$ and $w_{I,{\rm max}}$, respectively.
Furthermore, we observe that $\tau^{-1}(I)$ is an interval w.r.t. the weak Bruhat order ``$\leq$'' on $W^0$;
that is, 
$
    \tau^{-1}(I)=\{w\in W^0\mid w_{I,{\rm min}}\leq w\leq w_{I,{\rm max}} \}
$, see Theorem~\ref{thm:interval-weak-Br}. 

Let $W^0_{\rm min}$ (resp. $W^0_{\rm max}$) be the subset 
of $W^0$ that consists of the minimal (resp. maximal) elements of all lower ideals. 
We provide a characterisation of each subset that
does not refer to lower ideals. Set $\Delta({\ge} k)=\sqcup_{j\ge k} \Delta(j)$ and $\Delta({\le} k)=\sqcup_{j\le k} \Delta(j)$. Then 
\begin{gather*}
  W^0_{\rm min}=\{w\in W^0\mid w^{-1}(\ap)\in \Delta({\ge}{-}1) \ \text{ for all }\ap\in\Pi \},  
   \text{  (Theorem~\ref{thm:0-min-apriori})};
  \\
  W^0_{\rm max}=\{w\in W^0\mid w^{-1}(\ap)\in \Delta({\le}1) \ \text{ for all }\ap\in\Pi \} ,
  \text{ (Theorem~\ref{thm:0-max-apriori})}.
\end{gather*}
We also point out a connection between an involution on $\jdod$, involution on $W^0$, and the subset
$W^0_{\rm min}$ and $W^0_{\rm max}$ (Proposition~\ref{lem:min-max-in-W^0}).

As an application of our minimal/maximal elements, we describe the antichains related to the lower ideals. 
Let $\min(M)$ and $\max(M)$ denote the minimal and  maximal elements of a subset $M$ w.r.t. the 
poset structure of $\Delta(1)$. For $I\in\jdod$, one may consider two antichains:  
$\max(I)$ and $\min(\Delta(1)\setminus I)$. 
Given $\gamma\in\Delta(1)$, our result is that

\textbullet\quad  
 $\gamma\in \max(I)$ if and only if $w_{I,\text{min}}(\gamma)\in -\Pi$, see Theorem~\ref{thm:max-roots};

\textbullet\quad  
$\gamma\in \min(\Delta(1)\setminus I)$ if and 
only if $w_{I,\text{max}}(\gamma)\in \Pi$, see  Theorem~\ref{thm:min-roots}.

\vskip1ex\noindent
Associated with $\Delta^+$ and $\Delta(0)^+$, there are two open dominant chambers, 
$\eus C^o=\{v\in V\mid (v,\ap)>0 \ \ \forall \ap\in\Pi\}$ and $\eus C(0)^o=\{v\in V\mid (v,\ap)> 0\ \ \forall\ap\in\Pi(0)\}$. The chambers $w(\eus C^o)$, $w\in W$, are said to be {\it small}.
Let $\eus H_\gamma$ denote the hyperplane in $V$ orthogonal to $\gamma\in\Delta^+$. 
The hyperplanes $\eus H_\gamma$ with $\gamma\in\Delta(1)$ dissect $\eus C(0)^o$ into certain 
regions, and we prove that there is a natural bijection between  $\jdod$ and the set of these
regions. Moreover, if $\eus R_I^o\subset \eus C(0)^o$ is the open 
region corresponding to $I$, then $w_{I,{\rm min}}^{-1}(\eus C^o)$ is the unique small chamber in $\eus R_I^o$ closest
to $\eus C^o$ and $w_{I,{\rm max}}^{-1}(\eus C^o)$ is the unique small chamber in $\eus R_I^o$ farthest from
$\eus C^o$ (Theorem~\ref{thm:min-max-&-chambers}).
This result prompts considering the hyperplane arrangement 
$\eus A_\Delta(0,1)=\{\eus H_\gamma \mid \gamma\in\Delta(0)^+\cup \Delta(1)\}$ in $V$.

It is well known that the whole {\it Coxeter arrangement\/} 
$\eus A_\Delta=\{\eus H_\gamma \mid \gamma\in\Delta^+\}$
is free and its exponents are just the usual exponents of $W$~\cite[Ch.\,6]{OS}. 
We conjecture that the arrangement $\eus A_\Delta(0,1)$ is also free and its exponents are determined by certain partition 
associated with $\Delta(0)^+\cup \Delta(1)$ (Conjecture~\ref{conj:free-arr}). Actually, this is a special
case of a more general conjecture that is discussed in the Introduction of~\cite{som-tym}.
Moreover, by \cite[Theorem\,11.1]{som-tym}, that general conjecture and hence our
Conjecture~\ref{conj:free-arr} are true if $\Delta$ is classical 
or of type $\GR{G}{2}$. For $\gamma\in\Delta$, let 
$[\gamma:\ap_i]$ be the coefficient of $\ap_i$ in the expression of $\gamma$ via the simple roots.
The {\it height\/} of $\gamma$ is $\hot(\gamma)=\sum_{i=1}^n [\gamma:\ap_i]$. We deduce from Conjecture~\ref{conj:free-arr} that 
\[
  \#\jdod=\prod_{\gamma\in\Delta(1)}\frac{\hot(\gamma)+1}{\hot(\gamma)} .
\]
This equality has also been proved in \cite{ja}, by {\sl ad hoc\/} methods, for the abelian and extra-special gradings of $\Delta$ (see Section~\ref{subs:classes} for their definitions).

An inspiring observation is that, to a great extent, the theory of lower ideals in $\Delta(1)$
is parallel (similar) to the theory of upper (=\,{\sl ad}-nilpotent) ideals in the poset $(\Delta^+,\curle)$. The  
latter will be referred to as the {\it affine theory}, because it requires the use of the affine Weyl group $\HW$ 
and the affine root system $\HD$. We discuss this parallelism in Section~\ref{sect:versus}.

In Appendix~\ref{app:A}, we give a case-free proof of an observation in \cite[Prop.\,3.1]{som-tym} to the 
effect that certain sequence associated with an upper ideal of $\Delta^+$ is, actually, 
a partition. This fact is also needed for Conjecture~\ref{conj:free-arr}.

{\small
{\bf Acknowledgements.} Part of this work was done while I was able to use
rich facilities of the Max-Planck Institut f\"ur Mathematik (Bonn).
}

\section{Weight posets and gradings of simple Lie algebras}   
\label{sec:general}

\noindent
Let $(\eus P, \curle)$ be a finite poset. A {\it lower\/} (resp. {\it upper\/}) {\it ideal\/} $I$ is a subset of $\eus P$ 
such that if $\mu\in I$ and $\nu\curle \mu$ (resp. $\nu\curge\mu$), then $\nu\in I$.
Let  $\eus J_-(\eus P)$ be the set of lower ideals, $\eus J_+(\eus P)$ the set of 
upper ideals, and $\anp$ the set of antichains in $\eus P$.
For any $M\subset \eus P$, let $\min(M)$ (resp. $\max(M)$) denote the set of minimal (resp. maximal)
elements of $M$ with respect to `$\curle$'. The following three maps set up bijections between the respective pairs of sets:
\begin{gather*}
I\in \eus J_-(\eus P) \mapsto \max(I)\in \anp , \quad
I\in \eus J_+(\eus P) \mapsto \min(I)\in \anp , \\
I\in \eus J_-(\eus P) \mapsto I^{\boldsymbol c}:=\eus P\setminus I\in \eus J_+(\eus P) .
\end{gather*}

\noindent
Both $\eus J_-(\eus P)$ and $\eus J_+(\eus P)$ are graded posets under inclusion, with the
rank function $I\mapsto \# I$. The rank-generating function of either of them is
\[
\eus M_\eus P(t)=\sum_{I\in \eus J_-(\eus P)} t^{\#I} .
\] 
It is also called the $\eus M$-{\it polynomial\/} of $\eus P$ in \cite{ja}. Clearly, $\eus M_\eus P(1)=
\# \eus J_-(\eus P)=\# \anp$.

\subsection{Gradings of simple Lie algebras and root systems} 
\label{subs:gradings}
Although we are primarily interested in combinatorics of posets related to $\BZ$-gradings of 
root systems, it is instructive and helpful 
to keep in mind that a $\BZ$-grading of $\Delta$ is an offspring of a $\BZ$-grading of the corresponding 
simple Lie algebra $\g$. This provides a broader perspective and adds some geometric flavour and 
intuition to one's considerations.
(We refer to \cite[Ch.\,3,\,\S\,3]{t41} for generalities on gradings of semisimple Lie algebras.)

Let $\g=\ut^-\oplus\te\oplus\ut$ be a fixed triangular decomposition, where $\te$ is a Cartan
subalgebra of $\g$. The associated root system $\Delta(\g,\te)$ is $\Delta$, and $V=\te^*_{\BR}$ is the 
$\BR$-span of $\Delta$ in $\te^*$. If $\g_\gamma$ is the root space for $\gamma\in\Delta$, then
$\ut =\bigoplus_{\gamma\in\Delta^+} \g_\gamma$. Write $s_\gamma$ for the 
reflection in $W$ with respect to $\gamma\in \Delta$. Let $\theta$ be the {\it highest root\/} in $\Delta^+$.
Recall that $\hot(\theta)=h-1$, where $h$ is the {\it Coxeter number\/} of $\Delta$.

Let $\g=\bigoplus_{i\in\BZ}\g(i)$ be a $\BZ$-grading. Since any derivation of $\g$ 
is inner,  we have $\g(i)=\{x\in\g \mid [\tilde h,x]=ix\}$ for a unique (semisimple) element $\tilde h\in \g(0)$. 
The element $\tilde h$ is said to be {\it defining\/} for the grading in question. 
Here $\g(0)$ is the centraliser of $\tilde h$, hence a reductive Lie algebra.
Without loss of generality, one may assume that $\tilde h\in\te$ and $\ap(\tilde h)\ge 0$ for all $\ap\in\Pi$.
Then $\te\subset\g(0)$, $\g(0)=(\g(0)\cap\ut^-)\oplus\te \oplus (\g(0)\cap\ut )$
is a triangular decomposition of $\g(0)$, and 
\[   \ut =(\g(0)\cap\ut )\oplus \g(1)\oplus\g(2)\oplus \dots . 
\]
If $\Delta(i)=\{\gamma\in \Delta\mid \gamma(\tilde h)=i\}$, then $\Delta(i)$ is the set of roots of $\g(i)$,  
and $\Delta=\bigsqcup_{i\in\BZ} \Delta(i)$ is a compatible $\BZ$-grading of $\Delta$ in the sense of Introduction, i.e., Eq.~\eqref{eq:comptible-gr} holds.
We also have $\Pi=\bigsqcup_{i\ge 0} \Pi(i)$, where  $\Pi(i)=\{\ap\in\Pi\mid \ap(\tilde h)=i\}$, and $\Pi(0)$ is the set of simple roots in $\Delta(0)^+=\Delta(0)\cap \Delta^+$.

Each $\g(i)$ is a $\g(0)$-module, and therefore $\Delta(i)$ has a natural poset structure as the set of 
weights of a $\g(0)$-module. In case of compatible gradings, this {\it weight poset\/} structure on 
$\Delta(i)$ coincides with the restriction of `$\curle$' to $\Delta(i)$, see \cite[Remark\,2.9]{ja}. 
More precisely, if $\gamma,\gamma'\in \Delta(i)$, 
then $\gamma$ covers $\gamma'$ if and only if $\gamma-\gamma'\in\Pi(0)$. Therefore, $\gamma'\curle
\gamma$ if and only if $\gamma-\gamma'$ is a nonnegative integer linear combination of $\Pi(0)$. 

Set $\be(0)^-=(\g(0)\cap\ut^-)\oplus\te$ and $\be(0)=\te \oplus (\g(0)\cap\ut)$. These are two 
opposite Borel subalgebras of $\g(0)$. A link between combinatorics and geometry is provided by the 
following simple observation, which we do not pursue in this article. 

\begin{prop}
There is a bijection between the lower (resp. upper) ideals of $(\Delta(1),\curle)$ and the $\be^-(0)$-stable
(resp. $\be(0)$-stable) subspaces of\/ $\g(1)$.
\end{prop}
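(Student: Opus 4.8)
The plan is to set up the bijection explicitly and check that each direction lands in the claimed set. First I would observe that since $\g(1)$ is a $\g(0)$-module, it is in particular a module over the Borel subalgebra $\be^-(0)=(\g(0)\cap\ut^-)\oplus\te$, and the weight-space decomposition is $\g(1)=\bigoplus_{\gamma\in\Delta(1)}\g_\gamma$ (all weight spaces one-dimensional, since these are root spaces of $\g$). A subspace $U\subseteq\g(1)$ that is stable under $\te$ is automatically a sum of weight spaces, $U=\bigoplus_{\gamma\in S}\g_\gamma$ for a unique $S\subseteq\Delta(1)$; this reduces the problem to a combinatorial condition on $S$. So the map in one direction is $U\mapsto S_U:=\{\gamma\in\Delta(1)\mid \g_\gamma\subseteq U\}$, and in the other direction $S\mapsto \g_S:=\bigoplus_{\gamma\in S}\g_\gamma$. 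These are clearly mutually inverse on the set of $\te$-stable subspaces, so it remains only to match $\be^-(0)$-stability of $U$ with $S_U$ being a lower ideal.

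Next I would translate the $\be^-(0)$-action into the poset structure. The nontrivial part of $\be^-(0)$ beyond $\te$ is $\g(0)\cap\ut^-=\bigoplus_{\beta\in\Delta(0)^+}\g_{-\beta}$, and for $\beta\in\Delta(0)^+$, $\gamma\in\Delta(1)$ one has $[\g_{-\beta},\g_\gamma]\subseteq\g_{\gamma-\beta}$, which is nonzero precisely when $\gamma-\beta\in\Delta(1)$ (it cannot land in $\Delta(i)$ for $i\neq 1$ by the grading axiom, and it is nonzero exactly when $\gamma-\beta$ is a root). Thus a $\te$-stable $U=\g_S$ is $\be^-(0)$-stable if and only if: whenever $\gamma\in S$ and $\beta\in\Delta(0)^+$ with $\gamma-\beta\in\Delta(1)$, then $\gamma-\beta\in S$. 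I then need to see this is the same as $S$ being a lower ideal of $(\Delta(1),\curle)$. By the description recalled in the excerpt, $\gamma'\curle\gamma$ in $\Delta(1)$ iff $\gamma-\gamma'$ is a nonnegative integer combination of $\Pi(0)$, equivalently (since everything stays inside $\Delta(1)$) there is a chain $\gamma=\gamma_0,\gamma_1,\dots,\gamma_k=\gamma'$ with each $\gamma_{j}-\gamma_{j+1}\in\Pi(0)\subseteq\Delta(0)^+$. Iterating the $\be^-(0)$-stability condition along such a chain shows a $\be^-(0)$-stable $S$ is a lower ideal; conversely, if $S$ is a lower ideal and $\gamma\in S$, $\beta\in\Delta(0)^+$, $\gamma-\beta\in\Delta(1)$, then $\gamma-\beta\curle\gamma$, hence $\gamma-\beta\in S$. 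This gives the lower-ideal bijection, and the upper-ideal statement follows by the identical argument with $\be(0)=\te\oplus(\g(0)\cap\ut)$ and $+\beta$ in place of $-\beta$ (alternatively, by taking orthogonal complements / the complementary-set bijection $I\mapsto\Delta(1)\setminus I$ recalled in Section~\ref{sec:general}).

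The one point requiring a little care — the main (mild) obstacle — is the claim that a $\te$-stable subspace of $\g(1)$ is automatically a sum of root spaces. This is where one uses that $\te$ acts on $\g(1)$ with pairwise distinct characters on distinct root spaces; distinctness holds because $\Delta(1)\subseteq\Delta$ and roots are distinct as characters of $\te$. Hence the $\te$-module $\g(1)$ is multiplicity-free with one-dimensional isotypic components, and any submodule is a partial sum. Once this is in hand, everything else is the routine bracket computation above, and I would present the proof in roughly that order: (1) $\te$-stable $\Leftrightarrow$ sum of root spaces; (2) compute $[\g_{-\beta},\g_\gamma]$ and reduce $\be^-(0)$-stability to the combinatorial closure condition; (3) identify that condition with ``lower ideal'' via chains in $\Pi(0)$; (4) note the dual statement for $\be(0)$ and upper ideals.
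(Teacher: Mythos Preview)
Your argument is correct and follows exactly the approach the paper has in mind: the paper's own proof simply states that $\ce_I=\bigoplus_{\gamma\in I}\g_\gamma$ is the corresponding stable subspace and leaves the details to the reader, and what you have written is precisely those details. There is nothing to add.
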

\begin{proof}
If $I\in\jdod$ or $I\in\jdodp$, then $\ce_I=\bigoplus_{\gamma\in I}\g_\gamma$ is the corresponding
$\be^-(0)$-stable or $\be(0)$-stable subspace of $\g(1)$. The details are left to the reader.
\end{proof}
  
\subsection{Standard $\BZ$-gradings}
\label{subs:g1-dostat}
By \cite[\S\,1.2, \S\,2.1]{vi76}, if one is interested in possible $\g(0)$-modules $\g(i)$,  and hence in possible posets $\Delta(i)$, 
then it suffices to consider the $\g(0)$-modules $\g(1)$ for all semisimple $\g$. 
(For $i >1$,  the problem is reduced to considering the induced $\BZ$-grading of
a certain semisimple subalgebra of $\g$.)
For this reason, it suffices to consider defining elements $\tilde h\in\te$ such that $\ap(\tilde h)\in\{0,1\}$, i.e., 
$\Pi=\Pi(0)\sqcup\Pi(1)$.
The corresponding $\BZ$-gradings (of both $\g$ and $\Delta$) are said to be {\it standard}.
More precisely, if $\#\Pi(1)=k$, then we call it a $k$-{\it standard\/} grading.
A standard $\BZ$-grading can be represented by the Dynkin diagram of $\g$,
where the vertices in $\Pi(1)$ are coloured.
If $\Pi(1)=\{\ap_{i_1},\dots,\ap_{i_k}\}$, then the $\ap_{i_j}$'s are precisely 
the lowest weights of the simple $\g(0)$-modules in $\g(1)$, the centre of $\g(0)$ is $k$-dimensional, 
and $\g(1)$ is a direct sum of $k$ simple $\g(0)$-modules. 
In this case, the poset  $\Delta(1)$ is the disjoint union of $k$ subposets corresponding to the simple summands of $\g(1)$. Therefore, all enumerative problems for $\Delta(1)$ reduce to $1$-standard
gradings.

The weight posets $\Delta(i)$, $i>0$, can be visualised as follows. Let 
$\eus H(\Delta^+)$ be the {\it Hasse diagram\/} of $(\Delta^+, \curle)$. If $\gamma'-\gamma=\ap\in\Pi$, 
then the edge connecting $\gamma$ and $\gamma'$ in $\eus H(\Delta^+)$ is said to be {\it of type\/} $\ap$. 
Given a standard $\BZ$-grading of $\g$, let us remove from $\eus H(\Delta^+)$ all the edges of types 
from $\Pi(1)$. This yields a disconnected graph. Each connected component of it is the Hasse 
diagram of either the set of positive roots of a simple factor of $\g(0)$ (if it contains roots from $\Pi(0)$) 
or the weight poset of a simple $\g(0)$-module in some $\g(i)$, $i>0$. The set of weights of a simple $\g(0)$-module in some $\g(i)$, $i\ge 1$, is precisely the set of roots $\gamma$ with fixed values
$[\gamma : \ap]$ for all $\ap\in\Pi(1)$, see e.g.~\cite[3.5]{t41}. 

\subsection{Special classes of $\BZ$-gradings}  
\label{subs:classes}

In \cite[Sect.\,3, 4]{ja}, we considered in details the following two classes of $\BZ$-gradings of $\g$ and hence of $\Delta$:

{\it\bfseries The abelian case}: $\g=\g(-1)\bigoplus\g(0)\bigoplus\g(1)$. 
\\ Here $\g(0)\bigoplus\g(1)$ 
is a parabolic subalgebra and $\g(1)$ is its abelian nilradical.  In this case  $\g(1)$ is a simple 
$\g(0)$-module and therefore such a grading is $1$-standard. If 
$\Pi(1)=\{\tilde\ap\}$, then upon the identification of $\te_\BR$ and 
$\te^*_\BR$, the defining element $\tilde h$ appears to be the minuscule fundamental 
weight $\vp_{\tilde\ap}^\vee$ of the dual root system $\Delta^\vee$. 
As is well known, the admissible simple roots $\tilde\ap$ are characterised by 
the property that $[\theta:\tilde\ap]=1$~\cite[Ch.\,VIII,\ \S\,7, n$^0$3]{bour7-8}.

\vskip1.2ex \indent
{\it\bfseries The extra-special case}: 
$\g=\g(-2)\bigoplus\g(-1)\bigoplus\g(0)\bigoplus\g(1)\bigoplus\g(2)$ and $\dim\g(2)=1$. 
\\    
Any simple Lie algebra has a unique, up to conjugation, $\BZ$-grading of this form, and without loss 
of generality, we may assume that $\Delta(2)=\{\theta\}$. Upon the 
identification of $\te_\BR$ and $\te^*_\BR$, the defining
element $\tilde h$ is recognised as the coroot $\theta^\vee$.
That is, $\Delta(i)=\{\gamma\in\Delta\mid (\gamma,\theta^\vee)=i\}$ and $W(0)$ is the stabiliser of 
$\theta$ (or $\theta^\vee$) in $W$.
Since  here
$\Pi(1)=\{\ap\in\Pi\mid (\gamma,\theta^\vee)\ne 0\}$, we see that 
$\g(1)$ is a simple $\g(0)$-module if and only if $\theta$ is a multiple of a fundamental weight, i.e.,
$\g$ is not of type $\GR{A}{n}$.

The following simple lemma is one of our main tools for inductive arguments in subsequent sections.
\begin{lm}    \label{lem:tri-kornya}
Suppose that the roots $\mu,\nu_1,\nu_2$ have the property that $\nu_1+\nu_2\in\Delta$ and 
$\mu+\nu_1+\nu_2\in \Delta$. Then $\mu+\nu_1$ or $\mu+\nu_2$ is also a root.
\end{lm}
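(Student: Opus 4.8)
The plan is to argue by contradiction via inner products. Suppose $\mu+\nu_1\notin\Delta$ and $\mu+\nu_2\notin\Delta$, and put $\gamma:=\mu+\nu_1+\nu_2\in\Delta$ and $\beta:=\nu_1+\nu_2\in\Delta$. The only tool I would use is the standard fact that for $\alpha,\delta\in\Delta$ with $\alpha+\delta\notin\Delta\cup\{0\}$ one has $(\alpha,\delta)\ge0$; equivalently (apply this to the pair $-\alpha$, $\delta$), if $\delta-\alpha\notin\Delta\cup\{0\}$ then $(\delta,\alpha)\le0$.

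First I would apply the dichotomy with $(\alpha,\delta)=(\nu_1,\mu)$ and with $(\nu_2,\mu)$: since $\mu+\nu_i\notin\Delta$ (and $\mu+\nu_i\ne0$, see below) we get $(\mu,\nu_1)\ge0$ and $(\mu,\nu_2)\ge0$, hence $(\mu,\beta)\ge0$ and therefore
\[
  (\gamma,\beta)=(\mu,\beta)+(\beta,\beta)\ge(\beta,\beta)>0 .
\]
Then I would use the second form with $\delta=\gamma$ and $\alpha=\nu_1,\nu_2$: as $\gamma-\nu_1=\mu+\nu_2\notin\Delta$ and $\gamma-\nu_2=\mu+\nu_1\notin\Delta$ (both nonzero), we get $(\gamma,\nu_1)\le0$ and $(\gamma,\nu_2)\le0$, so $(\gamma,\beta)=(\gamma,\nu_1)+(\gamma,\nu_2)\le0$. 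This contradicts the displayed inequality, which proves the lemma.

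The one point that needs care is the applicability of the dichotomy: in each of the four invocations the relevant vector ``$\alpha+\delta$'' equals $\mu+\nu_1$ or $\mu+\nu_2$, so one only needs $\mu+\nu_1\ne0$ and $\mu+\nu_2\ne0$. In every place where this lemma is invoked, $\mu,\nu_1,\nu_2$ are positive roots, whence $\mu+\nu_i$ is a sum of positive roots and cannot vanish; so either the statement should be read with $\mu,\nu_1,\nu_2\in\Delta^+$ or the harmless extra assumptions $\mu+\nu_1\ne0\ne\mu+\nu_2$ should be recorded. (Some such restriction is genuinely needed: for $\Delta$ of type $\GR{A}{2}$ the roots $\mu=-\ap_2$, $\nu_1=\ap_1$, $\nu_2=\ap_2$ satisfy all stated hypotheses while $\mu+\nu_1=\ap_1-\ap_2\notin\Delta$ and $\mu+\nu_2=0$.)

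Finally, I would point out why the weaker, better-known statement --- that three roots whose sum is a root always have some pair with root sum --- does not suffice here: the pair it produces could be $\{\nu_1,\nu_2\}$, which is a root by hypothesis, so one really must see that the root sum can be arranged to involve $\mu$, and the two-sided estimate of $(\gamma,\beta)$ above is exactly what achieves this.
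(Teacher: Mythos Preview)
Your proof is correct and is essentially the contrapositive packaging of the paper's own argument: the paper splits into cases on the sign of $(\gamma,\beta)$ and in each case uses exactly the same two inner-product facts you invoke, whereas you derive both $(\gamma,\beta)>0$ and $(\gamma,\beta)\le 0$ from the negation. Your remark on the degenerate case $\mu+\nu_i=0$ is well taken---the paper's proof tacitly relies on the same assumption, and, as you observe, every application of the lemma in the paper has $\mu,\nu_1,\nu_2\in\Delta^+$, so the issue never arises.
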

\begin{proof}
1) \ If $(\mu+\nu_1+\nu_2,\nu_1+\nu_2)>0$, then $(\mu+\nu_1+\nu_2,\nu_1)>0$ or
$(\mu+\nu_1+\nu_2,\nu_2)>0$. Hence $\mu+\nu_2$ or $\mu+\nu_1$ is a root.

2) \ If $(\mu+\nu_1+\nu_2,\nu_1+\nu_2)\le 0$, then $(\mu,\nu_1+\nu_2)<0$. 
Hence $(\mu,\nu_1)<0$ or $(\mu,\nu_2)<0$, i.e., again $\mu+\nu_1$ or $\mu+\nu_2$ is a root.
\end{proof}

\section{Elements of $W^0$ associated with the lower ideals in $\Delta(1)$}  
\label{sect:min-and-max-els}

\noindent
In this section, $\Delta=\bigsqcup_{i\in \BZ}\Delta(i)$ is a $\BZ$-grading and
$\Delta^+=\Delta(0)^+\cup\Delta({\ge}1)$. Recall that 
$I\in \jdod$ if and only if whenever $\gamma\in I$, $\mu\in\Delta(0)^+$, and 
$\gamma-\mu\in \Delta$, then $\gamma-\mu\in I$. Then $I^{\boldsymbol c}:=\Delta(1)\setminus I\in\jdodp$. That is, 
if $\gamma\in I^{\boldsymbol c}$, $\mu\in\Delta(0)^+$, and 
$\gamma+\mu\in \Delta$, then $\gamma+\mu\in I^{\boldsymbol c}$.

For any $I\subset \Delta^+$, we set $I^1=I$ and if $I^{k-1}\ne \varnothing$, then
$I^k=(I+I^{k-1})\cap \Delta$ for $k\ge 2$. Then
$\langle I\rangle:=\bigcup_{k\ge 1} I^k \subset \Delta^+$.

\begin{lm}   \label{lem:<I>closed}
 $\langle I\rangle$ is a closed subset of $\Delta^+$.
\end{lm}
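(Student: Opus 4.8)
The claim is that $\langle I\rangle=\bigcup_{k\ge 1} I^k$ is closed under addition inside $\Delta^+$. Recall that $I^1=I$ and $I^k=(I+I^{k-1})\cap\Delta$ for $k\ge 2$, so an element of $I^k$ is by definition a root that can be written as a sum of $k$ elements of $I$, where moreover \emph{every partial sum}, accumulated in the order $\gamma_k+(\gamma_{k-1}+(\cdots+(\gamma_2+\gamma_1)))$, is a root. What we must show is that if $\sigma\in I^k$ and $\rho\in I^m$ and $\sigma+\rho\in\Delta$, then $\sigma+\rho\in I^{k+m}$, i.e. $\sigma+\rho$ can be realised by such a nested sequence of roots using $k+m$ summands from $I$. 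The plan is to induct on $m$ (the length of the ``shorter'' word being appended), the base case $m=1$ being the heart of the matter.

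For the base case $m=1$: we have $\sigma\in I^k$, $\gamma\in I$, and $\sigma+\gamma\in\Delta$; we want $\sigma+\gamma\in I^{k+1}=(I+I^k)\cap\Delta$, which is immediate once we know $\sigma+\gamma$ is a root — so actually $m=1$ is trivial from the definition. The real work is the inductive step, where $\rho=\rho'+\gamma$ with $\rho'\in I^{m-1}$, $\gamma\in I$, and $\rho'+\gamma\in\Delta$. We then have the three roots $\mu:=\sigma$, $\nu_1:=\rho'$, $\nu_2:=\gamma$ with $\nu_1+\nu_2=\rho\in\Delta$ and $\mu+\nu_1+\nu_2=\sigma+\rho\in\Delta$, so Lemma~\ref{lem:tri-kornya} applies: either $\sigma+\rho'\in\Delta$ or $\sigma+\gamma\in\Delta$. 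In the first case, $\sigma+\rho'$ is a root, $\sigma\in I^k$, $\rho'\in I^{m-1}$, so by the induction hypothesis $\sigma+\rho'\in I^{k+m-1}$; then $(\sigma+\rho')+\gamma=\sigma+\rho$ is a root with $\sigma+\rho'\in I^{k+m-1}$ and $\gamma\in I$, hence $\sigma+\rho\in I^{k+m}$. In the second case, $\sigma+\gamma$ is a root, $\sigma\in I^k$, $\gamma\in I$, so $\sigma+\gamma\in I^{k+1}$; and $\rho'\in I^{m-1}$ with $(\sigma+\gamma)+\rho'=\sigma+\rho\in\Delta$, so by the induction hypothesis (applied with the roles $k+1$ and $m-1$, whose second argument is smaller) $\sigma+\rho\in I^{k+1+m-1}=I^{k+m}$.

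To make the induction clean I would actually phrase it as a single statement: \emph{for all $k,m\ge 1$, if $\sigma\in I^k$, $\rho\in I^m$, and $\sigma+\rho\in\Delta$, then $\sigma+\rho\in I^{k+m}\subset\langle I\rangle$}, and induct on $m$. One must be slightly careful that in the second branch above the induction hypothesis is invoked at $m-1<m$, so the induction is well-founded; the value of $k$ can grow, but that is harmless since the induction is only on the second index. The containment $\langle I\rangle\subset\Delta^+$ (as opposed to merely $\subset\Delta$) is not an issue: all elements of $I\subset\Delta(1)\subset\Delta^+$ are positive and a sum of positive roots that is a root is positive, so each $I^k\subset\Delta^+$.

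\textbf{Main obstacle.} There is no serious obstacle; the only subtlety is organising the bookkeeping so that the recursive ``splitting'' of a product of $k+m$ generators terminates, which is exactly what the induction on $m$ handles, with Lemma~\ref{lem:tri-kornya} supplying precisely the dichotomy needed at each step. I would expect the write-up to be four or five lines.
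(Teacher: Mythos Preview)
Your argument is correct and follows essentially the same route as the paper: split one summand using the recursive definition of $I^m$, apply Lemma~\ref{lem:tri-kornya} to the triple, and recurse. The only cosmetic difference is the induction scheme: the paper orders pairs $(k_1,k_2)$ lexicographically by $(k_1+k_2,\min(k_1,k_2))$ and splits the smaller summand, whereas you induct on $m$ alone (quantifying over all $k$), which is in fact slightly cleaner.
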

\begin{proof}
Suppose that $\gamma_i\in I^{k_i}$, $i=1,2$ and $\gamma_1+\gamma_2$ is a root. Our goal is to prove
that $\gamma_1+\gamma_2\in I^{k_1+k_2}$. Without loss 
of generality, we may assume that $k_1\le k_2$. Arguing by 
induction, we assume that the required property holds for all $(k'_1,k'_2)$ such that either $k'_1+k'_2<
k_1+k_2$ or $k'_1+k'_2=k_1+k_2$ and $k'_1<k_1$.

\textbullet \quad If $k_1=1$, then $\gamma_1+\gamma_2\in I^{k_1+k_2}$ by the very definition of 
$I^k$.

\textbullet \quad If $k_1>1$, then $\gamma_1=\gamma'_1+\gamma''_1$ with $\gamma'_1\in I$ and
$\gamma''_1\in I^{k_1-1}$. By Lemma~\ref{lem:tri-kornya}, we then have $\gamma'_1+\gamma_2\in\Delta$ or $\gamma''_1+\gamma_2\in\Delta$. Hence, by the induction assumption, 
$\gamma'_1+\gamma_2\in I^{k_2+1}$ or $\gamma''_1+\gamma_2\in I^{k_1+k_2-1}$; and in either case
we also conclude that $\gamma'_1+\gamma''_1+\gamma_2\in I^{k_1+k_2}$.
\end{proof}

Now, let us turn to the case in which $I\subset\Delta(1)$.
Then $I^k\subset \Delta(k)$ for all $k\ge 1$. Consequently, $\langle I\rangle\subset \Delta({\ge}1)$.

\begin{prop}    \label{prop:0-ideal}
If $I\in \jdod$, then $I^k\in \eus J_-(\Delta(k)\!)$ 
for any $k\ge 1$. Likewise, if $I\in \jdodp$, then $I^k\in \eus J_+(\Delta(k)\!)$ for any $k\ge 1$.
\end{prop}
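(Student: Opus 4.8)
The plan is to prove the statement about $I \in \jdod$ by induction on $k$, and then deduce the statement about $I \in \jdodp$ either by a symmetric argument or by passing to the complement. The base case $k=1$ is the hypothesis. For the inductive step, I assume $I^{k-1} \in \eus J_-(\Delta(k{-}1)\!)$ and want to show $I^k = (I + I^{k-1}) \cap \Delta$ is a lower ideal of $\Delta(k)$. First I would note that $I^k \subset \Delta(k)$ follows from the grading property together with $I \subset \Delta(1)$ and $I^{k-1} \subset \Delta(k-1)$, as already observed right before the proposition. So the real content is: if $\gamma \in I^k$, $\mu \in \Delta(0)^+$, and $\gamma - \mu \in \Delta$, then $\gamma - \mu \in I^k$. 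Note $\gamma - \mu \in \Delta(k)$ automatically by the grading.

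Here is the core of the argument. Write $\gamma = \beta + \delta$ with $\beta \in I$ and $\delta \in I^{k-1}$. We are given $\gamma - \mu = \beta + \delta - \mu$ is a root. Apply Lemma~\ref{lem:tri-kornya} with the triple $\mu' = -\mu$, $\nu_1 = \beta$, $\nu_2 = \delta$: we have $\nu_1 + \nu_2 = \gamma \in \Delta$ and $\mu' + \nu_1 + \nu_2 = \gamma - \mu \in \Delta$, so the lemma yields that $\beta - \mu \in \Delta$ or $\delta - \mu \in \Delta$. In the first case, since $I \in \jdod$ and $\beta \in I$ and $\mu \in \Delta(0)^+$, we get $\beta - \mu \in I$; then $\gamma - \mu = (\beta - \mu) + \delta$ with $\beta - \mu \in I$ and $\delta \in I^{k-1}$, so $\gamma - \mu \in (I + I^{k-1}) \cap \Delta = I^k$. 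In the second case, since $I^{k-1} \in \eus J_-(\Delta(k{-}1)\!)$ by the induction hypothesis and $\delta \in I^{k-1}$ and $\mu \in \Delta(0)^+$, we get $\delta - \mu \in I^{k-1}$; then $\gamma - \mu = \beta + (\delta - \mu) \in I^k$ likewise. Either way $\gamma - \mu \in I^k$, completing the induction.

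One small technical point to address carefully: when I write $\gamma = \beta + \delta$ with $\beta \in I$, $\delta \in I^{k-1}$, I should make sure $\beta - \mu$ (resp.\ $\delta - \mu$) being a root is enough to apply the ideal property — it is, because the defining condition of a lower ideal in $\Delta(1)$ (resp.\ $\Delta(k-1)$) quoted at the start of Section~\ref{sect:min-and-max-els} is exactly ``$\gamma \in I$, $\mu \in \Delta(0)^+$, $\gamma - \mu \in \Delta$ $\Rightarrow$ $\gamma - \mu \in I$,'' with no extra positivity check needed (positivity is automatic since subtracting $\mu \in \Delta(0)^+$ from an element of $\Delta(j)$, $j \ge 1$, stays in $\Delta(j) \subset \Delta^+$). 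For the $\jdodp$ statement I would run the mirror-image argument: given $\gamma \in I^k$, $\mu \in \Delta(0)^+$, $\gamma + \mu \in \Delta$, write $\gamma = \beta + \delta$ and apply Lemma~\ref{lem:tri-kornya} to the triple $\mu, \beta, \delta$ to get $\beta + \mu \in \Delta$ or $\delta + \mu \in \Delta$, then use that $I$ is an upper ideal or $I^{k-1} \in \eus J_+(\Delta(k{-}1)\!)$ respectively.

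I do not anticipate a serious obstacle here; Lemma~\ref{lem:tri-kornya} is precisely tailored to make the inductive step go through, and Lemma~\ref{lem:<I>closed} has already done the harder bookkeeping of showing $\langle I \rangle$ is closed. The only thing requiring a moment's care is keeping straight which of the two summands $\beta, \delta$ absorbs $\mu$ in each branch of the case split, and confirming the membership $(I + I^{k-1}) \cap \Delta$ in each case rather than merely that the element is a root. Everything else is routine.
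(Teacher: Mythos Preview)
Your argument is correct and is precisely the intended one: the paper's proof is simply ``Argue by induction on $k$ and use Lemma~\ref{lem:tri-kornya},'' and you have spelled out exactly that induction, applying the lemma to the triple $(-\mu,\beta,\delta)$ in the lower-ideal case and to $(\mu,\beta,\delta)$ in the upper-ideal case.
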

\begin{proof}
Argue by induction on $k$ and use Lemma~\ref{lem:tri-kornya}.
\end{proof}

\begin{thm}  \label{thm:biconvex-}
If $I\in \jdod$, then $\langle I\rangle$ is a bi-convex subset of $\Delta^+$.
\end{thm}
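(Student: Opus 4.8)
We already know from Lemma~\ref{lem:<I>closed} that $\langle I\rangle$ is closed under addition, so to show bi-convexity it remains to prove that $\Delta^+\setminus\langle I\rangle$ is also closed under addition. The plan is to argue by contradiction: suppose $\gamma_1,\gamma_2\in\Delta^+\setminus\langle I\rangle$ but $\gamma:=\gamma_1+\gamma_2\in\langle I\rangle$, and derive a contradiction. Writing $\gamma_i\in\Delta(k_i)$, so that $\gamma\in\Delta(k_1+k_2)\cap\langle I\rangle=I^{k_1+k_2}$, the idea is to ``peel off'' a root of $I\subset\Delta(1)$ from the decomposition $\gamma=\gamma_1+\gamma_2$ in a way that stays inside $\langle I\rangle$, then induct.

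The key step is a descent argument on $\mathrm{ht}(\gamma)$ (equivalently on $k_1+k_2$). Since $\gamma\in I^{k_1+k_2}$ with $k_1+k_2\ge 2$, we can write $\gamma=\beta+\delta$ with $\beta\in I\subset\Delta(1)$ and $\delta\in I^{k_1+k_2-1}\subset\Delta({\ge}1)$. Now apply Lemma~\ref{lem:tri-kornya} with the triple $\mu=\delta$, $\nu_1=-\gamma_2$, $\nu_2=\gamma$ — or, more carefully, work with the configuration of the four roots $\gamma_1,\gamma_2,\beta,\delta$ satisfying $\gamma_1+\gamma_2=\beta+\delta$. One of $\beta-\gamma_1$ or $\beta-\gamma_2$ should be a root (this is exactly a Lemma~\ref{lem:tri-kornya}-type statement, since $\gamma_1+\gamma_2-\beta=\delta$ is a root and $\gamma_1+\gamma_2$ is a root). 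Say $\beta-\gamma_1\in\Delta$; since $\beta\in\Delta(1)$ and $\gamma_1\in\Delta(k_1)$ we get $\beta-\gamma_1\in\Delta(1-k_1)$, and then $\gamma_2=\delta-(\beta-\gamma_1)$, i.e. $\delta=\gamma_2+(\beta-\gamma_1)$. One then analyzes the sign: if $\beta-\gamma_1\in\Delta^+$ then $\gamma_1\curle\beta$ in $\Delta(1)$-order... but $\gamma_1$ need not lie in $\Delta(1)$, so instead one uses that $\beta-\gamma_1\in\Delta^+\cap\Delta({\le}0)=\Delta(0)^+\cup\{0\}$ forces, via $I\in\jdod$ being a lower ideal, that $\gamma_1=\beta-(\beta-\gamma_1)$ would be below $\beta$; and if $\gamma_1-\beta\in\Delta^+$ then $\delta=\gamma_2+(\beta-\gamma_1)$ exhibits $\delta$ as a sum in which we can recurse on the smaller height. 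In every branch the contradiction is reached by reducing to a pair with strictly smaller total height and invoking the inductive hypothesis that bi-convexity (i.e. the complement being closed) holds below $\mathrm{ht}(\gamma)$, the base case $\mathrm{ht}(\gamma)$ minimal being where $\gamma\in I=I^1$ forces one of the $\gamma_i$ into $\Delta(0)^+$, contradicting lower-ideal-ness of $I$ together with $\gamma_1,\gamma_2\notin\langle I\rangle$.

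The main obstacle I anticipate is the bookkeeping of signs and $\BZ$-grading levels when peeling off $\beta\in I$: the root $\beta-\gamma_i$ can a priori be positive or negative, and $\gamma_i$ lives in $\Delta({\ge}1)$ rather than in $\Delta(1)$, so one cannot directly use the lower-ideal property of $I$ on $\gamma_i$. The resolution is to carry the induction not just on total height but to set things up so that at each step one genuinely lands back inside $\langle I\rangle$ for one of the two summands — using Proposition~\ref{prop:0-ideal} (that $I^{k}$ is a lower ideal of $\Delta(k)$) to control which summand is absorbed. Concretely, I would phrase the inductive claim as: \emph{for all $m$, if $\gamma_1+\gamma_2\in\langle I\rangle$ with $\mathrm{ht}(\gamma_1)+\mathrm{ht}(\gamma_2)\le m$, then $\gamma_1\in\langle I\rangle$ or $\gamma_2\in\langle I\rangle$}, and run the Lemma~\ref{lem:tri-kornya} reduction above, each time replacing $(\gamma_1,\gamma_2)$ by a pair of strictly smaller total height (e.g. $(\gamma_2,\beta-\gamma_1)$ or $(\gamma_1-\beta,\gamma_2)$) while keeping the sum in $\langle I\rangle$, until one summand is forced into $I$ itself — at which point membership in $\langle I\rangle$ of the corresponding original root follows by closedness (Lemma~\ref{lem:<I>closed}). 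This yields that $\Delta^+\setminus\langle I\rangle$ is closed, and combined with Lemma~\ref{lem:<I>closed} completes the proof that $\langle I\rangle$ is bi-convex.
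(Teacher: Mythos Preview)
Your approach is essentially the paper's: both argue by contradiction, induct on the grading level $k_1+k_2$ of the sum, write $\gamma_1+\gamma_2=\beta+\delta$ with $\beta\in I$, use a root-system argument (the paper does the inner-product step directly rather than invoking Lemma~\ref{lem:tri-kornya}, but it is the same computation) to force some $\gamma_i-\beta$ to be a root, and then case-split on its sign. The paper eliminates the bookkeeping you anticipate by first disposing of the case where one summand lies in $\Delta(0)^+$ --- immediate from Proposition~\ref{prop:0-ideal}, since each $\Delta(k)\setminus I^k$ is an upper ideal --- so that the induction only ever sees pairs with $k_1,k_2\ge 1$; note also that your displayed relation has a sign slip and should read $\delta=\gamma_2+(\gamma_1-\beta)$.
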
   
\begin{proof}
By Lemma~\ref{lem:<I>closed},  $\langle I\rangle$ is closed. Set 
$(I^k)^{\boldsymbol c}=\Delta(k)\setminus I^k$ for $k\ge 1$. 
Then 
\[
  \ov{ \langle I\rangle}:=\Delta^+\setminus \langle I\rangle=\Delta(0)^+\cup I^{\boldsymbol c}\cup (I^2)^{\boldsymbol c} \cup \dots ,
\]
and our goal is to prove that $\ov{ \langle I\rangle}$ is closed, too. 
Assuming that this is not the case, one can find 
$\mu',\mu''\in \ov{ \langle I\rangle}$ such that $\mu'+\mu''\in  \langle I\rangle$.
Since $\Delta(0)^+$ is closed and each $(I^k)^{\boldsymbol c}$ is an upper ideal 
(use Proposition~\ref{prop:0-ideal}!), one has to only 
consider the case in which neither $\mu'$ nor $\mu''$ belong to $\Delta(0)^+$.
Specifically, assume that $\mu'\in (I^i)^{\boldsymbol c}$ and $\mu''\in (I^j)^{\boldsymbol c}$ with $i,j\ge 1$, but $\mu'+\mu''\in I^{i+j}$. 
Arguing by induction, we may assume that $i+j$ is the smallest integer with such property.
By the recursive definition of $\langle I\rangle$, one has
\[
    \mu'+\mu''=\gamma_1+\gamma_{i+j-1}\in I^{i+j} , 
\]
where $\gamma_k\in I^k$. Since 
$(\mu'+\mu'',\gamma_1+\gamma_{i+j-1})>0$, we may assume that, say,  $(\mu',\gamma_1)>0$ and 
hence $\nu:=\mu'-\gamma_1=\gamma_{i+j-1}-\mu'' \in \Delta(i-1)$. Now, there are two possibilities for $i$.

{\it\bfseries (a)} \ $i=1$. Then $\nu\in\Delta(0)$. If $\nu\in\Delta(0)^+$, then
$\mu'=\gamma_1+\nu\in I$. If $\nu\in\Delta(0)^-$, then $\mu''=\gamma_j-\nu \in I^j$. In either case, this contradicts the assumption on $\mu',\mu''$.

{\it\bfseries (b)} \ $i>1$. Then $\nu=\mu'-\gamma_1\in \Delta(i-1)\subset \Delta^+$. If $\nu\in I^{i-1}$, then
$\mu'\in I^i$, a contradiction. If $\nu\in (I^{i-1})^{\boldsymbol c}$, then 
\[
    (\mu'-\gamma_1)+\mu''=\gamma_{i+j-1} \in I^{i+j-1},
\]
which contradicts the minimality of $i+j$.

Thus, $\ov{ \langle I\rangle}$ is closed, and we are done. 
\end{proof}

\begin{thm}  \label{thm:biconvex+}
If $I\in \jdod$, then $\Delta({\ge}1)\setminus \langle I^{\boldsymbol c}\rangle$
is a bi-convex subset of $\Delta^+$.
\end{thm}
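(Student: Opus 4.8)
The plan is to mimic the structure of the proof of Theorem~\ref{thm:biconvex-}, but now applied to the upper ideal $I^{\boldsymbol c}=\Delta(1)\setminus I\in\jdodp$. Set $J=I^{\boldsymbol c}$ and $M=\langle J\rangle=\bigcup_{k\ge 1}J^k$. By Proposition~\ref{prop:0-ideal}, each $J^k$ is an \emph{upper} ideal of $\Delta(k)$, so its complement $(J^k)^{\boldsymbol c}=\Delta(k)\setminus J^k=I^k$ (with $I^1=I$) is a lower ideal of $\Delta(k)$. First I would record that $M$ is closed in $\Delta^+$: this is exactly Lemma~\ref{lem:<I>closed} applied to $J$. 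Hence $\Delta({\ge}1)\setminus M=\bigsqcup_{k\ge1}I^k$, and what remains is to show that this set, call it $K$, is also closed under addition whenever the sum lands inside $\Delta({\ge}1)$ — that suffices for bi-convexity of $K$ as a subset of $\Delta^+$, since its complement in $\Delta^+$ is $\Delta(0)^+\sqcup M$, which is closed ($\Delta(0)^+$ is closed, $M$ is closed, and a sum of a root in $\Delta(0)^+$ with one in $M$ lands in $M$ because each $J^k$ is an upper ideal of $\Delta(k)$).

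So the heart is: if $\mu'\in I^i$, $\mu''\in I^j$ ($i,j\ge1$) and $\mu'+\mu''\in\Delta$, then $\mu'+\mu''\in I^{i+j}$. Suppose not; then $\mu'+\mu''\in J^{i+j}$, and by induction we may take $i+j$ minimal with this failure. Using the recursive definition of $M=\langle J\rangle$, write $\mu'+\mu''=\delta_1+\delta_{i+j-1}$ with $\delta_1\in J$, $\delta_{i+j-1}\in J^{i+j-1}$. Since $(\mu'+\mu'',\delta_1+\delta_{i+j-1})>0$ we may assume, say, $(\mu',\delta_1)>0$, so $\nu:=\mu'-\delta_1=\delta_{i+j-1}-\mu''$ is a root lying in $\Delta(i-1)$. (Here I should be careful about which of the four inner-product pairings is positive; by symmetry the two remaining cases — $(\mu'',\delta_1)>0$ etc. — are handled identically, swapping the roles of $\mu'$ and $\mu''$ and of $i$ and $j$.) Then split on $i$: if $i=1$, $\nu\in\Delta(0)$; if $\nu\in\Delta(0)^+$ then $\mu'=\delta_1+\nu$ would put $\delta_1$'s successor... more precisely $\delta_1\in J$ and $\mu'=\delta_1+\nu$ with $\nu\in\Delta(0)^+$ forces $\mu'\in J$ (as $J$ is an upper ideal in $\Delta(1)$), contradicting $\mu'\in I$; if $\nu\in\Delta(0)^-$ then $\mu''=\delta_{i+j-1}+(-\nu)\in\Delta(i+j-1)=\Delta(j)$ with $-\nu\in\Delta(0)^+$, and since $\delta_{i+j-1}\in J^{i+j-1}=J^j$ which is an upper ideal of $\Delta(j)$, we get $\mu''\in J^j$, contradicting $\mu''\in I^j$.

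For the case $i>1$: now $\nu=\mu'-\delta_1\in\Delta(i-1)\subset\Delta^+$. If $\nu\in I^{i-1}$, then $\mu'=\delta_1+\nu$ is a sum of $\delta_1\in J$ and $\nu\in I^{i-1}$ lying in $\Delta(i)$, and since $J^i$ is an upper ideal of $\Delta(i)$ and $\delta_1\in J\subset J^1$... — actually the cleanest argument is: $\mu'\in I^i$ by assumption, yet $\mu'=\delta_1+\nu$ with $\delta_1\in J^1$, $\nu\in\Delta(i-1)$; if $\nu\in J^{i-1}$ then $\mu'\in J^i$, contradiction, so the relevant subcase is $\nu\in I^{i-1}$, and then I rewrite $(\mu'-\delta_1)+\mu''=\nu+\mu''=\delta_{i+j-1}\in J^{i+j-1}$, where $\nu\in I^{i-1}$, $\mu''\in I^j$, and $(i-1)+j<i+j$ — contradicting minimality. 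This closes the induction, so $K=\Delta({\ge}1)\setminus\langle I^{\boldsymbol c}\rangle$ is bi-convex. \emph{The main obstacle} I anticipate is bookkeeping: making sure the induction is set up on the right parameter (smallest $i+j$ witnessing failure) and that in the case analysis the membership $\nu\in J^{i-1}$ versus $\nu\in I^{i-1}$ is dichotomous and each horn genuinely yields a contradiction — the asymmetry between "upper ideal of $\Delta(k)$" for the $J^k$'s and "lower ideal" for the complements must be tracked consistently, which is where a sign error is most likely to creep in.
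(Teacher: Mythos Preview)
Your proof is correct and follows essentially the same approach as the paper's own proof, which simply instructs the reader to repeat the inductive argument of Theorem~\ref{thm:biconvex-} \emph{mutatis mutandis}, using that each $(I^{\boldsymbol c})^k$ is an upper ideal of $\Delta(k)$ and each complement $\Delta(k)\setminus (I^{\boldsymbol c})^k$ is a lower ideal. One warning on notation: you write $(J^k)^{\boldsymbol c}=\Delta(k)\setminus J^k=I^k$, but the symbol $I^k$ already has a meaning in the paper (the iterated sumset $(I+I^{k-1})\cap\Delta$), and these two sets are \emph{not} equal for $k\ge 2$ in general---indeed $\langle I\rangle\cap\Delta(k)=I^k$ while $\bigl(\Delta({\ge}1)\setminus\langle I^{\boldsymbol c}\rangle\bigr)\cap\Delta(k)=\Delta(k)\setminus(I^{\boldsymbol c})^k$, and the gap between them is precisely what separates $w_{I,\min}$ from $w_{I,\max}$---so you should rename your complement sets (e.g.\ $L_k$) to avoid a genuine clash.
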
   
\begin{proof}
All the necessary ideas are already contained in the previous proof. 
\par
1. The complement in $\Delta^+$
of the indicated subset is $\langle I^{\boldsymbol c}\rangle\cup \Delta(0)^+$. Here
$\langle I^{\boldsymbol c}\rangle=\bigcup_{k\ge 1} (I^{\boldsymbol c})^k $ is closed by Lemma~\ref{lem:<I>closed}, and, 
by Proposition~\ref{prop:0-ideal}, each 
$(I^{\boldsymbol c})^k$ is an upper ideal of $\Delta(k)$. Therefore,
$\langle I^{\boldsymbol c}\rangle\cup \Delta(0)^+$ is closed, too.
\par
2. To prove that $\Delta({\ge}1)\setminus \langle I^{\boldsymbol c}\rangle=
\bigcup_{k\ge 1}\bigl(\Delta(k)\setminus (I^{\boldsymbol c})^k\bigr)$ is closed, one uses the fact
that each $\Delta(k)\setminus (I^{\boldsymbol c})^k$ is a lower ideal and
repeats {\sl mutatis mutandis} the inductive argument of the previous proof.
\end{proof}

By Theorem~\ref{thm:biconvex-}, there is a unique $w\in W$ such that $N(w)=\langle I\rangle$. In particular,  

 ${\color{MIX}(\clubsuit)}$ \hfil $N(w)\cap \Delta(1)=I$. \hfil

\noindent Since 
$N(w)\subset\Delta({\ge}1)$, we also have $w\in W^0$. Furthermore, if $w'\in W^0$ also satisfies 
${\color{MIX}(\clubsuit)}$, then $N(w')\supset \langle I\rangle=N(w)$. Thus, 
$w$ is the unique element of {\bf minimal} length in $W^0$ such that the $1$-component of $N(w)$ is $I$.
We shall say that $w$ is the {\it minimal element of} $I$ and denote it by $w_{I,\text{min}}$.
Likewise, by Theorem~\ref{thm:biconvex+}, there is a unique $\tilde w\in W^0$ such that 
$N(\tilde w)=\Delta({\ge}1)\setminus \langle I^{\boldsymbol c}\rangle$. Clearly, 
the $1$-component of $N(\tilde w)$ is $I$.
Furthermore, if $w'\in W^0$ also satisfies 
${\color{MIX}(\clubsuit)}$, then $\Delta^+\setminus N(w')\supset 
\langle I^{\boldsymbol c}\rangle\cup \Delta(0)^+=\Delta^+\setminus N(\tilde w)$.
Thus, $\tilde w$ is the unique element of {\bf maximal} length in $W^0$ such that the $1$-component of 
$N(\tilde w)$ is $I$.
For this reason, we say that $\tilde w$ is the {\it maximal element of} $I$ and denote it by $w_{I,\text{max}}$.

\begin{rmk}
It is readily seen that if $w\in W^0$, then $I_w:=N(w)\cap \Delta(1)$ is a lower ideal in $\Delta(1)$.
This provides the natural map $\tau: W^0 \to \jdod$, $w\mapsto I_w$.
An offspring of Theorems~\ref{thm:biconvex-} and \ref{thm:biconvex+} is that  $\tau$ is onto and we have
two sections
$\boldsymbol{s}_{min}, \boldsymbol{s}_{max}: \jdod\to W^0$ for $\tau$, where
$\boldsymbol{s}_{min}(I)= w_{I,{\rm min}}$ \ and \ $\boldsymbol{s}_{max}(I)=w_{I,{\rm max}}$. 
\end{rmk}

Recall that the {\it weak Bruhat order\/} ``$\leq$'' on (any subset of) $W$ is defined by the condition that
$w\leq w'$ if and only if $N(w)\subset N(w')$. As a consequence of preceding results, we obtain the following interesting fact.

\begin{thm}   \label{thm:interval-weak-Br}
For any $I\in \jdod$, $\tau^{-1}(I)$ is an interval with respect to the weak Bruhat order in
$W^0$. Namely, $\tau^{-1}(I)=\{w\in W^0\mid w_{I,{\rm min}} \leq w\leq w_{I,{\rm max}}\}$.
\end{thm}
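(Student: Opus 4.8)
The plan is to show the two inclusions separately, using the characterisation of $\leq$ via inversion sets together with the fact (established just above the statement) that $\langle I\rangle = N(w_{I,\mathrm{min}})$ is the smallest inversion set among $w\in W^0$ with $\tau(w)=I$, and $\Delta({\ge}1)\setminus\langle I^{\boldsymbol c}\rangle = N(w_{I,\mathrm{max}})$ is the largest. First I would take any $w\in\tau^{-1}(I)$, i.e.\ $w\in W^0$ with $N(w)\cap\Delta(1)=I$. Since $w\in W^0$ we have $N(w)\subset\Delta({\ge}1)$, and $N(w)$ is bi-convex (Kostant), hence closed; as it contains $I$ it contains $\langle I\rangle$ by definition of the latter as the smallest closed set containing $I$. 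This gives $N(w_{I,\mathrm{min}})=\langle I\rangle\subset N(w)$, i.e.\ $w_{I,\mathrm{min}}\leq w$. For the upper bound, the complement $\Delta^+\setminus N(w)$ is also closed and contains $\Delta(0)^+$ (as $w\in W^0$) and contains $I^{\boldsymbol c}=\Delta(1)\setminus I$; hence it contains $\Delta(0)^+\cup\langle I^{\boldsymbol c}\rangle = \Delta^+\setminus N(w_{I,\mathrm{max}})$, which is exactly $N(w)\subset N(w_{I,\mathrm{max}})$, i.e.\ $w\leq w_{I,\mathrm{max}}$. This proves $\tau^{-1}(I)\subseteq\{w\in W^0\mid w_{I,\mathrm{min}}\leq w\leq w_{I,\mathrm{max}}\}$.

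For the reverse inclusion, suppose $w\in W^0$ with $w_{I,\mathrm{min}}\leq w\leq w_{I,\mathrm{max}}$, i.e.\ $\langle I\rangle\subset N(w)\subset \Delta({\ge}1)\setminus\langle I^{\boldsymbol c}\rangle$. Intersecting with $\Delta(1)$: on the one hand $N(w)\cap\Delta(1)\supseteq\langle I\rangle\cap\Delta(1)=I^1=I$; on the other hand $N(w)\cap\Delta(1)\subseteq\bigl(\Delta({\ge}1)\setminus\langle I^{\boldsymbol c}\rangle\bigr)\cap\Delta(1)=\Delta(1)\setminus (I^{\boldsymbol c})^1=\Delta(1)\setminus I^{\boldsymbol c}=I$. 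Hence $N(w)\cap\Delta(1)=I$, that is $\tau(w)=I$, so $w\in\tau^{-1}(I)$. This closes the argument, since both $w_{I,\mathrm{min}}$ and $w_{I,\mathrm{max}}$ themselves lie in $\tau^{-1}(I)$ and are comparable to every element of it by the first inclusion, the set really is the closed interval claimed.

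I do not anticipate a genuine obstacle here: the whole statement is essentially a formal consequence of the extremality of $w_{I,\mathrm{min}}$ and $w_{I,\mathrm{max}}$ combined with the observation that $\tau^{-1}(I)$ is precisely the set of $w\in W^0$ sandwiched between them. The only point requiring a little care is the reverse inclusion — one must check that being sandwiched between $w_{I,\mathrm{min}}$ and $w_{I,\mathrm{max}}$ in the weak order forces the $1$-component of $N(w)$ to be exactly $I$ and not merely to contain $I$ or be contained in $I$; this is handled by the two displayed computations above, using $\langle I\rangle\cap\Delta(1)=I$ and $\langle I^{\boldsymbol c}\rangle\cap\Delta(1)=I^{\boldsymbol c}$, both of which follow from Proposition~\ref{prop:0-ideal} (more precisely, from $I^k\subset\Delta(k)$ and $(I^{\boldsymbol c})^k\subset\Delta(k)$, so the degree-$1$ part of either set is unchanged). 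I would also remark that, a posteriori, the interval $\tau^{-1}(I)$ is an interval in $W^0$ with the induced weak order, and that the statement is consistent with $\tau$ being an order-preserving surjection whose fibres are intervals.
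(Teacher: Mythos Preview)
Your proof is correct and follows essentially the same approach as the paper: establish $\langle I\rangle\subset N(w)\subset \Delta({\ge}1)\setminus\langle I^{\boldsymbol c}\rangle$ for $w\in\tau^{-1}(I)$ via closedness of $N(w)$ and its complement, and then check the reverse inclusion by intersecting with $\Delta(1)$. The paper's version is terser (it dismisses the reverse inclusion as ``obvious''), whereas you have spelled out both directions carefully; no substantive difference.
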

\begin{proof}
If $w\in \tau^{-1}(I)$, then $N(w)\cap \Delta(1)=I$ and hence 
\[
   N(w_{I,{\rm min}})=\langle I\rangle \subset N(w)\subset 
   \Delta({\ge}1)\setminus \langle I^{\boldsymbol c}\rangle=N(w_{I,{\rm max}}) ,
\]
in view of the definitions of $w_{I,{\rm min}}$ and $w_{I,{\rm max}}$. That is, $w_{I,{\rm min}} \leq w\leq w_{I,{\rm max}}$.

The other implication is obvious.
\end{proof}

\begin{df}   \label{def:min-&-max}
The set of {\it minimal elements of\/} $W^0$ is
$W^0_{\rm min}=\{w_{I,{\rm max}}\mid I\in \jdod \}$;
\\ \indent
The set of {\it maximal elements of\/} $W^0$ is
$W^0_{\rm max}=\{w_{I,{\rm max}}\mid I\in \jdod \}$. 
\end{df}

Our next aim is to provide alternative descriptions of the sets $W^0_{\rm min}=\boldsymbol{s}_{min}(\jdod)$ 
and  $W^0_{\rm max}=\boldsymbol{s}_{max}(\jdod)$.

\begin{thm}    \label{thm:0-min-apriori}
 $W^0_{\rm min}=\{w\in W^0\mid w^{-1}(\ap)\in \Delta({\ge}{-}1) \ \text{ for all }\ap\in\Pi \}$.
\end{thm}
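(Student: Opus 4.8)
The plan is to prove the two inclusions separately, using the characterisation of $W^0_{\rm min}$ via the minimal elements $w_{I,\rm min}$ together with the description $N(w_{I,\rm min})=\langle I\rangle$ and the fact that $\langle I\rangle\subset\Delta({\ge}1)$ is bi-convex (Theorem~\ref{thm:biconvex-}).

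First I would show $W^0_{\rm min}\subseteq\{w\in W^0\mid w^{-1}(\ap)\in\Delta({\ge}{-}1)\ \forall\ap\in\Pi\}$. Let $w=w_{I,\rm min}$ for some $I\in\jdod$, and fix $\ap\in\Pi$; I must show $w^{-1}(\ap)\in\Delta({\ge}{-}1)$. Suppose not, so $w^{-1}(\ap)\in\Delta({\le}{-}2)$, i.e. $\beta:=-w^{-1}(\ap)\in\Delta({\ge}2)\subset\Delta^+$ and $\beta\in N(w)=\langle I\rangle$. So $\beta\in I^k$ for some $k\ge 2$, hence $\beta=\gamma_1+\gamma_{k-1}$ with $\gamma_1\in I\subset\Delta(1)$ and $\gamma_{k-1}\in I^{k-1}\subset\Delta({\ge}1)$; in particular both $\gamma_1,\gamma_{k-1}\in N(w)$ (as $N(w)=\langle I\rangle$ is closed and contains its generators). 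Now apply $w$: $w(\beta)=-\ap$, while $w(\gamma_1)$ and $w(\gamma_{k-1})$ both lie in $-\Delta^+$ since $\gamma_1,\gamma_{k-1}\in N(w)$. Thus $-\ap=w(\gamma_1)+w(\gamma_{k-1})$ is a sum of two negative roots, so $\ap$ is a sum of two positive roots --- contradicting that $\ap$ is simple. Hence $w^{-1}(\ap)\in\Delta({\ge}{-}1)$, as required. (One should double-check that $\beta\in N(w)$ really forces $w(\beta)\in-\Delta^+$: indeed $N(w)=\{\gamma\in\Delta^+\mid w(\gamma)\in-\Delta^+\}$ by definition.)

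For the reverse inclusion, suppose $w\in W^0$ satisfies $w^{-1}(\ap)\in\Delta({\ge}{-}1)$ for all $\ap\in\Pi$. Since $w\in W^0$ we have $N(w)\subset\Delta({\ge}1)$, so $I:=N(w)\cap\Delta(1)=\tau(w)\in\jdod$, and by Theorem~\ref{thm:interval-weak-Br} we have $w_{I,\rm min}\le w$, i.e. $\langle I\rangle=N(w_{I,\rm min})\subseteq N(w)$. It remains to prove the reverse inclusion $N(w)\subseteq\langle I\rangle$, for then $w=w_{I,\rm min}\in W^0_{\rm min}$. Suppose $N(w)\not\subseteq\langle I\rangle$, and pick $\beta\in N(w)\setminus\langle I\rangle$ of minimal height. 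Since $\beta\notin\langle I\rangle\supseteq\Delta(1)$ we have $\beta\in\Delta({\ge}2)$, so $\hot(\beta)\ge 2$ and there is a simple root $\ap$ with $\hot(\beta-\ap)=\hot(\beta)-1$ and $\beta-\ap\in\Delta^+$ (standard fact about positive roots: any non-simple positive root can be lowered by a simple root). Write $\mu:=\beta-\ap$. The strategy is to use $w(\beta)\in-\Delta^+$, $w(\ap)=-(-w(\ap))$ with $-w(\ap)=-w(\ap)$: by hypothesis $w^{-1}$ sends $\Pi$ into $\Delta({\ge}{-}1)$, equivalently $w(\Delta({\le}1))\supseteq -\Pi$ — more usefully, applying $w$ to $\beta=\mu+\ap$ gives $w(\beta)=w(\mu)+w(\ap)$. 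Here $w(\ap)$: note $\ap$ need not be related to $w$ directly, but $w(\beta)\in-\Delta^+$. I would argue that $\mu\in N(w)$ (using bi-convexity of $N(w)$: if $\mu\notin N(w)$ then $\mu\in\Delta^+\setminus N(w)$, and then since $\beta=\mu+\ap\in N(w)$ with $\ap\in\Delta^+$, the closedness of $\Delta^+\setminus N(w)$ forces $\ap\in N(w)$ — but $\ap\in\Pi$ and $N(w)\subset\Delta({\ge}1)$ would give $\ap\in\Delta(1)\subset\langle I\rangle$, fine, so actually I need: if $\ap\in N(w)$ then $\ap\in I$; either way one of $\mu\in N(w)$ or $\ap\in N(w)$ holds). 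Then by minimality of $\hot(\beta)$, both $\mu$ and $\ap$ (whichever lie in $N(w)$) lie in $\langle I\rangle$; since $\langle I\rangle$ is closed and $\beta=\mu+\ap$, we get $\beta\in\langle I\rangle$, a contradiction.

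The main obstacle is the reverse-inclusion argument: getting the minimal-height element $\beta\in N(w)\setminus\langle I\rangle$ to split as a sum of two elements that both lie in $\langle I\rangle$. The clean way is to separate cases according to whether $\beta-\ap\in N(w)$ or $\ap\in N(w)$ for the chosen simple root $\ap$; bi-convexity of $N(w)$ guarantees at least one holds (if neither, then $\beta=\mu+\ap$ would be a sum of two roots in $\Delta^+\setminus N(w)$, forcing $\beta\notin N(w)$). In the case $\ap\in N(w)$ one has $\ap\in\Delta(1)\cap N(w)=I\subseteq\langle I\rangle$ directly, and $\beta-\ap\in N(w)$ automatically by closedness of $N(w)$ under ... no — rather one then invokes $\beta-\ap\in\langle I\rangle$ by minimality only if $\beta-\ap\in N(w)$; so I should actually pick $\ap$ so that $\beta-\ap\in N(w)$, which bi-convexity plus "$\beta\in N(w)$" allows provided $\ap\notin N(w)$, and handle $\ap\in N(w)$ separately. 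Getting this case split exactly right, and confirming at each step that the relevant sub-sums stay inside $\langle I\rangle$ (using that $\langle I\rangle$ is closed, Lemma~\ref{lem:<I>closed}, and that $\langle I\rangle$ is generated over $I$), is the delicate part; everything else is routine manipulation of inversion sets.
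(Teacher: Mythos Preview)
Your forward inclusion is correct and essentially identical to the paper's argument.

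For the reverse inclusion, however, your minimal-height approach has a genuine gap, and it is not merely a matter of ``getting the case split exactly right''. Trace through what you actually invoke: $w\in W^0$, bi-convexity of $N(w)$, closedness of $\langle I\rangle$, and minimality of the height of $\beta$. The hypothesis $w^{-1}(\ap)\in\Delta({\ge}{-}1)$ is mentioned but never applied. If the argument worked with only those ingredients it would prove $N(w)=\langle I_w\rangle$ for \emph{every} $w\in W^0$, which is false once $\Delta(2)\ne\varnothing$. Concretely, to get $\beta\in\langle I\rangle$ from $\beta=\mu+\ap$ you need \emph{both} $\mu\in\langle I\rangle$ and $\ap\in\langle I\rangle$; the first needs $\mu\in N(w)$ (so minimality applies), the second needs $\ap\in I$, hence $\ap\in N(w)$. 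Bi-convexity only guarantees that \emph{at least one} of $\mu,\ap$ lies in $N(w)$. A small example shows the bad case is unavoidable: in $\GR{A}{3}$ with $\Pi(1)=\{\ap_1,\ap_3\}$, the element $w=s_2s_1s_3\in W^0$ has $N(w)=\{\ap_1,\ap_3,\ap_1{+}\ap_2{+}\ap_3\}$, $I=\{\ap_1,\ap_3\}=\langle I\rangle$, and for $\beta=\ap_1{+}\ap_2{+}\ap_3$ every admissible $\ap$ lies in $N(w)$ while $\beta-\ap$ does not. (Incidentally, ``$\langle I\rangle\supseteq\Delta(1)$'' is false; what gives $\beta\in\Delta({\ge}2)$ is $N(w)\cap\Delta(1)=I\subseteq\langle I\rangle$.)

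The paper's route is different and brings the hypothesis in at the decisive moment. From $N(w_{I,\rm min})=\langle I\rangle\subseteq N(w)$ one obtains a length-additive factorisation $w=u\cdot w_{I,\rm min}$. If $u\ne 1_W$, write $w=s_\ap u' w_{I,\rm min}$ with $\ell(w)=1+\ell(u'w_{I,\rm min})$; then $N(w)=N(u'w_{I,\rm min})\cup\{\gamma\}$ with $\gamma=(u'w_{I,\rm min})^{-1}(\ap)$. Since $N(u'w_{I,\rm min})$ already contains all of $I$, the extra root $\gamma$ is forced into some $\Delta(k)$ with $k\ge 2$, and then $w^{-1}(\ap)=-\gamma\in\Delta(-k)$ directly contradicts the hypothesis. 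So rather than trying to decompose an unknown $\beta$ inside $\langle I\rangle$, the paper manufactures a specific simple root $\ap$ for which $w^{-1}(\ap)$ visibly escapes $\Delta({\ge}{-}1)$.
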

\begin{proof}
{\sf\bfseries (i)} Suppose that $w=w_{I,\min}$ and $w^{-1}(\ap)\in \Delta(-k)$ for some $\ap\in\Pi$ and 
$k\ge 1$. More precisely, if  $w^{-1}(\ap)=-\gamma$, then $w(\gamma)=-\ap$. Hence $\gamma\in I^k$. 
Assume that $k\ge 2$. Then $\gamma=\gamma'+\gamma''$ with $\gamma'\in I$ and 
$\gamma''\in I^{k-1}$.  Here we would obtain that $-\ap=w(\gamma')+w(\gamma'')$ is a sum of two 
negative roots, which is absurd. Thus, $k\le 1$.

{\sf\bfseries (ii)} Conversely, suppose that $w\in W^0$ has the property that 
$w^{-1}(\ap)\in \Delta({\ge}{-}1)$ for all $\ap\in\Pi$. Set $I=N(w)\cap \Delta(1)$. Then
$I\in \jdod$, because $w\in W^0$. Therefore $\langle I\rangle=N(w_{I,{\rm min}})$ and  
$N(w_I)\subset N(w)$. The last inclusion implies that $w=uw_{I,{\rm min}}$ for some $u\in W$ such that 
$\ell(w)=\ell(u)+\ell(w_{I,{\rm min}})$, see e.g.~\cite[Lemma\,5.1]{som-tym}. 
Assume that $u\ne 1_W$. Then $w=s_\ap u'w_{I,{\rm min}}$ for some 
$\ap\in\Pi$ such that $\ell(u)=1+\ell(u')$ and therefore
\[
     N(w)=N(u'w_{I,{\rm min}})\cup (u'w_{I,{\rm min}})^{-1}(\ap) .
\]
Since $\ell(u'w_{I,{\rm min}})=\ell(u')+\ell(w_{I,{\rm min}})$, we have
$N(u'w_{I,{\rm min}})\supset N(w_{I,{\rm min}})\supset I$. Therefore $(u'w_{I,{\rm min}})^{-1}(\ap)\in \Delta(k)$ 
and here $k\ge 2$.  Then  $w^{-1}(\ap)=-(u'w_{I,{\rm min}})^{-1}(\ap)\in \Delta(-k)$, which contradicts the 
assumption on $w$. Thus, $w=w_{I,{\rm min}}$, and  we are done.
\end{proof}

\begin{thm}    \label{thm:0-max-apriori}
 $W^0_{\rm max}=\{w\in W^0\mid w^{-1}(\ap)\in \Delta({\le}1) \ \text{ for all }\ap\in\Pi \}$.
\end{thm}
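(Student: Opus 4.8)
The plan is to mirror the proof of Theorem~\ref{thm:0-min-apriori}, replacing $w_{I,\min}$ and $\langle I\rangle$ by $w_{I,\max}$ and $\Delta({\ge}1)\setminus\langle I^{\boldsymbol c}\rangle$, and the condition ``$w^{-1}(\ap)\in\Delta({\ge}{-}1)$'' by ``$w^{-1}(\ap)\in\Delta({\le}1)$''. The key structural fact to exploit is the dual description of $w_{I,\max}$: its inversion set is $N(w_{I,\max})=\Delta({\ge}1)\setminus\langle I^{\boldsymbol c}\rangle$, equivalently $\Delta^+\setminus N(w_{I,\max})=\Delta(0)^+\cup\langle I^{\boldsymbol c}\rangle$, and $w_{I,\max}$ is the element of maximal length in its $\tau$-fibre.

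For the inclusion $W^0_{\max}\subseteq\{w\in W^0\mid w^{-1}(\ap)\in\Delta({\le}1)\ \forall\ap\in\Pi\}$, I would argue as follows. Suppose $w=w_{I,\max}$ and $w^{-1}(\ap)\in\Delta(k)$ for some $\ap\in\Pi$; the claim is $k\le 1$. If $k\ge 2$ then $w^{-1}(\ap)=\gamma$ with $\gamma\in\Delta(k)$ and $w(\gamma)=\ap$, and in particular $\gamma\notin N(w)$, so $\gamma\in\langle I^{\boldsymbol c}\rangle$, i.e. $\gamma\in(I^{\boldsymbol c})^k$. Then $\gamma=\gamma'+\gamma''$ with $\gamma'\in I^{\boldsymbol c}\subseteq\Delta(1)$ and $\gamma''\in(I^{\boldsymbol c})^{k-1}\subseteq\Delta(k-1)$; both lie in $\langle I^{\boldsymbol c}\rangle$, hence outside $N(w)$, so $w(\gamma'),w(\gamma'')\in\Delta^+$. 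But then $\ap=w(\gamma)=w(\gamma')+w(\gamma'')$ would be a sum of two positive roots, which is impossible. (For $k=0$ there is nothing to prove, and $k=1$ is the case $\gamma\in\Delta(1)$.) Hence $k\le 1$.

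For the reverse inclusion, suppose $w\in W^0$ satisfies $w^{-1}(\ap)\in\Delta({\le}1)$ for all $\ap\in\Pi$, and put $I=N(w)\cap\Delta(1)\in\jdod$. Since $w\in\tau^{-1}(I)$, Theorem~\ref{thm:interval-weak-Br} gives $N(w)\subseteq N(w_{I,\max})=\Delta({\ge}1)\setminus\langle I^{\boldsymbol c}\rangle$, so $w\le w_{I,\max}$ in the weak order; this yields $w_{I,\max}=w u$ with $\ell(w_{I,\max})=\ell(w)+\ell(u)$ for some $u\in W$. Assuming $u\ne 1_W$, write $u=s_\ap u'$ with $\ap\in\Pi$ and $\ell(u)=1+\ell(u')$, so that $w_{I,\max}=ws_\ap u'$... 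I prefer instead to peel off a simple reflection on the left of $u$: write $w_{I,\max}=w u$ and pick a reduced expression of $u$ ending in a simple reflection, so $w_{I,\max}=w' s_\beta$ with $w'=wu''$, $\ell(w_{I,\max})=\ell(w')+1$, and $\beta=w'^{-1}$ applied appropriately; the cleanest form is: there is $\ap\in\Pi$ with $N(w_{I,\max})=N(w')\sqcup\{w'^{-1}(\ap)\}$ and $N(w)\subseteq N(w')$. Then $w'^{-1}(\ap)\in N(w_{I,\max})\setminus N(w)\subseteq\Delta({\ge}1)\setminus I=\Delta({\ge}2)$, say $w'^{-1}(\ap)\in\Delta(m)$ with $m\ge 2$, hence $w^{-1}(\ap)=(w'')^{-1}w'^{-1}(\ap)$ where $w''=u''$ acts on $\Delta({\ge}2)$... and since $w''$ has all its inversions inside $N(w_{I,\max})\subseteq\Delta({\ge}1)$ and in fact $w^{-1}$ maps $w'^{-1}(\ap)$ into $\Delta({\ge}2)$ because the intermediate reflections only add roots of $\Delta({\ge}1)$ — giving $w^{-1}(\ap)\in\Delta({\ge}2)$, contradicting the hypothesis. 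Therefore $u=1_W$ and $w=w_{I,\max}$.

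The main obstacle is the bookkeeping in the last paragraph: one must verify carefully that peeling simple reflections off $u$ (equivalently, walking along a geodesic in the weak order from $w$ up to $w_{I,\max}$) never moves the relevant root out of $\Delta({\ge}2)$, so that the contradiction with $w^{-1}(\ap)\in\Delta({\le}1)$ actually materialises. Concretely, I would set it up exactly as in part (ii) of the proof of Theorem~\ref{thm:0-min-apriori}: write $w_{I,\max}=us_\ap u' $ is awkward, so instead factor $w=w_{I,\max} v$ with $\ell(w)+\ell(v)=\ell(w_{I,\max})$... the symmetric trick is $w_{I,\max}=wu$, and then consider the last step $w_{I,\max}=(wu')s_\ap$ with $\ell$ additive; then $w^{-1}(\ap)=(u')^{-1}((wu')^{-1}(\ap))$ lies in $\Delta({\ge}2)$ because $(wu')^{-1}(\ap)\in N(s_\ap (wu')^{... })$ — the computation is routine once one invokes $N(u'w_{I,{\rm min}})\supset N(w_{I,{\rm min}})$-type monotonicity, here in the form $N(wu')\supseteq N(w)$ so $\Delta^+\setminus N(wu')\subseteq\Delta^+\setminus N(w)$, and $\Delta^+\setminus N(w_{I,\max})\supseteq I^{\boldsymbol c}$, forcing the peeled root into $\Delta({\ge}2)$. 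I expect no genuine difficulty beyond transcribing the argument of Theorem~\ref{thm:0-min-apriori} in its dual form.
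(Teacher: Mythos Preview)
Your overall strategy---dualise the proof of Theorem~\ref{thm:0-min-apriori}---is exactly what the paper intends (its proof reads ``similar to the previous one and left to the reader''), and your part {\sf\bfseries (i)} is correct and cleanly executed.

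Part {\sf\bfseries (ii)}, however, has a concrete slip and an unfinished step. First, from $N(w)\subseteq N(w_{I,\max})$ the correct factorisation (by the same lemma cited in Theorem~\ref{thm:0-min-apriori}) is $w_{I,\max}=u\,w$, not $w\,u$: recall that when $\ell(xy)=\ell(x)+\ell(y)$ one has $N(xy)=N(y)\sqcup y^{-1}N(x)$, so it is multiplication on the \emph{left} that enlarges the inversion set. Second, you peel a simple reflection off the \emph{left} of $u$, which produces a root of the form $(u'w)^{-1}(\ap)$ rather than $w^{-1}(\ap)$, and then you are forced into the ``bookkeeping'' you flag as an obstacle (and never complete). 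Along the way the assertion ``$\Delta({\ge}1)\setminus I=\Delta({\ge}2)$'' is false as written; the root in question avoids $\Delta(1)$ only because it lies in $N(w_{I,\max})$ (hence $\notin I^{\boldsymbol c}$) \emph{and} outside $N(w')\supseteq I$.

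The clean dual of Theorem~\ref{thm:0-min-apriori}(ii) is to peel on the other side. With $w_{I,\max}=u\,w$ and $u\ne 1_W$, write $u=u's_\beta$ with $\beta\in\Pi$ and $\ell(u)=\ell(u')+1$. Then $\ell(s_\beta w)=\ell(w)+1$ and $N(s_\beta w)=N(w)\sqcup\{w^{-1}(\beta)\}\subseteq N(w_{I,\max})$. Thus $w^{-1}(\beta)\in N(w_{I,\max})\setminus N(w)$. Since $w^{-1}(\beta)\in N(w_{I,\max})=\Delta({\ge}1)\setminus\langle I^{\boldsymbol c}\rangle$ it is not in $I^{\boldsymbol c}$, and since $w^{-1}(\beta)\notin N(w)$ it is not in $I=N(w)\cap\Delta(1)$; hence $w^{-1}(\beta)\notin\Delta(1)$ and therefore $w^{-1}(\beta)\in\Delta({\ge}2)$, contradicting the hypothesis on $w$. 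This is the one-line fix that makes your outline into a complete proof.
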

\begin{proof}
The proof is similar to the previous one and left to the reader.
\end{proof}
 
Below we point out a relationship between an involution on $\jdod$, involution on $W^0$, 
and the subsets $W^0_{\rm min}$ and $W^0_{\rm max}$.
Let $w_0\in W$ and $\tilde w_0\in W(0)$ be the respective longest elements. It is easily seen that 
if $w\in W^0$, then $w_0w\tilde w_0\in W^0$. Therefore, the mapping 
\[
        w\in W^0  \mapsto i(w):=w_0w\tilde w_0  \in W^0
\]
is a well-defined involution on $W^0$, see \cite{eng}. For any $I\in \jdod$, we have defined the 
{\it dual lower ideal\/} $I^*$ by $I^*=\tilde w_0(\Delta(1)\setminus I)$, see~\cite[Sect.\,2]{ja}.
Note that $\#I+\#I^*=\#\Delta(1)$.

\begin{prop}   \label{lem:min-max-in-W^0}  
For any $w\in W^0$, we have
\par
{\sf\bfseries (i)} \ $(I_w)^*=I_{i(w)}$.

{\sf\bfseries (ii)} \ $ w\in W^0_{\rm min}$ \ if and only if \ $i(w)\in W^0_{\rm max}$. More precisely,
$i(w_{I,{\rm min}})=w_{I^*,{\rm max}}$.     
\end{prop}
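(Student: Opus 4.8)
The plan is to prove the two statements in turn, deriving (ii) from (i). For (i), I would start by unravelling the definitions: $I_w = N(w)\cap\Delta(1)$, so $(I_w)^* = \tilde w_0\bigl(\Delta(1)\setminus(N(w)\cap\Delta(1))\bigr) = \tilde w_0\bigl(\Delta(1)\setminus N(w)\bigr)$, since $\Delta(1)\setminus N(w) = \Delta(1)\setminus(N(w)\cap\Delta(1))$. On the other side, $I_{i(w)} = N(w_0w\tilde w_0)\cap\Delta(1)$. The key computational fact I would invoke is the standard description of inversion sets under left/right multiplication by longest elements: $N(w_0 w) = \Delta^+\setminus N(w)$ and $N(w\tilde w_0) = N(w)\cap N(\tilde w_0)^{\boldsymbol c}\ \cup\ \dots$ — more precisely, for $v\in W(0)$ one has $N(wv) = \bigl(N(w)\setminus v(-N(v))\bigr)\sqcup v\bigl(N(v)\bigr)$ suitably interpreted, but the clean route is: $N(w_0 w \tilde w_0) = \Delta^+\setminus N(w\tilde w_0)$, and then $\gamma\in N(w\tilde w_0) \iff w\tilde w_0\gamma\in -\Delta^+$. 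Intersecting with $\Delta(1)$ and using that $\tilde w_0\in W(0)$ preserves $\Delta(1)$ (because $W(0)$ acts on each $\Delta(k)$) reduces everything to a statement about the action of $\tilde w_0$ on $\Delta(1)$ and the action of $w_0w$ on $\Delta^+$. I expect $N(i(w))\cap\Delta(1)$ to come out as $\{\gamma\in\Delta(1) \mid w_0 w \tilde w_0\gamma\in-\Delta^+\} = \{\gamma\in\Delta(1)\mid w\tilde w_0\gamma\in\Delta^+\} = \tilde w_0\{\delta\in\Delta(1)\mid w\delta\in\Delta^+\} = \tilde w_0\bigl(\Delta(1)\setminus N(w)\bigr)$, which is exactly $(I_w)^*$. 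So (i) is essentially bookkeeping with these three standard identities.

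For (ii), the cleanest approach is to combine (i) with the a priori characterisations in Theorems~\ref{thm:0-min-apriori} and \ref{thm:0-max-apriori}. Suppose $w\in W^0_{\rm min}$; by Theorem~\ref{thm:0-min-apriori} this means $w^{-1}(\ap)\in\Delta({\ge}{-}1)$ for all $\ap\in\Pi$. I want to show $i(w)^{-1}(\ap)\in\Delta({\le}1)$ for all $\ap\in\Pi$, which by Theorem~\ref{thm:0-max-apriori} gives $i(w)\in W^0_{\rm max}$. Now $i(w)^{-1} = \tilde w_0 w^{-1} w_0$, so $i(w)^{-1}(\ap) = \tilde w_0 w^{-1}(w_0\ap)$. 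Since $w_0$ maps $\Pi$ to $-\Pi$, we have $w_0\ap = -\ap'$ for some $\ap'\in\Pi$, hence $i(w)^{-1}(\ap) = -\tilde w_0 w^{-1}(\ap')$. By hypothesis $w^{-1}(\ap')\in\Delta({\ge}{-}1)$, i.e.\ it lies in $\Delta(j)$ for some $j\ge -1$; applying $\tilde w_0\in W(0)$ preserves the grade $j$; hence $-\tilde w_0 w^{-1}(\ap')\in\Delta(-j)$ with $-j\le 1$, i.e.\ in $\Delta({\le}1)$. This is exactly what is needed, so $i(w)\in W^0_{\rm max}$, and applying the same argument with the roles reversed (or using that $i$ is an involution and the two characterisations are symmetric under $\Delta({\ge}{-}1)\leftrightarrow\Delta({\le}1)$) gives the converse.

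It remains to pin down the precise statement $i(w_{I,{\rm min}}) = w_{I^*,{\rm max}}$. Having shown $i$ maps $W^0_{\rm min}$ bijectively onto $W^0_{\rm max}$, I know $i(w_{I,{\rm min}})$ is the maximal element of some lower ideal; by definition that ideal is $\tau\bigl(i(w_{I,{\rm min}})\bigr) = I_{i(w_{I,{\rm min}})}$, which by part (i) equals $(I_{w_{I,{\rm min}}})^* = (I_w)^*$ — but $I_{w_{I,{\rm min}}} = I$ since $w_{I,{\rm min}}\in\tau^{-1}(I)$. So $i(w_{I,{\rm min}}) = w_{I^*,{\rm max}}$, as claimed. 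The main obstacle I anticipate is getting the three inversion-set identities for $w_0 w$, $w\tilde w_0$, and $w_0w\tilde w_0$ stated with the correct placement of inverses and signs, and in particular verifying carefully that $\tilde w_0$ preserves each grading component $\Delta(k)$ (this holds because $\tilde w_0\in W(0)$ and $W(0)$ stabilises $\tilde h$, hence each eigenspace $\g(k)$, hence each $\Delta(k)$) — everything else is a short symmetric argument once those facts are in place.
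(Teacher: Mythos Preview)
Your proposal is correct and follows essentially the same route as the paper. For part~(i) the paper uses the length-additive inversion-set identity $N(w\tilde w_0)=N(\tilde w_0)\cup \tilde w_0 N(w)=\Delta(0)^+\cup \tilde w_0(N(w))$ (citing \cite[Lemma\,5.1]{som-tym}) rather than your direct computation from $\gamma\in N(u)\iff u(\gamma)\in-\Delta^+$, but these are equivalent unpackings of the same elementary fact; for part~(ii) your argument via Theorems~\ref{thm:0-min-apriori} and~\ref{thm:0-max-apriori} together with $w_0(\Pi)=-\Pi$ and $\tilde w_0(\Delta(k))=\Delta(k)$ is exactly what the paper does, only spelled out in more detail.
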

\begin{proof}
(i) We have $N(w_0w\tilde w_0)=\Delta^+\setminus N(w\tilde w_0)$. Since 
$\ell(w\tilde w_0)=\ell(w)+\ell(\tilde w_0)$, one also has
$N(w\tilde w_0)=N(\tilde w_0)\cup (\tilde w_0)^{-1} N(w)=\Delta(0)^+\cup \tilde w_0(N(w))$~\cite[Lemma\,5.1]{som-tym}. 
Therefore,
$N(w_0w\tilde w_0)\cap \Delta(1)=\Delta(1)\setminus \tilde w_0(N(w)\cap\Delta(1))$. That is,
$I_{i(w)}=\Delta(1)\setminus \tilde w_0(I_w)=(I_w)^*$.

(ii) Combine part (i), characterisations of $W^0_{\rm min}$ and $W^0_{\rm max}$ in 
Theorems~\ref{thm:0-min-apriori}, \ref{thm:0-max-apriori}, and the following properties of the longest
elements:  $w_0$ takes $\Pi$ to $-\Pi$;  whereas 
$\tilde w_0$ takes each $\Delta(i)$ to itself and also $\Delta(0)^+$ to $-\Delta(0)^+$. 
\end{proof}

For any subset $S\subset W$, define its Poincar\'e polynomial by
$ \displaystyle
  \boldsymbol{S}(t)=\sum_{w\in S}t^{\ell(w)}=\sum_{w\in S}t^{\# N(w)}$. 
The celebrated Kostant-Macdonald identity \cite{macd72} says that 
\beq   \label{eq:KM-ident}
\boldsymbol{W}(t)=\displaystyle\prod_{\gamma\in\Delta^+}
\frac{1-t^{\hot(\gamma)+1}}{1-t^{\hot(\gamma)}} .
\eeq
In particular,  $\# W=\prod_{\gamma\in\Delta^+}\frac{\hot(\gamma)+1}{\hot(\gamma)}$.

\begin{ex}   \label{ex:abelian+Eng}
Let $\Delta=\bigsqcup_{i=-1}^1\Delta(i)$ be an abelian grading. Then
Theorems~\ref{thm:0-min-apriori} and \ref{thm:0-max-apriori}
immediately imply that $W^0_{\rm min}=W^0_{\rm max}=W^0$. Therefore $\#\jdod=\#W^0$. Furthermore,
$i(w)=w$ if and only if $(I_w)^*=I_w$. For any parabolic subgroup $W(0)\subset W$, we have
\[
     \#\{w\in W^0\mid i(w)=w\}=\boldsymbol{W^0}(-1) ,
\]
see~\cite{eng,reiner}. Therefore, in the abelian case, $\boldsymbol{W^0}(-1)$ equals the number of 
self-dual lower ideals in $\Delta(1)$. This has already been proved in \cite{stembr}. In the abelian case, 
$\boldsymbol{W^0}(t)$ coincides with
the rank-generating function for the poset of lower ideals, see e.g.~\cite[Sect.\,3]{ja}, i.e., 
$\boldsymbol{W^0}(t)=\mdt$ and thereby $\eus M_{\Delta(1)}(-1)$ is the number of self-dual lower ideals.
\end{ex}

\begin{rmk}
For the non-abelian $\BZ$-gradings (i.e., if $\Delta(2)\ne\varnothing$), $W^0_{\rm min}$ and  
$W^0_{\rm max}$ are different proper subsets $W^0$. 
Moreover, the polynomials
$\boldsymbol{W^{0}_{\rm min}}(t)$, $\boldsymbol{W^0_{\rm max}}(t)$, and $\mdt$, which have the same value at $t=1$, are different.
For the reader convenience, we compare explicit formulae for all these polynomials:
\begin{gather*}
 \mdt=\sum_{w\in W^0_{\rm min}} t^{\# (N(w)\cap \Delta(1))}=
 \sum_{w\in W^0_{\rm max}} t^{\# (N(w)\cap \Delta(1))} ,
\\
 \boldsymbol{W^{0}_{\rm min}}(t)=\sum_{w\in W^0_{\rm min}} t^{\#N(w)}, \quad
 \boldsymbol{W^0_{\rm max}}=\sum_{w\in W^0_{\rm max}} t^{\#N(w)} . 
\end{gather*}
We have conjectured in \cite[Conjecture\,5.2]{ja} (and verified in many cases) that 
$\eus M_{\Delta(1)}(-1)$ yields the number of self-dual lower ideals in $\Delta(1)$
for {\bf any} $\BZ$-grading. That is, in a sense, $\mdt$ is the most appropriate $t$-analogue of $\#\jdod$. \end{rmk}
\begin{ex}   \label{ex:extra-spe}
Let $\Delta=\bigsqcup_{i=-2}^2\Delta(i)$ be an extra-special grading. As $\Delta(2)=\{\theta\}$, it
follows from Theorem~\ref{thm:0-min-apriori} that, for $w\in W^0$, we have $w\in W^0_{\rm min}$ if and only if 
$w^{-1}(\ap)\ne -\theta$ for all $\ap\in\Pi$, i.e., $-w(\theta)\not\in \Pi$. Likewise, by 
Theorem~\ref{thm:0-max-apriori}, $w\in W^0_{\rm max}$ if and only if $w(\theta)\not\in\Pi$. Hence here 
$W^0=W^0_{\rm min}\cup W^0_{\rm max}$.
As $W(0)$ is the stabiliser of $\theta$ in $W$, we have $W^0{\cdot}\theta=W{\cdot}\theta$,
$\# W^0$ is the number of long roots in $\Delta$, and the non-minimal (or non-maximal) elements
of $W^0$ are parameterised by the set, $\Pi_l$, of long simple roots.
Since the number of long roots is $\#\Pi_l{\cdot}h$ \cite[Chap.\,VI, \S\,1.11, Prop.\,33]{bour}, this yields the equality 
$\#W^0_{\rm min}=\#\Pi_l{\cdot}(h-1)$. The last formula for the number of the ideals/antichains in $\Delta(1)$
was obtained earlier in \cite[Theorem\,4.2]{ja}. 
\end{ex}

\begin{rem}
If a $\BZ$-grading is neither abelian nor extra-special, then $W^0\ne W^0_{\rm min}\cup W^0_{\rm max}$.
\end{rem}
\noindent
Suppose now that the $\BZ$-grading in question is $1$-standard. More precisely, $\Pi=\Pi(0)\cup \Pi(1)$ 
and $\Pi(1)=\{ \tilde\ap \}$. For any $w\in W^0$, we look at the coefficient of $\tilde\ap$ for the roots
$w^{-1}(\ap)$, $\ap\in\Pi$. Namely, write
\[
     w^{-1}(\ap)=k_\ap(w)\tilde\ap+ \sum_{\ap_i\in \Pi(0)}l_i(w)\ap_i 
\]
and consider the mapping $\eta: W^0 \to \BZ^n$, \ $\eta(w)=(k_\ap(w))_{\ap\in\Pi}$.

\begin{thm}    \label{thm:injective-eta}
{\sf\bfseries (i)} \ The mapping $\eta$ is injective; 

{\sf\bfseries (ii)} \ $\eta(W^0_{\rm min})=\{  (k_\ap(w))_{\ap\in\Pi}\mid k_\ap(w)\ge -1\ \text{ for all } \ap\in\Pi\}$;

{\sf\bfseries (iii)} \ $\eta(W^0_{\rm max})=\{  (k_\ap(w))_{\ap\in\Pi}\mid k_\ap(w)\le 1\ \text{ for all } \ap\in\Pi\}$;

\end{thm}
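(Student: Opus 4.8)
\textbf{Proof proposal for Theorem~\ref{thm:injective-eta}.}

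The plan is to treat the three parts in order, bootstrapping (ii) and (iii) off of (i) together with Theorems~\ref{thm:0-min-apriori} and~\ref{thm:0-max-apriori}. For part~(i), the key observation is that in a $1$-standard grading with $\Pi(1)=\{\tilde\ap\}$, the coefficient $k_\ap(w)=[w^{-1}(\ap):\tilde\ap]$ records, for each $\ap\in\Pi$, which graded piece $\Delta(j)$ the root $w^{-1}(\ap)$ lies in: indeed $w^{-1}(\ap)\in\Delta(j)$ exactly when $[w^{-1}(\ap):\tilde\ap]=j$, so $\eta(w)=(k_\ap(w))_{\ap\in\Pi}$ is just the list of grading degrees $(\text{deg}\,w^{-1}(\ap))_{\ap\in\Pi}$. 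Thus $\eta(w)$ determines, for every $\ap\in\Pi$, whether $w^{-1}(\ap)\in\Delta^+$ or $\in\Delta^-$ — in other words it determines the sign vector $(\text{sgn}(w^{-1}(\ap)))_{\ap\in\Pi}$. But a standard fact is that $w\in W$ is uniquely determined by the set $\{\ap\in\Pi\mid w^{-1}(\ap)\in\Delta^-\}=\{\ap\in\Pi\mid s_\ap w<w\}$ of left descents — no, that set alone does not suffice. The cleaner route: I claim $\eta$ is injective because $N(w)$ is recoverable from $\eta(w)$. First, $N(w)\subset\Delta({\ge}1)$ since $w\in W^0$, and $\gamma\in N(w)$ iff $w(\gamma)\in\Delta^-$. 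Now run induction on $\ell(w)$: if $w\ne 1$, pick $\ap\in\Pi$ with $w^{-1}(\ap)\in\Delta^-$, equivalently $k_\ap(w)\le -1$; such $\ap$ exists and is visible from $\eta(w)$. Set $w'=s_\ap w$, so $\ell(w')=\ell(w)-1$ and $w'\in W^0$ (because $s_\ap w < w$ and $W^0$ is closed under taking such prefixes, as $N(w')=N(w)\setminus\{w^{-1}(\ap)\}\subset\Delta({\ge}1)$). It remains to check that $\eta(w')$ is computable from $\eta(w)$: for each $\beta\in\Pi$, $(w')^{-1}(\beta)=w^{-1}s_\ap(\beta)=w^{-1}(\beta)-\langle\beta,\ap^\vee\rangle\,w^{-1}(\ap)$, so $k_\beta(w')=k_\beta(w)-\langle\beta,\ap^\vee\rangle k_\ap(w)$, and the right-hand side uses only entries of $\eta(w)$ and the (fixed) Cartan integers. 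By induction $w'$ is determined by $\eta(w')$, hence by $\eta(w)$, and then $w=s_\ap w'$ is determined. This gives injectivity.

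For part~(ii): by Theorem~\ref{thm:0-min-apriori}, $w\in W^0_{\rm min}$ iff $w^{-1}(\ap)\in\Delta({\ge}{-}1)$ for all $\ap\in\Pi$, which, by the identification above, says precisely that the grading degree of $w^{-1}(\ap)$ is $\ge -1$, i.e.\ $k_\ap(w)\ge -1$ for all $\ap\in\Pi$. Hence $\eta(W^0_{\rm min})$ is exactly the set of tuples in $\eta(W^0)$ all of whose coordinates are $\ge -1$; since $\eta$ is injective this is the stated description. Part~(iii) is identical, using Theorem~\ref{thm:0-max-apriori} and the condition $w^{-1}(\ap)\in\Delta({\le}1)\iff k_\ap(w)\le 1$.

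The main obstacle is part~(i), the injectivity of $\eta$. The subtlety is that $\eta(w)$ records only the $\tilde\ap$-coordinates of the roots $w^{-1}(\ap)$, discarding all $\Pi(0)$-coordinates, so a priori a lot of information is lost; one must show that the combinatorial skeleton retained — which graded stratum each $w^{-1}(\ap)$ falls into — is already rigid enough to pin down $w$. The descent-induction sketched above does this, but it requires the closure property ``$s_\ap w<w$, $w\in W^0\Rightarrow s_\ap w\in W^0$'', which should be checked (it follows since $N(s_\ap w)=N(w)\setminus\{w^{-1}(\ap)\}$ still sits inside $\Delta({\ge}1)$), and the transformation formula for $k_\beta$ under left multiplication by $s_\ap$. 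One should also make sure the base case is the identity ($\eta(1)=(0,\dots,0,1)$ in the coordinate indexed by $\tilde\ap$, zero elsewhere — wait, $1^{-1}(\ap)=\ap$, so $k_{\tilde\ap}(1)=1$ and $k_\ap(1)=0$ for $\ap\in\Pi(0)$), which is consistent and unambiguous. Everything else is bookkeeping with Cartan integers.
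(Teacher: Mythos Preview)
Your argument is correct, and for parts (ii) and (iii) it matches the paper exactly: both simply invoke Theorems~\ref{thm:0-min-apriori} and~\ref{thm:0-max-apriori} together with the observation that $k_\ap(w)=i$ if and only if $w^{-1}(\ap)\in\Delta(i)$.

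For part (i), however, your route is genuinely different from the paper's. The paper observes that $k_\ap(w)=(\varpi_{\tilde\ap}^\vee,\,w^{-1}(\ap))=(w(\varpi_{\tilde\ap}^\vee),\,\ap)$, so that $\eta(w)$ is nothing but the coordinate vector of $w(\varpi_{\tilde\ap}^\vee)$ in the basis $\{\varpi_\beta^\vee\}$ dual to $\Pi$. Since $W(0)$ is the stabiliser of $\varpi_{\tilde\ap}^\vee$, the map $w\mapsto w(\varpi_{\tilde\ap}^\vee)$ is injective on coset representatives $W^0$, and injectivity of $\eta$ drops out in one line. This also immediately yields the geometric picture in the Remark after the theorem, identifying $\eta(W^0)$ with the orbit $W{\cdot}\varpi_{\tilde\ap}^\vee$. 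Your descent induction, by contrast, is a self-contained combinatorial reconstruction of $w$ from $\eta(w)$: it avoids introducing $\varpi_{\tilde\ap}^\vee$ altogether and gives an explicit algorithm, at the cost of more bookkeeping (the closure of $W^0$ under left descent, the Cartan-integer recursion for $k_\beta$). One small slip: when you write $N(s_\ap w)=N(w)\setminus\{w^{-1}(\ap)\}$, the root removed is actually $-w^{-1}(\ap)$ (which lies in $\Delta^+$, unlike $w^{-1}(\ap)$); this does not affect the conclusion that $N(s_\ap w)\subset\Delta({\ge}1)$.
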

\begin{proof}
(i) \ Let $\{\varpi_{\ap}^\vee\}_{\ap\in\Pi}$ be the fundamental weights of the dual Lie algebra $\g^\vee$ corresponding to $\Pi$.
In other words, $(\ap, \varpi_{\beta}^\vee)=\delta_{\ap\beta}$ for all
$\ap,\beta\in\Pi$, i.e., $\{\varpi_{\ap}^\vee\}_{\ap\in\Pi}$ is the dual basis to $\Pi$.
Then $W(0)$ is the stabiliser of $\varpi_{\tilde\ap}^\vee$ in $W$ and all weights
$w(\varpi_{\tilde\ap}^\vee)$, $w\in W^0$, are different. We have $(w(\varpi_{\tilde\ap}^\vee),\ap)=
(\varpi_{\tilde\ap}^\vee, w^{-1}(\ap))=k_\ap(w)$. Whence $w(\varpi_{\tilde\ap}^\vee)=
\sum_{\ap\in\Pi} k_\ap(w) \varpi_\ap^\vee$.

(ii),\,(iii). This readily follows from Theorems~\ref{thm:0-min-apriori} and \ref{thm:0-max-apriori}, because
$w^{-1}(\ap)\in \Delta(i)$ if and only if $k_\ap(w)=i$.
\end{proof}
\begin{rem}
The above proof suggests to regard $\eta$ as a mapping from $W^0$ to the lattice
$\mathcal L=\{ \sum_{\ap\in\Pi} k_\ap \varpi_\ap^\vee \mid k_\ap\in \BZ\}\simeq \BZ^n$ in $V$. Set also
$\mathcal C_{\ge -1}=\{\sum_{\ap\in\Pi} k_\ap \varpi_\ap^\vee \mid k_\ap \ge -1 \ \forall\ap\in\Pi\}$ and
$\mathcal C_{\le 1}=\{\sum_{\ap\in\Pi} k_\ap \varpi_\ap^\vee \mid k_\ap \le 1 \ \forall\ap\in\Pi\}$.
Then Theorem~\ref{thm:injective-eta} asserts that 
\\[.7ex]
\centerline{ $\eta(W^0_{\rm min})=W{\cdot}\varpi_{\tilde\ap}^\vee\cap \mathcal C_{\ge -1}$ and 
$\eta(W^0_{\rm max})=W{\cdot}\varpi_{\tilde\ap}^\vee\cap \mathcal C_{\le 1}$. }
\\[.5ex]
Thus, the minimal or maximal elements of $W^0$ are in a natural one-to-one correspondence with certain subsets of the $W$-orbit of $\varpi_{\tilde\ap}^\vee$.
\end{rem}

\begin{ex}  \label{ex:abelian-grad} 
The abelian gradings are $1$-standard and then $\varpi_{\tilde\ap}^\vee$ is a minuscule fundamental weight of $\g^\vee$. Then $(\varpi_{\tilde\ap}^\vee, \gamma)\in \{-1,0,1\}$ for all 
$\gamma\in\Delta$~\cite[Ch.\,VIII,\ \S\,7, n$^0$3]{bour7-8}. Consequently,
the whole orbit $W{\cdot}\varpi_{\tilde\ap}^\vee$ belongs to $\mathcal C_{\ge -1}\cap\mathcal C_{\le 1}$.
Here we again obtain that all elements of $W^0$ are both maximal 
and minimal, and therefore 
$\#\anod=\#W^0$. 
\end{ex}

\section{Extreme roots associated with the lower ideals in $\Delta(1)$}
\label{sect:extreme-roots}

Recall that any lower (resp. upper) ideal of a poset $\eus P$ is determined by its maximal (resp. minimal) 
elements. Below, we describe these extreme elements (roots) for the ideals in $\eus P=\Delta(1)$, using 
the corresponding minimal and maximal elements of $W^0$.

\begin{thm}  \label{thm:max-roots} 
If $I\in \jdod$ and  $\gamma\in \Delta(1)$, then 
 $\gamma\in \max(I)$ if and only if $w_{I,{\rm min}}(\gamma)\in -\Pi$.
\end{thm}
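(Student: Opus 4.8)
The plan is to characterise $\max(I)$ via the minimal element $w_{I,\min}$, exploiting the fact that $N(w_{I,\min})=\langle I\rangle$ and, crucially, the formula from step (i) of Theorem~\ref{thm:0-min-apriori}: for $w=w_{I,\min}$, if $w^{-1}(\ap)\in\Delta(-k)$ with $k\ge 1$ then $k=1$. Equivalently, $w_{I,\min}(\gamma)\in-\Pi$ can only happen when $\gamma\in\Delta(1)$ (not in any higher $\Delta(k)$), since $w_{I,\min}$ sends exactly one element of $\langle I\rangle$ to each negative simple root and that element lies in $\Delta(1)$. So the statement is really about which $\gamma\in I$ get mapped to $-\Pi$.

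First I would prove the ``only if'' direction. Suppose $\gamma\in\max(I)$. Then $\gamma\in I\subset N(w_{I,\min})$, so $w_{I,\min}(\gamma)=-\delta$ for some $\delta\in\Delta^+$. I want to show $\delta\in\Pi$. If not, write $\delta=\delta_1+\delta_2$ with $\delta_1,\delta_2\in\Delta^+$. Apply $w_{I,\min}^{-1}$: we get $\gamma=w_{I,\min}^{-1}(-\delta_1)+w_{I,\min}^{-1}(-\delta_2)$, a sum of two roots. At least one summand, say $w_{I,\min}^{-1}(-\delta_1)$, is positive — in fact it must lie in $N(w_{I,\min})=\langle I\rangle$ (since $w_{I,\min}$ sends it to a negative root). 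The other summand then lies in $\Delta(0)$ (as the grading degrees add to $1$ and $\langle I\rangle\subset\Delta({\ge}1)$): if it's in $\Delta(0)^+$, then $w_{I,\min}^{-1}(-\delta_1)=\gamma-(\text{positive }\Delta(0)\text{-root})\in\Delta(1)$ is a root strictly below $\gamma$ in the poset $\Delta(1)$; since $\langle I\rangle\cap\Delta(1)=I$ by $(\clubsuit)$ and $I$ is a lower ideal, actually this just reconfirms it's in $I$ — but the point is it contradicts maximality of $\gamma$ once we check it's a genuine poset-predecessor in $\Delta(1)$. If instead the $\Delta(0)$-summand is negative, then the other summand $w_{I,\min}^{-1}(-\delta_1)=\gamma+(\text{positive }\Delta(0)\text{-root})$, still in $\langle I\rangle\cap\Delta(1)=I$, is a poset-successor of $\gamma$ in $\Delta(1)$, again contradicting $\gamma\in\max(I)$. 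Hence $\delta\in\Pi$.

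For the ``if'' direction, suppose $w_{I,\min}(\gamma)=-\ap\in-\Pi$. Then $\gamma\in N(w_{I,\min})=\langle I\rangle$, and by step (i) of Theorem~\ref{thm:0-min-apriori} applied to $w_{I,\min}$, the preimage of $-\ap$ lies in $\Delta(1)$, so $\gamma\in\langle I\rangle\cap\Delta(1)=I$. Suppose $\gamma\notin\max(I)$, so there is $\gamma'\in I$ with $\gamma\curlestyle\gamma'$, say $\gamma'=\gamma+\beta$ with $\beta$ a sum of simple roots in $\Pi(0)$; picking $\gamma'$ a cover of $\gamma$ we may take $\beta\in\Pi(0)\subset\Delta(0)^+$. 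Apply $w_{I,\min}$: $w_{I,\min}(\gamma')=-\ap+w_{I,\min}(\beta)$. Now $\beta\in\Delta(0)^+$ and $w_{I,\min}\in W^0$ forces $w_{I,\min}(\beta)\in\Delta^+$ by \eqref{eq:w0-def}. But $w_{I,\min}(\gamma')$ is a negative root (as $\gamma'\in I\subset N(w_{I,\min})$), so $-\ap+w_{I,\min}(\beta)<0$ with $-\ap$ antidominant-simple and $w_{I,\min}(\beta)>0$: this is only possible if $w_{I,\min}(\beta)=\ap$ (so the sum is $0$, not a root) — contradiction — or $w_{I,\min}(\beta)$ is a positive root whose sum with $-\ap$ is negative, which for $\ap$ simple forces $w_{I,\min}(\beta)$ to have $\ap$-coefficient $0$, making $-\ap+w_{I,\min}(\beta)$ have negative $\ap$-coefficient hence fail to be a root unless it equals $-\ap$ plus something supported off $\ap$... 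Let me instead argue cleanly: $w_{I,\min}(\gamma')=w_{I,\min}(\gamma)+w_{I,\min}(\beta)=-\ap+w_{I,\min}(\beta)$ is a root, with $-\ap\in-\Pi$ and $w_{I,\min}(\beta)\in\Delta^+$. If $w_{I,\min}(\beta)=\ap$, the sum is $0$, impossible. Otherwise $-\ap+w_{I,\min}(\beta)$ is a root with strictly negative $\ap$-coordinate — impossible, since a root with negative $\ap$-coordinate other than $-\ap$ would need $w_{I,\min}(\beta)$ to already have negative $\ap$-coordinate. Either way we contradict $\gamma'\in I$, so $\gamma\in\max(I)$.

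The main obstacle I anticipate is making the sign/coefficient bookkeeping in both directions fully rigorous — in particular, in the ``only if'' direction, verifying that the root $w_{I,\min}^{-1}(-\delta_1)$ genuinely lies in $\Delta(1)$ (using that $\langle I\rangle\subset\Delta({\ge}1)$ and the degrees sum to $1$, so the $\Delta(0)$-summand has degree $0$), and that it is a true poset-neighbour of $\gamma$ within $\Delta(1)$ rather than merely $\curlestyle$-comparable after several steps; one reduces to covers by choosing $\delta$ minimal or by iterating. The ``if'' direction's obstacle is the elementary but fiddly claim that for $\ap\in\Pi$ and $\beta'\in\Delta^+$, the sum $-\ap+\beta'$ cannot be a root unless $\beta'=\ap$ — which follows because any root other than $-\ap$ with nonpositive $\ap$-coordinate has \emph{all} coordinates nonpositive. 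I would state that as a small preliminary observation and then the rest is routine.

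(Note: replace \curlestyle with \curle throughout — I used the macro name defined in the preamble.)
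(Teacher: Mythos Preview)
Your ``if'' direction is fine and is essentially the contrapositive of the paper's part~(i): if $\gamma\in I$ is not maximal, pick $\gamma'\in I$ covering $\gamma$, so $\gamma'=\gamma+\beta$ with $\beta\in\Pi(0)$; then $w(\gamma)=w(\gamma')-w(\beta)$ is a sum of two negative roots and hence not in $-\Pi$. Your coefficient argument at the end is correct in spirit (for $\ap\in\Pi$ and $\beta'\in\Delta^+$, the difference $\beta'-\ap$ can be a \emph{negative} root only if $\beta'=\ap$), though the sentence ``any root other than $-\ap$ with nonpositive $\ap$-coordinate has all coordinates nonpositive'' is literally false (e.g.\ $\ap_2$ in $\GR{A}{2}$).

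The ``only if'' direction has a genuine gap. When you write $w(\gamma)=-\delta_1-\delta_2$ and set $\mu_i=w^{-1}(-\delta_i)$, you correctly observe that exactly one $\mu_i$ is positive and then lies in $N(w)=\langle I\rangle$. But your claim ``the other summand then lies in $\Delta(0)$'' is unjustified: the positive summand $\mu_2$ lies in $I^k\subset\Delta(k)$ for some $k\ge 1$, and nothing you have said rules out $k\ge 2$. In that case $\mu_1\in\Delta(1-k)\subset -\Delta({\ge}1)$, not $\Delta(0)$, and your poset-neighbour argument does not apply. The parenthetical you offer (``degrees sum to $1$ and $\langle I\rangle\subset\Delta({\ge}1)$, so the $\Delta(0)$-summand has degree~$0$'') only shows the other degree is $\le 0$, not that it equals~$0$. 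Your suggested fixes (``choosing $\delta$ minimal'' or ``iterating'') do not work as stated: $\delta=-w(\gamma)$ is determined by $\gamma$, and picking $\delta_1\in\Pi$ together with Theorem~\ref{thm:0-min-apriori} only gives $\mu_1\in\Delta({\le}1)$, which is not enough.

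What is actually needed is the descent step that forms the heart of the paper's proof (case~$\mathbf{(3)}$): when $\mu_2\in I^k$ with $k\ge 2$, write $\mu_2=\mu'+\mu''$ with $\mu'\in I^{k'}$, $\mu''\in I^{k''}$, $k',k''<k$; by Lemma~\ref{lem:tri-kornya} one of $\mu_1+\mu'$, $\mu_1+\mu''$ is a root, giving a new decomposition $\gamma=\tilde\mu_1+\tilde\mu_2$ with $\tilde\mu_2\in I^{\tilde k}$ and $\tilde k<k$. Iterating brings you down to $k=1$, where your argument (and the paper's case~$\mathbf{(2)}$) finishes. This induction on $k$, powered by Lemma~\ref{lem:tri-kornya}, is the missing idea.
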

\begin{proof} Write $w$ for $w_{I,{\rm min}}$ in this proof. Recall that $\gamma\in I$ if and only if 
$w(\gamma)\in -\Delta^+$.

\noindent
{\sf\bfseries (i)} \ If $\gamma\in I$ and $\gamma\not\in \max(I)$, then $\gamma=\gamma'-\delta$ for some $\gamma'\in I$ 
and $\delta\in \Delta(0)^+$. Then $w(\gamma)=w(\gamma')-w(\delta)$ is a sum of negative roots.

\noindent
{\sf\bfseries (ii)} \ Conversely, if $\gamma\in I$ and $w(\gamma)\not\in -\Pi$, then $w(\gamma)=-\delta_1-\delta_2$, where 
$\delta_i\in\Delta^+$. Hence $-w^{-1}(\delta_1)-w^{-1}(\delta_2)=\gamma\in \Delta(1)$. 
Set $\mu_i=-w^{-1}(\delta_i)$, so that $\gamma=\mu_1+\mu_2$. Without loss 
of generality, we may assume that $\mu_2$ is positive. Let us consider possible 
levels of $\mu_2$ and consequences of that for $\gamma$.

$\mathbf{(1)}$ \ The case in which $\mu_2\in \Delta(0)^+$ is  impossible, since $w(\mu_2)=-\delta_2$ 
and $w\in W^0$.

$\mathbf{(2)}$ \ Suppose that $\mu_2\in \Delta(1)$. Since $w(\mu_2)$ is negative, we have $\mu_2\in I$. 
Furthermore, here $\mu_1\in \Delta(0)$. As in $\mathbf{(1)}$, the case $\mu_1\in \Delta(0)^+$ is
impossible. Hence $-\mu_1\in \Delta(0)^+$ and then $\gamma=\mu_1+\mu_2\prec \mu_2$, i.e.,
$\gamma\not\in\max(I)$.

$\mathbf{(3)}$ \ Suppose that $\mu_2\in \Delta(k)$, $k\ge 2$. Let us show that there is another decomposition
$\gamma=\tilde\mu_1+\tilde\mu_2$ such that  $\tilde\mu_2\in \Delta(\tilde k)$ with $0<\tilde k<k$.
\par
Since $w(\mu_2)$ is negative, we have $\mu_2\in I^k$ by the very definition of $w=w_{I,{\rm min}}$. 
Hence, $\mu_2=\mu'+\mu''$, where $\mu'\in I^{k'}$,
$\mu''\in I^{k''}$, and $k'+k''=k$. As $\gamma=\mu_1+\mu'+\mu''$, we have 
$\mu_1+\mu'\in \Delta$ or $\mu_1+\mu''\in\Delta$, see Lemma~\ref{lem:tri-kornya}. 
By symmetry, it suffices to consider the first possibility.
Then we set $\tilde\mu_1=\mu_1+\mu'$, $\tilde\mu_2=\mu''$,  and $\tilde k=k''$.

Thus, one can gradually descend to the case $\tilde k=1$ and conclude using $\mathbf{(2)}$
that $\gamma\not\in \max(I)$.
\end{proof}

\begin{thm}  \label{thm:min-roots}
For $I\in \jdod$ and  $\gamma\in \Delta(1)$,
we have $\gamma\in \min(I^{\boldsymbol{c}})$ if and only if $w_{I,\text{max}}(\gamma)\in \Pi$.
 \end{thm}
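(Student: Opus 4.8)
The statement is the ``dual'' of Theorem~\ref{thm:max-roots}, with $w_{I,{\rm min}}$ replaced by $w_{I,{\rm max}}$, with $\max(I)$ replaced by $\min(I^{\boldsymbol c})$, and with $-\Pi$ replaced by $\Pi$; so the plan is to mimic the proof of Theorem~\ref{thm:max-roots} essentially verbatim, working with $N(w_{I,{\rm max}})=\Delta({\ge}1)\setminus\langle I^{\boldsymbol c}\rangle$ in place of $N(w_{I,{\rm min}})=\langle I\rangle$. Write $w$ for $w_{I,{\rm max}}$. The key characterisation to use is that $\gamma\in\Delta({\ge}1)$ lies in $N(w)$ iff $\gamma\notin\langle I^{\boldsymbol c}\rangle$; restricting to $\Delta(k)$, a root $\delta\in\Delta(k)$ lies in $N(w)$ iff $\delta\in \Delta(k)\setminus(I^{\boldsymbol c})^k$, and in particular $\gamma\in\Delta(1)$ satisfies $w(\gamma)\in-\Delta^+$ iff $\gamma\in I=\Delta(1)\setminus I^{\boldsymbol c}$, i.e. $w(\gamma)\in\Delta^+$ iff $\gamma\in I^{\boldsymbol c}$.

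\textbf{The easy direction.} Suppose $\gamma\in I^{\boldsymbol c}$ but $\gamma\notin\min(I^{\boldsymbol c})$. Then $\gamma=\gamma'+\delta$ for some $\gamma'\in I^{\boldsymbol c}$ and $\delta\in\Delta(0)^+$ (using that $I^{\boldsymbol c}\in\jdodp$ and the description of covering relations in $\Delta(1)$). Applying $w$: since $\gamma',\gamma\in I^{\boldsymbol c}\subset\Delta(1)$, both $w(\gamma')$ and $w(\gamma)$ are positive, so $w(\delta)=w(\gamma)-w(\gamma')$; but also $\delta\in\Delta(0)^+$ forces $w(\delta)\in\Delta^+$ since $w\in W^0$. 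That does not immediately give a contradiction, so instead I argue contrapositively in the same style as Theorem~\ref{thm:max-roots}: if $\gamma\in I^{\boldsymbol c}$ and $\gamma\notin\min(I^{\boldsymbol c})$, write $\gamma=\gamma'+\delta$ as above, note $w(\gamma)=w(\gamma')+w(\delta)$ is a sum of \emph{two positive roots}, hence $w(\gamma)\notin\Pi$.

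\textbf{The harder direction.} Conversely, assume $\gamma\in I^{\boldsymbol c}$ (so $w(\gamma)\in\Delta^+$) but $w(\gamma)\notin\Pi$; then $w(\gamma)=\delta_1+\delta_2$ with $\delta_i\in\Delta^+$, hence $\gamma=\mu_1+\mu_2$ with $\mu_i=w^{-1}(\delta_i)$. One of the $\mu_i$, say $\mu_2$, is positive; by $w\in W^0$ it cannot lie in $\Delta(0)^+$ (else $w(\mu_2)=\delta_2$ would not be positive — wait, it would be: the obstruction is rather that then $\mu_1\in\Delta(1)$ and one analyses covers). I will follow the trichotomy of Theorem~\ref{thm:max-roots}: (1) rule out $\mu_2\in\Delta(0)^+$ directly, handling the level-$0$ part as in case $\mathbf{(2)}$ there; (2) if $\mu_2\in\Delta(1)$, then since $w(\mu_2)=\delta_2$ is positive, $\mu_2\in I^{\boldsymbol c}$, while $\mu_1\in\Delta(0)$ and (again by $w\in W^0$) $\mu_1\in-\Delta(0)^+$, so $\gamma=\mu_1+\mu_2\curle\mu_2$ exhibits $\mu_2\in I^{\boldsymbol c}$ strictly below $\gamma$, whence $\gamma\notin\min(I^{\boldsymbol c})$; (3) if $\mu_2\in\Delta(k)$ with $k\ge2$, then $w(\mu_2)\in\Delta^+$ forces $\mu_2\in(I^{\boldsymbol c})^k$, so $\mu_2=\mu'+\mu''$ with $\mu'\in(I^{\boldsymbol c})^{k'}$, $\mu''\in(I^{\boldsymbol c})^{k''}$, $k'+k''=k$; by Lemma~\ref{lem:tri-kornya} one of $\mu_1+\mu'$, $\mu_1+\mu''$ is a root, and replacing $(\mu_1,\mu_2)$ by, say, $(\mu_1+\mu',\mu'')$ strictly lowers the level of the positive summand, so descent reduces to case (2).

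\textbf{Main obstacle.} The one point needing care is the analogue of case $\mathbf{(2)}$ and the sign bookkeeping in case $\mathbf{(1)}$: I must verify that in case (2) the level-$0$ root $\mu_1$ is genuinely \emph{negative} — equivalently $w(\mu_1)\in-\Delta^+$ — and that this is forced. This follows because $w(\mu_1)=w(\gamma)-w(\mu_2)=(\delta_1+\delta_2)-\delta_2=\delta_1\in\Delta^+$ would say $\mu_1\in\Delta^+\cap\Delta(0)=\Delta(0)^+$, contradicting $w\in W^0$ (which sends $\Delta(0)^+$ into $\Delta^+$ but $w(\mu_1)$ here is read off as $\delta_1$, positive — so actually $\mu_1\in\Delta(0)^+$ is consistent; the real constraint is that we need $\mu_1\notin\Delta(0)^+$, which instead I get from noting $\delta_1=w(\mu_1)$ and if $\mu_1\in\Delta(0)^+$ then $\gamma=\mu_1+\mu_2\curge\mu_2$, i.e. $\mu_2\curle\gamma$ with $\mu_2\in I^{\boldsymbol c}$, still giving $\gamma\notin\min(I^{\boldsymbol c})$). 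So in fact \emph{both} sub-cases $\mu_1\in\pm\Delta(0)^+$ lead to $\gamma\notin\min(I^{\boldsymbol c})$ via a comparability $\mu_2\curle\gamma$ or $\gamma\curle$ something — the argument is robust, and this is the only place where the dual proof diverges slightly in flavour from the original. Everything else is a routine transcription, so I would simply write ``The proof is completely parallel to that of Theorem~\ref{thm:max-roots}, with $w_{I,{\rm min}}$, $\langle I\rangle$, $I$, $\max$, $-\Pi$ replaced by $w_{I,{\rm max}}$, $\langle I^{\boldsymbol c}\rangle$, $I^{\boldsymbol c}$, $\min$, $\Pi$ respectively,'' and spell out only case (2) and the descent in case (3).
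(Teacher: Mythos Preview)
Your approach is exactly the paper's: the same easy direction, the same decomposition $w(\gamma)=\delta_1+\delta_2$, the same trichotomy on the level of $\mu_2$, and the same descent via Lemma~\ref{lem:tri-kornya}. However, your sign bookkeeping in cases (1) and (2) is inverted relative to the dual situation, and your ``Main obstacle'' paragraph does not fully repair it.

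Concretely: in case (2) you assert $\mu_1\in -\Delta(0)^+$ ``by $w\in W^0$'', mirroring Theorem~\ref{thm:max-roots}. But here $w(\mu_1)=\delta_1\in\Delta^+$, and $w\in W^0$ sends $-\Delta(0)^+$ into $-\Delta^+$; so $\mu_1\in -\Delta(0)^+$ is \emph{impossible}, not forced. The surviving sub-case is $\mu_1\in\Delta(0)^+$, giving $\mu_2\prec\gamma$ with $\mu_2\in I^{\boldsymbol c}$, hence $\gamma\notin\min(I^{\boldsymbol c})$. Your later claim that ``both sub-cases $\mu_1\in\pm\Delta(0)^+$ lead to $\gamma\notin\min(I^{\boldsymbol c})$'' is false: if $\mu_1$ were negative you would get $\gamma\prec\mu_2$, which says nothing about $\gamma$ being non-minimal. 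Likewise, case (1) with $\mu_2\in\Delta(0)^+$ is not ``ruled out'' here (as it was in Theorem~\ref{thm:max-roots}); rather $\mu_1\in\Delta(1)$ with $w(\mu_1)=\delta_1\in\Delta^+$ forces $\mu_1\in I^{\boldsymbol c}$ and $\mu_1\prec\gamma$, again giving the conclusion directly.

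So the duality swaps which sub-case is impossible and which gives the conclusion; once you straighten this out, your proof coincides with the paper's line for line.
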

\begin{proof} 
This proof is similar (and ``dual'') to the proof of Theorem~\ref{thm:max-roots}.
Write $w$ for $w_{I,{\rm max}}$ in this proof. Recall that $\gamma\in I^{\boldsymbol{c}}$ if and only if 
$w(\gamma)\in \Delta^+$.
 
\noindent
{\sf\bfseries (i)} \ If $\gamma\in I^{\boldsymbol{c}}\setminus \min(I^{\boldsymbol{c}})$, then $\gamma=\gamma'+\delta$ for some $\gamma'\in I^{\boldsymbol{c}}$ 
and $\delta\in \Delta(0)^+$. Then $w(\gamma)=w(\gamma')+w(\delta)$ is a sum of positive roots.

\noindent
{\sf\bfseries (ii)} \ Conversely, if $\gamma\in I^{\boldsymbol{c}}$ and $w(\gamma)\not\in \Pi$, then 
$w(\gamma)=\delta_1+\delta_2$, where 
$\delta_i\in\Delta^+$. Hence $w^{-1}(\delta_1)+w^{-1}(\delta_2)=\gamma\in \Delta(1)$. 
Set $\mu_i=w^{-1}(\delta_i)$. Without loss 
of generality, we may assume that $\mu_2$ is positive. Let us consider possible levels 
of $\mu_2$ and consequences of that for $\gamma$.

$\mathbf{(1)}$ \ Suppose that $\mu_2\in \Delta(0)^+$. Then $\mu_1\in \Delta(1)$ and 
$w(\mu_1)\in \Delta^+$. Hence $\mu_1\in I^{\boldsymbol{c}}$ and 
$\gamma=\mu_1+\mu_2\not\in\min(I^{\boldsymbol{c}})$.

$\mathbf{(2)}$ \ Suppose that $\mu_2\in \Delta(1)$. Then $\mu_2\in I^{\boldsymbol{c}}$ and $\mu_1\in\Delta(0)$. 

\quad {\bf --} \ If $\mu_1$ is positive, then again $\gamma=\mu_1+\mu_2\not\in\min(I^{\boldsymbol{c}})$.

\quad {\bf --} \ The case in which $\mu_1\in -\Delta(0)^+$ is impossible, since $w(\mu_1)=\delta_1$ and $w\in W^0$.

$\mathbf{(3)}$ \ Suppose that $\mu_2\in \Delta(k)$, $k\ge 2$. Let us show that there is another decomposition
$\gamma=\tilde\mu_1+\tilde\mu_2$ such that  $\tilde\mu_2\in \Delta(\tilde k)$ with $0<\tilde k<k$.
\par
Since $w(\mu_2)\in\Delta^+$, we have $\mu_2\in (I^{\boldsymbol{c}})^k$ by the very definition of $w=w_{I,{\rm max}}$. Hence, $\mu_2=\mu'+\mu''$, where $\mu'\in (I^{\boldsymbol{c}})^{k'}$,
$\mu''\in (I^{\boldsymbol{c}})^{k''}$, and $k'+k''=k$. As $\gamma=\mu_1+\mu'+\mu''$, we have 
$\mu_1+\mu'\in \Delta$ or $\mu_1+\mu''\in\Delta$, see Lemma~\ref{lem:tri-kornya}. 
By symmetry, it suffices to handle the first possibility.
Then we set $\tilde\mu_1=\mu_1+\mu'$, $\tilde\mu_2=\mu''$,  and $\tilde k=k''$.

Thus, one can gradually descend to the case $\tilde k=1$ and conclude using $\mathbf{(2)}$
that $\gamma\not\in \max(I^{\boldsymbol{c}})$.
\end{proof}

\section{Dominant chambers and arrangements of hyperplanes}
\label{sect:dcah}

\noindent
For $\gamma\in\Delta$, let $\eus H_\gamma$ be the hyperplane in $V$ orthogonal to $\gamma$.
Then $\eus A=\{\eus H_\gamma \mid \gamma\in\Delta^+\}$ is the  {\it Coxeter arrangement} 
associated with $\Delta$. The connected components of 
$V\setminus (\bigcup_{\gamma\in\Delta^+}\eus H_\gamma)$ are called (open) {\it chambers}. Each 
chamber 
is an open simplicial cone in $V$, and $W$ acts simply transitively on the set of chambers. 
The {\it dominant\/} open chamber is $\eus C^o=\{v\in V\mid (v,\ap)> 0 \ \ \forall \ap\in\Pi\}$. The 
closure of $\eus C^o$ is denoted by $\eus C$.
If $\eus K',\eus K''$ are two chambers, then the {\it distance\/} between them, $d(\eus K',\eus K'')$, is the 
number of hyperplanes in $\eus A$ that separate them. As is well known,
$d(\eus C, w(\eus C))=\ell(w)$. More precisely, the hyperplane $\eus H_\gamma$ separates $\eus C$ 
and $w(\eus C)$ if and only if $\gamma\in N(w^{-1})$, 
see \cite[Chap.\,VI, \S\,1, Prop.\,17]{bour}.

In this section, we will consider certain sub-arrangements of $\eus A_\Delta$ and their relationship to ideals/antichains in
the poset $\Delta(1)$. The first of them is $\eus A_\Delta(0)=\{\eus H_\gamma \mid \gamma\in\Delta(0)^+\}$, the Coxeter arrangement associated with $\Delta(0)$. The corresponding {\it big\/} dominant chamber is
$\eus C(0)^o=\{v\in V\mid (v,\ap)> 0\ \ \forall\ap\in\Pi(0)\}$ and its closure is denoted by $\eus C(0)$.
It follows readily from the definition of $W^0$
(see Eq.~\eqref{eq:w0-def}), that \ \ 
$w\in W^0$ {\sl if and only if\/} $w^{-1}(\eus C)\subset \eus C(0)$.
In particular, the big dominant chamber $\eus C(0)$ is the union of $\#W^0$ ``small'' chambers.

\begin{thm}   \label{thm:min-max-&-chambers} 
\leavevmode\par  \nopagebreak
{\sf\bfseries (i)} \ The hyperplanes $\eus H_\gamma$, $\gamma\in\Delta(1)$, dissect the cone 
$\eus C(0)$ into  
certain regions (cones) that are in a natural one-to-one correspondence with the ideals of 
$\Delta(1)$ 
({\normalfont and we write $\eus R^o_I$ for the open region corresponding to $I\in \jdod$});

{\sf\bfseries (ii)} \ if\/ $w\in W^0_{\rm min}$, then $w^{-1}(\eus C^o)$ is the unique small chamber in 
$\eus R^o_{I_w}$ that is closest to $\eus C^o$;

{\sf\bfseries (iii)} \ if\/ $w\in W^0_{\rm max}$, then $w^{-1}(\eus C^o)$ is the unique small chamber in 
$\eus R^o_{I_w}$ that is farthest from $\eus C^o$;
\end{thm}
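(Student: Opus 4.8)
The strategy is to translate everything about ideals and small chambers into statements about inversion sets and the weak Bruhat order, where the earlier results already give full control.

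For part (i), the plan is to partition $\eus C(0)$ according to which side of each hyperplane $\eus H_\gamma$, $\gamma\in\Delta(1)$, a point lies on. A point $v\in\eus C(0)^o$ that lies on no $\eus H_\gamma$ determines the subset $I(v)=\{\gamma\in\Delta(1)\mid (v,\gamma)<0\}$. First I would check that $I(v)$ is a lower ideal of $\Delta(1)$: if $\gamma\in I(v)$ and $\gamma'\curle\gamma$ with $\gamma-\gamma'$ a nonnegative combination of $\Pi(0)$, then since $v\in\eus C(0)$ we have $(v,\gamma-\gamma')\ge 0$, hence $(v,\gamma')\le(v,\gamma)<0$, so $\gamma'\in I(v)$. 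Conversely, given $I\in\jdod$, I would produce a point $v$ realising it — for instance using any small chamber $w^{-1}(\eus C^o)$ with $\tau(w)=I$, which exists by surjectivity of $\tau$ (Theorems~\ref{thm:biconvex-}, \ref{thm:biconvex+}): if $v\in w^{-1}(\eus C^o)$ then $(v,\gamma)<0\iff(w(v),w(\gamma))<0\iff w(\gamma)\in-\Delta^+\iff\gamma\in N(w)\iff\gamma\in I_w=I$. This shows the map $v\mapsto I(v)$ is onto $\jdod$ and constant on each connected component of $\eus C(0)^o\setminus\bigcup_{\gamma\in\Delta(1)}\eus H_\gamma$; that each fibre is a single (convex, hence connected) region follows because the fibre is the intersection of $\eus C(0)^o$ with the open half-spaces $\{(v,\gamma)<0\}_{\gamma\in I}$ and $\{(v,\gamma)>0\}_{\gamma\notin I}$. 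Define $\eus R^o_I$ to be this region.

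For parts (ii) and (iii), the key observation is that a small chamber $w^{-1}(\eus C^o)$ (with $w\in W^0$) lies inside $\eus R^o_I$ precisely when $I_w=\tau(w)=I$, i.e. exactly when $w\in\tau^{-1}(I)$. By Theorem~\ref{thm:interval-weak-Br}, $\tau^{-1}(I)=\{w\in W^0\mid w_{I,\min}\le w\le w_{I,\max}\}$, an interval in the weak order with unique minimum $w_{I,\min}$ and unique maximum $w_{I,\max}$. Now the distance from $\eus C^o$ to a small chamber $w^{-1}(\eus C^o)$ is $d(\eus C,w^{-1}(\eus C))=\ell(w^{-1})=\ell(w)=\#N(w)$, and $v\le v'$ in the weak order means $N(v)\subset N(v')$, hence $\ell(v)\le\ell(v')$. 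Therefore among all small chambers in $\eus R^o_I$, the one corresponding to $w_{I,\min}$ has strictly smallest $\ell$ (strict, since the interval has a unique minimum and $N(w_{I,\min})\subsetneq N(w)$ for any other $w$ in the fibre), so it is the unique closest one; symmetrically $w_{I,\max}$ gives the unique farthest one. Since $W^0_{\min}=\{w_{I,\min}\mid I\in\jdod\}$ and $W^0_{\max}=\{w_{I,\max}\mid I\in\jdod\}$ by Definition~\ref{def:min-&-max}, this is exactly the assertion.

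The main obstacle I anticipate is part (i): one must be careful that "the hyperplanes dissect $\eus C(0)$ into regions indexed by $\jdod$" is not merely saying that the fibres of $v\mapsto I(v)$ are nonempty, but that each fibre is a single connected region of the dissection — this needs the convexity argument above, together with the observation that no region is "missing", which is where surjectivity of $\tau$ (equivalently, the existence of small chambers realising every ideal, via Theorems~\ref{thm:biconvex-} and \ref{thm:biconvex+}) does the work. The distance computation in (ii)–(iii) is then routine given $d(\eus C,w(\eus C))=\ell(w)$ and the weak-order description of the fibre; the only subtlety is extracting \emph{strictness} (uniqueness of the closest/farthest chamber) from the fact that $w_{I,\min}$, $w_{I,\max}$ are the \emph{unique} extreme elements of the interval, which forces the inversion-set inclusions to be strict for every other element of the fibre.
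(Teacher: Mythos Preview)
Your proposal is correct and follows essentially the same route as the paper: define the region $\eus R_I^o$ by sign conditions, use surjectivity of $\tau$ (from Theorems~\ref{thm:biconvex-} and \ref{thm:biconvex+}) to see each region is nonempty and contains the small chambers $w^{-1}(\eus C^o)$ with $\tau(w)=I$, and then read off (ii)--(iii) from the uniqueness of the minimal and maximal length elements in $\tau^{-1}(I)$ together with $d(\eus C,w^{-1}(\eus C))=\ell(w)$. Your write-up is in fact slightly more explicit than the paper's in two places: you verify directly that $I(v)$ is a lower ideal (the paper bypasses this by observing that the small chambers already cover $\eus C(0)$ and each sits in some $\eus R_I^o$), and you spell out the convexity of each $\eus R_I^o$; neither addition changes the underlying argument.
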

\begin{proof}
(i) \ Given $I\in \jdod$, define the open region (cone), $\eus R_I^o$, corresponding to $I$ as follows:
\[
    \eus R_I^o=\{x\in \eus C(0)^o\mid (x,\gamma)>0 \text{ if } \gamma\not\in I \ \& \ 
    (x,\gamma)<0 \text{ if } \gamma\in I \}.
\]
Using the fact that $\tau: W^0\to \jdod$ is onto, one immediately obtains that 
$\eus R_I^o\ne \varnothing$ for any $I$. Indeed, if $\tau(w)=I$, then
$\eus H_\gamma$ ($\gamma\in\Delta(1)$) separates $\eus C^o$ and $w^{-1}(\eus C^o)$
if and only if $\gamma\in N(w)\cap\Delta(1)=I$. Therefore, $w^{-1}(\eus C^o)\subset \eus R_I^o$.
Furthermore, any chamber $w^{-1}(\eus C^o)$, $w\in W^0$, belongs to  some region $\eus R_I^o$, which 
means that the closed regions $\eus R_I$ ($I\in \jdod$)
exhaust the big dominant chamber $\eus C(0)$.

(ii),(iii) \ This follows from (i) and the fact that $w_{I,\min}$ (resp. $w_{I,\max}$) is the
unique element of minimal (resp. maximal) length in $\tau^{-1}(I)$.
\end{proof}

These properties suggest to consider the sub-arrangement $\eus A_\Delta(0,1)$ of $\eus A_\Delta$ that 
contains only the hyperplanes $\eus H_\gamma$ corresponding to $\gamma\in \Delta(0)^+\cup\Delta(1)$. 
Set $\eta_i=\#\{\gamma\in \Delta(0)^+\cup\Delta(1)\mid\hot(\gamma)=i\}$ and consider the associated
sequence $\mathcal P(0,1)=(\eta_1,\eta_2,\dots )$.

\begin{lm}   \label{lem:partition}
The sequence  $\mathcal P(0,1)$ is a partition, i.e., 
$\eta_1\ge \eta_2\ge \dots$. In addition,
$\eta_1>\eta_2$.
\end{lm}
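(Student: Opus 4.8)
The plan is to establish the partition property by exhibiting, for each $i \ge 1$, an injection from the set of roots in $\Delta(0)^+ \cup \Delta(1)$ of height $i+1$ into the set of such roots of height $i$. A root $\gamma$ of height $i+1$ lying in $\Delta(0)^+ \cup \Delta(1)$ must have a simple root $\ap \in \Pi$ with $\gamma - \ap \in \Delta^+$ and $[\gamma:\ap] > 0$; one then wants to choose such an $\ap$ so that $\gamma - \ap$ again lies in $\Delta(0)^+ \cup \Delta(1)$. For this, observe that the grading values behave additively: if $\gamma \in \Delta(0)$ then $\gamma - \ap \in \Delta(0)$ whenever $\ap \in \Pi(0)$, and if $\gamma \in \Delta(1)$ then $\gamma - \ap \in \Delta(0) \cup \Delta(1)$ for \emph{any} $\ap \in \Pi$ (since $\ap \in \Pi(0) \cup \Pi(1)$ forces $\gamma - \ap \in \Delta(1) \cup \Delta(0)$, and no negative levels occur below level $1$). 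So the only danger is a root $\gamma \in \Delta(0)^+$ all of whose "downward" simple roots lie in $\Pi(1)$; but then $\gamma$ would have a positive $\Pi(1)$-coefficient, contradicting $\gamma \in \Delta(0)$. Hence a valid downward step inside $\Delta(0)^+ \cup \Delta(1)$ always exists. To make the map well-defined and injective, I would fix a total order on $\Pi$ and always remove the \emph{largest} admissible simple root; then reconstructing $\gamma$ from $\gamma - \ap$ forces a unique $\ap$, exactly as in the classical argument that $(\Delta^+, \curle)$ has a "staircase" height distribution. Actually, the cleanest route is to invoke the result of Appendix~\ref{app:A} (the case-free proof that the height sequence attached to an upper ideal of $\Delta^+$ is a partition): since $\Delta(0)^+ \cup \Delta(1) = \Delta^+ \setminus \langle \Delta(2)\rangle$ wait — more precisely $\Delta({\ge}2)$ is the upper ideal $\langle \Delta(2)\rangle$ of $(\Delta^+,\curle)$, and $\Delta(0)^+ \cup \Delta(1)$ is its complementary lower ideal, so $\mathcal P(0,1)$ is the height sequence of a \emph{lower} ideal, equivalently the partition of the Appendix subtracted from the full height partition of $\Delta^+$. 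This identifies $\mathcal P(0,1)$ with a difference of two partitions sharing the same first part behaviour; I would check directly that this difference is still weakly decreasing using the interlacing already proved in the Appendix.

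For the strict inequality $\eta_1 > \eta_2$, note $\eta_1 = \#\Pi$, since the height-$1$ roots in $\Delta(0)^+ \cup \Delta(1)$ are exactly all of $\Pi$ (every simple root lies in $\Pi(0)$ or $\Pi(1)$). On the other hand $\eta_2$ counts the roots $\ap_i + \ap_j \in \Delta$ (with $i \ne j$, or $2\ap_i$ if that were a root, which it never is) that lie in $\Delta(0) \cup \Delta(1)$, i.e.\ with total $\Pi(2)$-and-higher content zero — but at level $\le 1$ this is automatic, so $\eta_2 = \#\{\{i,j\} : \ap_i + \ap_j \in \Delta\}$, which equals the number of edges of the Dynkin diagram of $\Delta$. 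Since the Dynkin diagram of an irreducible root system on $n$ nodes is a tree (hence has $n - 1$ edges) or, in the non-simply-laced cases, still a tree with $n-1$ edges when multiple bonds are counted once, we get $\eta_2 = n - 1 < n = \eta_1$.

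The main obstacle I anticipate is the bookkeeping in the injective step: one must verify that the "remove the largest admissible simple root" rule genuinely stays inside $\Delta(0)^+ \cup \Delta(1)$ and that the resulting map is injective, which requires knowing that from $\mu = \gamma - \ap$ one can recover $\ap$ — this is where one needs a lemma (essentially the content of Appendix~\ref{app:A}, or Lemma~\ref{lem:tri-kornya} applied repeatedly) guaranteeing that if $\mu + \ap$ and $\mu + \beta$ are both roots with $\ap, \beta \in \Pi$ then the choice rule is consistent. If invoking the Appendix directly, the obstacle instead becomes the clean identification of $\mathcal P(0,1)$ as a difference of two partitions and checking that interlacing survives subtraction; I expect the latter to be the shorter path, so I would write the proof that way, citing Lemma~\ref{lem:tri-kornya} for the one place where additivity of levels under subtracting a simple root needs justification, and deriving $\eta_1 > \eta_2$ from the Dynkin-diagram-is-a-tree count as above.
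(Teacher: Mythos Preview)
Your instinct to invoke the Appendix is exactly right and is precisely what the paper does, but you have misread what Proposition~\ref{prop:appendix} actually asserts. It does \emph{not} attach a partition to the upper ideal $\mathcal I$ itself; the sequence $(\lb_i)$ there is defined by $\lb_i = \#\{\gamma \in \mathcal I^c : \hot(\gamma) = i\}$, i.e.\ it is the height sequence of the \emph{complement} $\mathcal I^c = \Delta^+ \setminus \mathcal I$. Once you observe (as you correctly do) that $\Delta({\ge}2)$ is an upper ideal of $(\Delta^+, \curle)$ whose complement is exactly $\Delta(0)^+ \cup \Delta(1)$, the lemma is \emph{immediate}: $\eta_i = \lb_i$ by definition, and there is nothing to subtract. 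Your proposed ``difference of two partitions'' detour is both unnecessary and unsound---a difference of partitions need not be a partition, and the height sequence of an upper ideal is not itself a partition in any useful sense.

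Your Dynkin-tree count for $\eta_1 > \eta_2$ is essentially the paper's parenthetical remark (for standard gradings $\eta_1 = n$ since all of $\Pi$ lies in $\Delta(0)^+\cup\Delta(1)$), but it becomes redundant once the Appendix is applied correctly, since Proposition~\ref{prop:appendix} already yields $\lb_1 > \lb_2$ whenever $\mathcal I \ne \Delta^+$. A minor slip: your exact equality $\eta_2 = n-1$ can fail for $k$-standard gradings with $k\ge 2$, since two adjacent simple roots both in $\Pi(1)$ sum to a root in $\Delta(2)$; only $\eta_2 \le n-1$ is guaranteed. The combinatorial injection you sketch first could in principle be made to work, but it is the hard way around; the paper bypasses it entirely via the $\ad(e)$-surjectivity argument of the Appendix.
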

\begin{proof}
This is a particular case of a more general observation, see \cite[Prop.\,3.1]{som-tym}. However, that
proof consists of a reference to case-by-case and computer computations. For this reason, we provide
a general case-free proof in the Appendix, see Proposition~\ref{prop:appendix}.

Note also that, for the standard gradings, the inequality $\eta_1>\eta_2$ readily stems from the fact that $\Delta(0)^+\cup\Delta(1)$ contains all simple roots, i.e., $\eta_1=\rk\Delta$.
\end{proof}

\begin{conj}   \label{conj:free-arr}
The arrangement $\eus A_\Delta(0,1)$ is free and its exponents are given by the dual partition 
$\mathcal P(0,1)^{\boldsymbol t}$ to $\mathcal P(0,1)$.
\end{conj}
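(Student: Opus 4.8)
The plan is to recognise $\eus A_\Delta(0,1)$ as the hyperplane arrangement attached to a \emph{lower ideal of the root poset} $(\Delta^+,\curle)$ and then to invoke the (known cases of the) Sommers--Tymoczko freeness conjecture. First I would observe that
\[
   \Delta(0)^+\cup\Delta(1)=\Delta^+\setminus\Delta({\ge}2),
\]
and that $\Delta({\ge}2)$ is an upper (ad-nilpotent) ideal of $(\Delta^+,\curle)$: if $\gamma\in\Delta({\ge}2)$, $\mu\in\Delta^+$ and $\gamma+\mu\in\Delta$, then $\gamma+\mu\in\Delta({\ge}2)$ since the grading is compatible. Hence $\Delta(0)^+\cup\Delta(1)$ is the complementary lower ideal, $\mathcal P(0,1)$ is exactly its height sequence (a partition, by Lemma~\ref{lem:partition}), and $\eus A_\Delta(0,1)$ is the associated ideal arrangement. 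With this identification, Conjecture~\ref{conj:free-arr} is precisely the instance of the Sommers--Tymoczko freeness conjecture for this ideal; by \cite[Theorem\,11.1]{som-tym} it therefore holds when $\Delta$ is classical or of type $\GR{G}{2}$, and since freeness and the exponent multiset are multiplicative over irreducible components (a reducible $\Delta$ yielding a product arrangement, with height sequences adding componentwise and conjugates combining as multisets), the only outstanding cases are $\GR{F}{4}$, $\GR{E}{6}$, $\GR{E}{7}$, $\GR{E}{8}$.

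For these I would run an induction along the height filtration by means of Terao's Addition--Deletion Theorem. Put $\eus A^{\le j}=\{\eus H_\gamma\mid\gamma\in\Delta(0)^+\cup\Delta(1),\ \hot(\gamma)\le j\}$, so that $\eus A^{\le 1}\subset\eus A^{\le 2}\subset\cdots$ is a chain terminating at $\eus A_\Delta(0,1)$, with exactly $\eta_j$ new hyperplanes entering at step $j$. The base $\eus A^{\le 1}$ consists of the $\eta_1=\#(\Pi(0)\cup\Pi(1))$ hyperplanes orthogonal to the simple roots lying in $\Delta(0)^+\cup\Delta(1)$ (so $\eta_1=\rk\Delta$ for a standard grading), an essentially Boolean arrangement, hence free with exponents $(1^{\eta_1},0^{\,\rk\Delta-\eta_1})$. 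The target exponents are the parts of $\mathcal P(0,1)^{\boldsymbol t}$ (padded by zeros); note $\sum_i\bigl(\mathcal P(0,1)^{\boldsymbol t}\bigr)_i=\#(\Delta(0)^+\cup\Delta(1))=\#\eus A_\Delta(0,1)$, which is forced by Terao's identity, and I would first perform a consistency check by computing the characteristic polynomial of $\eus A_\Delta(0,1)$, say by a finite-field point count, and verifying that it equals $\prod_i\bigl(t-(\mathcal P(0,1)^{\boldsymbol t})_i\bigr)$ — this is necessary by Terao's factorisation theorem, though not sufficient for freeness.

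The inductive step is the heart of the matter. Assuming $\eus A^{\le j-1}$ is free with the exponent multiset dictated by $(\eta_1,\dots,\eta_{j-1})$, one adds the $\eta_j$ height-$j$ hyperplanes one at a time and applies Addition--Deletion; equivalently, one verifies the hypotheses of a Multiple Addition Theorem at level $j$, namely that $\bigcap_{\hot(\gamma)=j}\eus H_\gamma$ has codimension $\eta_j$ in $V$ and lies in no member of $\eus A^{\le j-1}$, and that $\eus A^{\le j-1}$ has at least $\eta_j$ exponents equal to $j-1$; granting this, $\eus A^{\le j}$ is free with those $\eta_j$ exponents raised from $j-1$ to $j$. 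Because $\eta_1\ge\eta_2\ge\cdots$ by Lemma~\ref{lem:partition}, the exponents incremented at each stage are precisely the ``last'' $\eta_j$ of them, and carrying this through all heights converts the sequence $\mathcal P(0,1)$ into its conjugate $\mathcal P(0,1)^{\boldsymbol t}$, as required.

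The main obstacle, I expect, is exactly this inductive step — more precisely the control of the restriction arrangements $(\eus A^{\le j})^{\eus H_\gamma}$, equivalently the transversality/codimension statement for the height-$j$ roots of the ideal together with the identification of each such restriction as again an ideal-type arrangement for a root subsystem. This is the point at which the argument ceases to be formal: in the Sommers--Tymoczko programme it is exactly this restriction behaviour that resisted a uniform root-theoretic treatment, and its resolution needed substantial localisation machinery rather than a direct root-system computation. A secondary, bookkeeping obstacle is to reconcile the incremented exponent multiset with the conjugate partition at each stage; this is where Lemma~\ref{lem:partition} is used. Two sanity checks: for an abelian grading $\Delta({\ge}2)=\varnothing$, so $\eus A_\Delta(0,1)=\eus A_\Delta$ is the Coxeter arrangement, free by \cite[Ch.\,6]{OS}; and for the extra-special gradings the filtration is short enough that the induction should be executable by hand, which would in particular recover the identity $\#\jdod=\prod_{\gamma\in\Delta(1)}\frac{\hot(\gamma)+1}{\hot(\gamma)}$ from $\chi(\eus A_\Delta(0,1),-1)$ via Zaslavsky's formula and $W(0)$-symmetry.
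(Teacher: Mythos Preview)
The statement is labelled a \emph{conjecture} in the paper, and the paper does not prove it. What the paper does is exactly your first paragraph: observe that $\Delta({\ge}2)$ is an upper ideal of $(\Delta^+,\curle)$, so that $\eus A_\Delta(0,1)$ is the ideal arrangement attached to its complement, and then quote \cite[Theorem\,11.1]{som-tym} to conclude the conjecture holds for classical $\Delta$ and $\GR{G}{2}$. On that part you and the paper coincide; there is no further ``paper's proof'' to compare with.

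Where you go beyond the paper is the height-filtration/Addition--Deletion plan for $\GR{F}{4},\GR{E}{6},\GR{E}{7},\GR{E}{8}$. This is the right idea in spirit --- it is essentially the strategy that was eventually made to work (the Multiple Addition Theorem approach of Abe--Barakat--Cuntz--Hoge--Terao) --- but as written it is a sketch, not a proof, and you say so yourself. Two concrete points. First, your list of MAT hypotheses omits the key one: for each new hyperplane $\eus H_\gamma$ one needs $\#\eus A^{\le j-1}-\#(\eus A^{\le j-1})^{\eus H_\gamma}$ to equal the current top exponent; the codimension and non-containment conditions you state are the easy part. That restriction-size condition is precisely the ``control of the restriction arrangements'' you flag as the obstacle, and it is the substance of the theorem, not bookkeeping. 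Second, the assertion that for the extra-special grading ``the induction should be executable by hand'' is true but unnecessary: there $\eus A_\Delta(0,1)=\eus A_\Delta\setminus\{\eus H_\theta\}$ is the deleted Coxeter arrangement, whose freeness and exponents $(m_1,\dots,m_{n-1},m_n{-}1)$ are classical (the paper records this in Example~\ref{ex:(0,1)-extra-spec}).

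In short: your reduction matches the paper's, your further plan is reasonable but does not close the gap, and the paper does not claim to close it either.
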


This is a special case of a general conjecture discussed in~\cite{som-tym}. Namely, let $\cI\subset \Delta^+$
be an arbitrary upper ideal and $\eus A_\Delta(\cI^{\boldsymbol c})=\{\mathcal H_\gamma \mid \gamma\not\in\cI\}\subset \eus A_\Delta$. {Sommers} and {Tymoczko} conjecture that the arrangement $\eus A_\Delta(\cI^{\boldsymbol c})$ is free and its exponents are given by the dual partition to $(\lb_1,\lb_2,\dots)$, where
$\lb_i=\#\{\gamma\in \Delta^+\setminus \cI\mid \hot(\gamma)=i\}$. (The string $(\lb_1,\lb_2,\dots)$ is
really a partition, see Proposition~\ref{prop:appendix}.) By
\cite[Theorem\,11.1]{som-tym}, this general conjecture, and thereby Conjecture~\ref{conj:free-arr}, are 
true if $\Delta$ is of type
$\GR{A}{n},\GR{B}{n},\GR{C}{n},\GR{D}{n}$,  and $\GR{G}{2}$.
Using this conjecture, one derives a closed formula for the number of lower ideals (antichains ) in
$\Delta(1)$.

\begin{thm}   \label{thm:number-anod}
It follows from Conjecture~\ref{conj:free-arr}  that
\beq   \label{eq:chislo-ide}
    \#\bigl(\jdod\bigr)= \#\anod=\prod_{\gamma\in\Delta(1)}\frac{\hot(\gamma)+1}{\hot(\gamma)} .
\eeq
\end{thm}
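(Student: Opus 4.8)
The plan is to combine the freeness statement of Conjecture~\ref{conj:free-arr} with the factorisation theorem for the characteristic polynomial of a free arrangement, and then compare coefficients at a suitable value. Recall that if an arrangement $\eus A$ in $V$ (with $\dim V=n$) is free with exponents $(e_1,\dots,e_n)$, then its characteristic polynomial factors as $\chi(\eus A,t)=\prod_{i=1}^{n}(t-e_i)$; this is Terao's factorisation theorem \cite[Ch.\,4]{OS}. Applying this both to the full Coxeter arrangement $\eus A_\Delta$ (whose exponents are the usual exponents $m_1,\dots,m_n$ of $W$) and to $\eus A_\Delta(0,1)$ (whose exponents, by Conjecture~\ref{conj:free-arr}, are the parts of the dual partition $\mathcal P(0,1)^{\boldsymbol t}$), we obtain two product formulas for the respective characteristic polynomials.

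The next step is to read off the number of ideals as a value of the characteristic polynomial. By Theorem~\ref{thm:min-max-&-chambers}(i), the hyperplanes $\eus H_\gamma$ with $\gamma\in\Delta(1)$ cut the big dominant chamber $\eus C(0)$ into exactly $\#\jdod$ open regions. Equivalently — since $\eus C(0)$ is itself a chamber of $\eus A_\Delta(0)$ and the regions of $\eus A_\Delta(0,1)$ inside $\eus C(0)$ are exactly those we are counting — the total number of chambers of $\eus A_\Delta(0,1)$ is $\#W(0)$ times $\#\jdod$ (the Weyl group $W(0)$ acts simply transitively on the chambers of $\eus A_\Delta(0)$, each of which contains the same number $\#\jdod$ of regions). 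By Zaslavsky's theorem, the number of chambers of a real arrangement $\eus A$ equals $(-1)^n\chi(\eus A,-1)$. Hence
\[
   \#W(0)\cdot\#\jdod=(-1)^n\chi\bigl(\eus A_\Delta(0,1),-1\bigr)
   =\prod_{i=1}^{n}\bigl(1+e_i\bigr),
\]
where $(e_1,\dots,e_n)$ are the exponents of $\eus A_\Delta(0,1)$. Doing the same for the full Coxeter arrangement gives $\#W=\prod_{i=1}^{n}(1+m_i)$, which is the classical Kostant--Macdonald count, and by \eqref{eq:KM-ident} this equals $\prod_{\gamma\in\Delta^+}\frac{\hot(\gamma)+1}{\hot(\gamma)}$; similarly $\#W(0)=\prod_{\gamma\in\Delta(0)^+}\frac{\hot_0(\gamma)+1}{\hot_0(\gamma)}$, where $\hot_0$ is height computed inside $\Delta(0)$.

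The final, and main, step is the purely combinatorial identity relating the two product formulas. One must show
\[
   \prod_{i=1}^{n}(1+e_i)\Big/\#W(0)
   =\prod_{\gamma\in\Delta(1)}\frac{\hot(\gamma)+1}{\hot(\gamma)} .
\]
Here the key input is the description of the exponents: $e_i=\#\{j:\eta_j\ge i\}-1$ read off from the dual partition $\mathcal P(0,1)^{\boldsymbol t}$, where $\eta_j=\#\{\gamma\in\Delta(0)^+\cup\Delta(1)\mid\hot(\gamma)=j\}$ (Lemma~\ref{lem:partition} guarantees this is a partition, so the dual is well-defined and $\eus C(0)$ being a genuine chamber of $\eus A_\Delta(0)$ matches $e_1=n$, i.e.\ $\eta_1=n$ in the standard case). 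The combinatorial heart is the elementary but slightly fiddly identity for a partition $\mu=(\mu_1\ge\mu_2\ge\cdots)$ with dual $\mu^{\boldsymbol t}=(\mu^{\boldsymbol t}_1\ge\cdots)$, namely $\prod_i(\mu^{\boldsymbol t}_i)! \cdot \text{(shift)} $ versus $\prod_j (\text{height-}j\text{ count factorials})$; concretely, for any finite multiset $H$ of positive integers with $h_j=\#\{x\in H: x=j\}$ a weakly decreasing sequence, one has $\prod_{x\in H}\frac{x+1}{x}=\frac{\prod_i (1+ (h^{\boldsymbol t}_i))}{1}$ suitably interpreted — I would phrase it as: $\prod_{x\in H}\frac{x+1}{x}$ telescopes within each "column'', and matching columns of the Young diagram of $H$ (rows indexed by height value, columns by multiplicity rank) against those of $\mathcal P(0,1)$ versus $\Delta(0)^+$ gives the cancellation $\prod_{\gamma\in\Delta(0)^+\cup\Delta(1)}\frac{\hot(\gamma)+1}{\hot(\gamma)}$ on one side matching $\prod_i\frac{e_i+1}{?}$ on the other. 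Since $\Delta(0)^+\subset\Delta(0)^+\cup\Delta(1)$ as the "first rows'', the ratio of the two Kostant--Macdonald-type products is exactly the product over $\Delta(1)$, provided one checks that $\hot$ and $\hot_0$ agree on $\Delta(0)^+$ (true because simple roots of $\Delta(0)$ are a subset of $\Pi$ and roots of $\Delta(0)$ involve only those). I expect this last bookkeeping — aligning the three Young-diagram pictures ($\Delta^+$, $\Delta(0)^+$, and the partition $\mathcal P(0,1)$) and verifying the telescoping cancels correctly — to be the only real obstacle; everything else is invocation of Terao, Zaslavsky, and Kostant--Macdonald.
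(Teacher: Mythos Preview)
Your outline is essentially the paper's proof: Terao's factorisation plus Zaslavsky give $\prod_i(e_i+1)$ chambers for $\eus A_\Delta(0,1)$; dividing by $\#W(0)$ counts the regions inside $\eus C(0)$; and Theorem~\ref{thm:min-max-&-chambers}(i) identifies those regions with $\jdod$. The only comment is that you have made the ``final, main step'' much harder than it is. The identity you need is the one-line telescoping
\[
   \prod_{i}(e_i+1)=\prod_{j\ge 1}(j+1)^{\eta_j-\eta_{j+1}}
   =\prod_{j\ge 1}\Bigl(\frac{j+1}{j}\Bigr)^{\eta_j}
   =\prod_{\gamma\in\Delta(0)^+\cup\Delta(1)}\frac{\hot(\gamma)+1}{\hot(\gamma)},
\]
valid for any partition $(\eta_1\ge\eta_2\ge\cdots)$ with dual $(e_1,e_2,\ldots)$; then Kostant--Macdonald for $W(0)$ cancels the $\Delta(0)^+$ factors, leaving the product over $\Delta(1)$. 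There is no need to bring in the full Coxeter arrangement $\eus A_\Delta$, the product for $\#W$, or a separate height function $\hot_0$: for a compatible grading every $\gamma\in\Delta(0)^+$ is supported on $\Pi(0)\subset\Pi$, so $\hot$ already computes the $\Delta(0)$-height, and the ``three Young diagrams'' collapse to one.
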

\begin{proof}
Let  $b_1,\dots,b_n$ be the exponents of the free arrangement $\eus A_\Delta(0,1)$. By the factorisation 
result of Terao (see \cite[Theorem\,4.137]{OS},
the characteristic polynomial of $\eus A_\Delta(0,1)$ is $\chi_{(0,1)}(t)=\prod_{i=1}^n (t-b_i)$.
By a theorem of Zaslavsky \cite{zasl}, the total number of regions of  $\eus A_\Delta(0,1)$ equals
$(-1)^n\chi_{(0,1)}(-1)=\prod_{i=1}^n (b_i+1)$.
By definition of the dual partition, if $\eta_i=\#\{\gamma\in \Delta(0)^+\cup \Delta(1)\mid \hot(\gamma)=i\}$, then $\eta_i-\eta_{i+1}$ is the number of exponents that are equal to $i$.
Therefore,
\[
   \prod_{i=1}^n (b_i+1)=\prod_{\gamma\in \Delta(0)^+\cup \Delta(1)}\frac{\hot(\gamma)+1}{\hot(\gamma)}. 
\]
Since the arrangement $\eus A_\Delta(0,1)$ is $W(0)$-invariant and $\eus C(0)$ is a fundamental domain for
the $W(0)$-action, the number of regions inside $\eus C(0)$
equals $\prod_{i=1}^n (b_i+1)/\#W(0)$. On the other hand, the  
Kostant-Macdonald identity~\eqref{eq:KM-ident} implies that 
$\#W(0)=\prod_{\gamma\in \Delta(0)^+}\frac{\hot(\gamma)+1}{\hot(\gamma)}$. Combining all these 
formulae, we conclude that the number of  regions of $\eus A_\Delta(0,1)$ inside $\eus C(0)$
equals $\prod_{\gamma\in \Delta(1)}\frac{\hot(\gamma)+1}{\hot(\gamma)}$. Finally, by Theorem~\ref{thm:min-max-&-chambers}(i), the last number also gives the number of antichains (ideals) in $\Delta(1)$. 
\end{proof}

\begin{rmk}
Formula~\eqref{eq:chislo-ide} for $\#\anod$ appears already in \cite{ja} as a consequence of a 
general conjectural formula for $\mdt$~\cite[Conj.\,5.1]{ja}. Now, our theory of minimal/maximal elements 
in $W^0$, a relationship to arrangements, and partial results of \cite{som-tym} allow us to conclude 
that~\eqref{eq:chislo-ide} holds for all classical cases and $\GR{G}{2}$. However, the present approach does not provide new information on $\mdt$, because there seems to be no relationship between  the arrangement $\eus A_\Delta(0,1)$ and the rank-generating function $\mdt$.
\end{rmk}
\begin{ex}    \label{ex:(0,1)-abelian}
In the abelian case, we have $\eus A_\Delta(0,1)=\eus A_\Delta$ and the exponents of the Coxeter 
arrangement $\eus A_\Delta$ are the usual exponents of the Weyl group $W$~\cite[Theorem~6.60]{OS}. 
Hence $\chi_{\eus A_\Delta}(t)=\prod_{i=1}^n (t-m_i)$ and
$(-1)^n\chi_{\eus A_\Delta}(-1)=\prod_{i=1}^n (m_i+1)=\# W$,  as required.
\end{ex}

\noindent
As usual, we arrange the exponents in the non-decreasing order:
$1=m_1\le m_2\le\dots\le m_n=h-1$. If $n\ge 2$, then $m_1<m_2$ and $m_{n-1}< m_n$.

\begin{ex}    \label{ex:(0,1)-extra-spec}
In the extra-special case, $W(0)$ is the stabiliser of $\theta$ and 
$\eus A_\Delta(0,1)$ is just the {\it deleted arrangement\/} 
$\eus A'=\eus A_\Delta\setminus \eus H_\theta$. It is known that $\eus A'$ is free and the exponents of 
$\eus A'$ are $m_1,\dots,m_{n-1},m_n-1$ (combine Theorems~4.51 and 6.104 in~\cite{OS}).
Therefore $(-1)^n \chi_{\eus A'}(-1)=(m_1+1)\dots (m_{n-1}+1)m_n=\# W{\cdot}\frac{h-1}{h}$. Since
$\# W/\# W(0)$ is the number of long roots in $\Delta$, the number of the $W(0)$-dominant 
regions of $\eus A'$ is 
\[
   \frac{\# W}{\# W(0)}\cdot \frac{h-1}{h}=\#\Pi_l{\cdot}h\cdot \frac{h-1}{h}=\#\Pi_l{\cdot}(h-1),
\]
which is the number of antichains in $\Delta(1)$. This was computed earlier in \cite[Section\,4]{ja}, see
also Example~\ref{ex:extra-spe}.
\end{ex}

\begin{ex}   \label{ex:E7-ap7}
For the $1$-standard $\BZ$-grading of $\g=\GR{E}{7}$ with $\Pi(1)=\{\ap_7\}$, we have
$\g(0)\simeq \mathfrak{gl}(7)$ and  $\g(1)=\wedge^3(\BC^7)$ is the third fundamental representation.
Here the numbering of $\Pi$ follows \cite[Tables]{t41}. Then
\[
   \mathcal P(0,1)=(7,6^4,5^2,4^2,3,2,1,1) \ \text{ and } \ \mathcal P(0,1)^{\boldsymbol t}=(13,11,10,9,7,5,1) . 
\]
Therefore, the conjectural exponents of $\eus A_\Delta(0,1)$ are
$1,5,7,9,10,11,13$ and then the number of lower ideals in $\Delta(1)$ is $252$.
\end{ex}

\section{Affine versus finite theory}
\label{sect:versus}

\noindent
In this section, we compare the theory of upper (or ad-nilpotent) 
ideals of $\Delta^+$ (the {\sl affine theory}) and our theory of lower ideals in $\Delta(1)$ related to a 
$\BZ$-grading of $\Delta$ (the {\sl finite theory}).

We begin with the necessary notation. 
Recall that $V=\oplus_{i=1}^n{\BR}\ap_i$ and 
$(\ ,\ )$ is a $W$-invariant inner product on $V$. As usual,
$\mu^\vee=2\mu/(\mu,\mu)$ is the coroot
for $\mu\in \Delta$ and $\mathcal Q^\vee=\oplus _{i=1}^n {\BZ}\ap_i^\vee$  
is the {\it coroot lattice\/} in $V$.
Letting $\widehat V=V\oplus \BR\delta\oplus \BR\lb$, we extend
the inner product $(\ ,\ )$ on $\widehat V$ so that $(\delta,V)=(\lb,V)=(\delta,\delta)=(\lb,\lb)=0$ and 
$(\delta,\lb)=1$. Set $\ap_0=\delta-\theta$.

Then 
\begin{itemize}
\item[] \ 
$\HD=\{\Delta+k\delta \mid k\in \BZ\}$ is the set of affine
(real) roots; 
\item[] \ $\widehat{\Delta}^+= \Delta^+ \cup \{ \Delta +k\delta \mid k\ge 1\}$ is
the set of positive affine roots; 
\item[] \ $\widehat \Pi=\Pi\cup\{\ap_0\}$ is the corresponding set
of affine simple roots. 
\end{itemize}
For any $\gamma\in \HD$, the reflection $s_\gamma\in GL(\HV)$ is defined in the usual way, via the 
extended inner product, and
the affine Weyl group, $\HW$, is the subgroup of $GL(\HV)$
generated by the reflections $s_\ap$, $\ap\in\HP$. As is well known, $\HW$ is also a semi-direct product of
$W$ and $\mathcal Q^\vee$ \cite{bour,hump}. It follows that  $\HW$ has two natural actions:

(a) \ the linear action on $\HV$;

(b) \ the affine-linear action on $V$.
\\
Using the linear action, one defines the inversion
set $\widehat N(w)=\{\gamma\in \HD^+\mid w(\gamma) \in -\HD^+ \}$ and the length $\hat\ell(w)=\#\widehat N(w)$ 
for any $w\in \HW$.

The affine theory is well-developed, and we present below notable correlations with 
results of this article. An overview of the ``affine'' results discussed below 
can also be found in \cite[Section\,2]{losh}. 

{\it\bfseries 1)} \ By the very definition, $\HD$ is $\BZ$-graded, with $\HD(k)=\Delta+k\delta$, $k\in \BZ$.
Extending our previous terminology to the affine case, one can say that this $\BZ$-grading is $1$-standard. The unique affine simple root in $\HD(1)$ is $\ap_0$ and the parabolic subgroup
$\HW(0)$ is just $W$. Accordingly, the set of minimal length coset representatives is
\[
     \HW^0=\{w\in \HW \mid w(\ap) \in\HD^+ \ \text{ for all } \ \ap\in \Pi\}    
\]
(such elements of $\HW$ are called {\it dominant} in \cite{losh}.)
Let $\cI$ be an {\sl upper\/} ideal of the poset $(\Delta^+, \curle)$, i.e., $\cI\in \eus J_+(\Delta^+)$.
The affine theory 
gets off the ground when one replaces $\cI$ with
$\delta-\cI=\{\delta-\gamma \mid \gamma\in\cI\}\subset \HD(1)$ and seeks for a characterisation of 
$\delta-\cI$ is terms of $\HW$, or rather, in terms of $\HW^0$. 
Note that $\delta-\cI$ becomes a {\sl lower\/} ideal in the negative part of $\HD(1)\simeq \Delta$. 

{\it\bfseries 2)} \ Given  $\cI\in \eus J_+(\Delta^+)$, the first basic result is that there is a unique element 
$w_{\cI,{\rm min}}\in \HW^0$ of 
{\sf\bfseries minimal\/} length such that  $\HN(w_{\cI,{\rm min}})\cap \HD(1)=\delta-\cI$. Namely, 
\beq    \label{eq:CP-inversion}
\HN(w_{\cI,{\rm min}})=\bigcup_{k\ge 1}(k\delta-\cI^k)=\bigcup_{k\ge 1}(\delta-\cI)^k  .
\eeq
The key point is to prove that the RHS is a bi-convex subset of $\HD^+$, see \cite[Sect.\,2]{cp1}. Hence our Theorem~\ref{thm:biconvex-} is a ``finite'' analogue of that result.
Then the set of minimal elements of $\cI$ (called {\it generators} of $\cI$ in \cite{duality,losh}), i.e., maximal elements 
of $\delta-\cI$ can be characterised via $w_{\cI,{\rm min}}$, see~\cite[Theorem\,2.2]{duality}.
The corresponding ``finite'' assertion is our Theorem~\ref{thm:max-roots}. 

{\it\bfseries 3)} \ Since $\HW$ and $\HD$ are infinite, one cannot always provide an element 
$w_{\cI,{\rm max}}\in \HW^0$ of {\sf\bfseries maximal\/} length such that 
$\HN(w_{\cI,{\rm max}})\cap \HD(1)=\delta-\cI$. {Sommers} proves \cite{som05} that such a maximal element 
exists if and only if  $\cI\subset \Delta^+\setminus \Pi$. 
In that case, $w_{\cI,{\rm max}}$ can be used for describing the maximal elements of $\Delta^+\setminus\cI$,
i.e., the minimal elements of $\HD(1)\setminus (\delta-\cI)$, see \cite[Cor.\,6.3]{som05}. Our 
Theorems~\ref{thm:biconvex-} and \ref{thm:min-roots} provide finite analogues
of this for {\sl all\/} lower ideals in $\Delta(1)$.

{\it\bfseries 4)} \ In the finite case, $\Delta(1)$ is the weight poset of a {\sl weight multiplicity free\/} 
representation of $\g(0)$, and the maximal and minimal elements in $W^0$ exist for all lower ideals.
But the adjoint representation of $\g$ is {\sl not\/}  weight multiplicity free (unless $\g=\tri$). Therefore, 
in the affine case, one considers only the weight multiplicity free part of $\g$ corresponding to $\Delta^+$.
A related disadvantage is that $\Delta^+\setminus \cI$ shouldn't be called a ``lower ideal'' and that
$w_{\cI,{\rm max}}$ does not always exists, see {\it\bfseries 3)} above.

{\it\bfseries 5)} \ Among the advantages of the affine case are the following:
\begin{itemize}
\item $\HW=W\ltimes \mathcal Q^\vee$ is a semi-direct product having two related actions (on $V$ and $\HV$);
\item $\delta$ is a $\HW$-invariant element of $\HV$ and all the pieces $\HD(k)$ are isomorphic;
\end{itemize}
These properties often help in computations and allow to achieve more complete results. 
On the other hand, an advantage of the finite theory is that both $W$ and $W(0)$ contain the elements 
of maximal length, which yields a natural involution on $W^0$ and provides a relationship between
$W^0_{\rm min}$ and $W^0_{\rm max}$ in Proposition~\ref{lem:min-max-in-W^0}.

{\it\bfseries 6)} \ There are at least two approaches to computing the total number of upper ideals (antichains) 
in $\Delta^+$, which are discussed below.

(6a) \ There is a natural bijection between $\eus J_+(\Delta^+)$ and the $W$-dominant regions of the 
{\it Catalan arrangement\/} 
\[
   \mathsf{Cat}(\Delta)=\{\eus H_{\gamma,k}\mid \gamma\in\Delta^+, \ \ k=-1,0,1\},
\]
where $\eus H_{\gamma,k}=\{v\in V\mid (\gamma,v)=k\}$. Then an explicit formula for the characteristic 
polynomial of $\mathsf{Cat}(\Delta)$ yields a formula for $\# \eus J_+(\Delta^+)$, see~\cite{ath1}. A finite
counterpart of this approach is implemented in Section~\ref{sect:dcah}, $\mathsf{Cat}(\Delta)$ being 
replaced with $\eus A_\Delta(0,1)$. In particular, the ``finite'' analogue of the above bijection is our 
Theorem~\ref{thm:min-max-&-chambers}.

(6b) \  There is a natural bijection between the set of minimal elements in $\HW^0$, denoted 
$\HW^0_{\rm min}$, and the points of certain convex polytope $\mathcal{D}_{\rm min}\subset V$ lying in 
$\mathcal Q^\vee$~\cite[Prop.\,3]{cp2}. This polytope is $\HW$-conjugate to a dilated fundamental alcove of $\HW$, and the number 
\[
    \#(\mathcal{D}_{\rm min}\cap \mathcal Q^\vee)= \# \HW^0_{\rm min}=\#\eus J_+(\Delta^+)
\]
can be computed via a result of Haiman, see \cite[Section\,3]{cp2} for details. To construct a bijection
$\HW^0_{\rm min} \stackrel{1:1}{\longleftrightarrow} \mathcal{D}_{\rm min}\cap \mathcal Q^\vee$,
{Cellini} and {Papi} use the semi-direct product structure of $\HW$. However, one can notice that the following
synthetic procedure works. If $w_{\cI,{\rm min}}$ is defined by \eqref{eq:CP-inversion} and 
`$\ast$' denotes the affine-linear action of $\HW$, then the point of $\mathcal Q^\vee$ corresponding to 
$\cI$ is merely $w_{\cI,{\rm min}}\ast 0$. 

{\bf Warning}. {Cellini} and {Papi}~\cite{cp1,cp2} give the definition of the inversion set $\widehat N(w)$
with the inverse of $w\in\HW$.
Therefore, their minimal element corresponding to $\cI$ is the inverse of ours, and hence the
points of $\mathcal Q^\vee$ corresponding to  $\HW^0_{\rm min}$ are also different. 

Since $W=\HW(0)$ is the stabiliser of $0\in V$ w.r.t. the affine-linear action, a finite analogue of the 
Cellini-Papi bijection is the following. 
Suppose that  a $\BZ$-grading of $\Delta$ is $1$-standard and $\Pi(1)=\{\tilde\ap\}$. Then
$W(0)$ is the stabiliser of the fundamental weight $\varpi^\vee_{\tilde\ap}$ 
(Section~\ref{sect:min-and-max-els}) and we need the cardinality of 
$W^0_{\rm min}{\cdot}\varpi^\vee_{\tilde\ap}$. This subset of the orbit $W{\cdot}\varpi^\vee_{\tilde\ap}=
W^0{\cdot}\varpi^\vee_{\tilde\ap}$
is explicitly described,  see Theorem~\ref{thm:injective-eta} and Remark afterwards, but we are unable (yet)  to infer from this a way to compute the 
cardinality.

\begin{rmk}
There are many other aspects of the affine theory that are not mentioned above. Developing their 
``finite'' counterparts can (and will) be the subject of forthcoming publications.
\end{rmk}

\appendix
\section{A partition associated with an upper ideal of $\Delta^+$} 
\label{app:A}

Let $\mathcal I$ be an upper ideal of the poset $(\Delta^+, \curle)$ and $\mathcal I^c=\Delta^+\setminus \mathcal I$. Define 
\[
    \lb_i=\# \{\gamma\in \mathcal I^c\mid \hot(\gamma)=i\} .
\]
Our goal is to give a case-free proof of the following observation, see \cite[Prop.\,3.1]{som-tym}.
\begin{prop}   \label{prop:appendix}
The sequence $(\lb_1,\lb_2,\dots)$ is a partition of the number of roots of $\mathcal I^c$. That is, $\lb_1\ge\lb_2\ge\dots$. \ 
Moreover, if $\mathcal I\ne\Delta^+$, then $\lb_1>\lb_2$.
\end{prop}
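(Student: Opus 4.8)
The plan is to reduce the monotonicity $\lb_i\ge\lb_{i+1}$ to an injection, for each $i\ge 1$, from the set $S_{i+1}=\{\gamma\in\mathcal I^c\mid\hot(\gamma)=i+1\}$ into the set $S_i=\{\gamma\in\mathcal I^c\mid\hot(\gamma)=i\}$. The natural candidate is: given $\gamma\in S_{i+1}$, peel off a simple root, i.e.\ find $\ap\in\Pi$ with $\gamma-\ap\in\Delta^+$, and send $\gamma\mapsto\gamma-\ap$. Two things must be arranged. First, $\gamma-\ap$ must land in $\mathcal I^c$: this is automatic because $\mathcal I$ is an \emph{upper} ideal, so $\mathcal I^c$ is a \emph{lower} ideal, and $\gamma-\ap\curle\gamma$ with $\gamma\in\mathcal I^c$ forces $\gamma-\ap\in\mathcal I^c$. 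Second — and this is the crux — the choice of $\ap=\ap(\gamma)$ must be made so that the resulting map is \emph{injective}. A purely local ``pick any admissible $\ap$'' rule will not be injective in general, so I would fix, once and for all, a total order on $\Pi$ and always choose $\ap(\gamma)$ to be, say, the \emph{largest} simple root (in that fixed order) for which $\gamma-\ap\in\Delta^+$.

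To prove injectivity with this rule, suppose $\gamma,\gamma'\in S_{i+1}$ with $\gamma-\ap(\gamma)=\gamma'-\ap(\gamma')=:\mu$. If $\ap(\gamma)=\ap(\gamma')$ we are done, so assume $\ap(\gamma)\ne\ap(\gamma')$; write $\beta=\ap(\gamma)$, $\beta'=\ap(\gamma')$, so $\gamma=\mu+\beta$ and $\gamma'=\mu+\beta'$ are both roots. The key structural input is the standard fact about root strings / addition of simple roots: if $\mu+\beta$ and $\mu+\beta'$ are both roots with $\beta,\beta'$ distinct simple roots, then $\mu+\beta+\beta'$ is also a root (this follows from Lemma~\ref{lem:tri-kornya} applied with $\nu_1=\beta$, $\nu_2=\beta'$, after checking $\beta+\beta'\in\Delta$; alternatively it is immediate from the theory of root strings, since $(\mu,\beta)$ and $(\mu,\beta')$ cannot both be $\ge 0$ unless one of the sums already witnesses the claim). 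Granting $\gamma''=\mu+\beta+\beta'\in\Delta^+$: then $\gamma''-\beta=\gamma'\in\Delta^+$ and $\gamma''-\beta'=\gamma\in\Delta^+$, so \emph{both} $\beta$ and $\beta'$ are ``admissible to subtract'' from $\gamma''$. By the maximality rule defining $\ap(\cdot)$, we get $\ap(\gamma'')\ge\max(\beta,\beta')$; say WLOG $\beta>\beta'$, so $\ap(\gamma'')=\beta$ provided no simple root exceeding $\beta$ can be subtracted from $\gamma''$ — but any such would also be subtractable from one of $\gamma,\gamma'$, contradicting $\ap(\gamma)=\beta$ being maximal for $\gamma$. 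Hence $\ap(\gamma'')=\beta=\ap(\gamma)$. Now iterate: $\gamma''\in S_{i+2}\cap\mathcal I^c$ maps to $\gamma'=\gamma''-\beta$, but also one checks $\gamma$ is obtained from $\gamma''$ by subtracting $\beta'<\beta$. This is where I expect the argument to need care: the clean way is to set up the whole map $S_{k+1}\to S_k$ simultaneously for all $k$ and argue that the ``maximal simple root'' choice is consistent along two-step descents — concretely, one shows the map $\gamma\mapsto\gamma-\ap(\gamma)$ has the property that $\ap(\gamma-\ap(\gamma))\ge\ap(\gamma)$ is impossible to violate downward, forcing $\beta=\beta'$ after all and contradicting $\beta\ne\beta'$. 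I would write this as the single lemma: \emph{if $\mu,\mu+\beta,\mu+\beta'\in\Delta^+$ with $\beta\ne\beta'\in\Pi$ and $\beta>\beta'$, then $\ap(\mu+\beta)=\beta$ or there is a strictly longer root above both} — and push the case analysis through, the finiteness of $\Delta$ terminating the induction.

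For the strict inequality $\lb_1>\lb_2$ when $\mathcal I\ne\Delta^+$: here $\lb_1=\#(\Pi\cap\mathcal I^c)$ and $\lb_2$ counts height-$2$ roots in $\mathcal I^c$. If $\mathcal I\ne\Delta^+$ then $\mathcal I^c\ne\varnothing$, and since $\mathcal I^c$ is a nonempty lower ideal it contains at least one simple root; in fact it must contain a ``connected'' initial segment of $\Pi$ in the Dynkin-diagram sense. A height-$2$ root is $\ap_i+\ap_j$ with $\ap_i,\ap_j$ adjacent in the diagram, and if $\ap_i+\ap_j\in\mathcal I^c$ then both $\ap_i,\ap_j\in\mathcal I^c$ (lower ideal again); so the map $\ap_i+\ap_j\mapsto\ap(\ap_i+\ap_j)$ lands in $\Pi\cap\mathcal I^c$, and it fails to be surjective because a simple root $\ap$ in $\mathcal I^c$ that is minimal with respect to being ``covered from above inside $\mathcal I^c$'' — e.g.\ a simple root in $\mathcal I^c$ adjacent in the diagram to a simple root \emph{not} in $\mathcal I^c$, or more simply the $\ap$ attaining the overall maximum in our fixed order among those in $\mathcal I^c$ — is never hit. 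Equivalently: the injection $S_2\hookrightarrow S_1$ from above misses any $\ap\in\Pi\cap\mathcal I^c$ such that $\ap$ is not of the form $\ap(\ap_i+\ap_j)$, and since the connectedness of the Dynkin diagram together with $\mathcal I^c\ne\Delta^+$ (hence $\mathcal I^c\ne$ all of $\Pi$'s span data) guarantees at least one such $\ap$ exists, we conclude $\lb_1>\lb_2$. The main obstacle in the whole proof is pinning down the injectivity of the descent map with a canonical simple-root choice; everything else is bookkeeping with lower ideals and root strings, for which Lemma~\ref{lem:tri-kornya} is the right tool.
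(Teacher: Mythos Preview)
Your approach has a genuine gap: the ``subtract the largest admissible simple root'' rule is \emph{not} injective, and no variant of a local ``pick an extremal simple root'' rule can be. Take $\Delta$ of type $\GR{D}{4}$ with $\ap_2$ the branch node and any total order on $\Pi$ in which, say, $\ap_1<\ap_3<\ap_4$. The three height-$3$ roots are $\ap_1{+}\ap_2{+}\ap_3$, $\ap_1{+}\ap_2{+}\ap_4$, $\ap_2{+}\ap_3{+}\ap_4$, and from each of these only the two ``leaf'' simple roots can be subtracted. Your rule sends both $\ap_1{+}\ap_2{+}\ap_3$ and $\ap_1{+}\ap_2{+}\ap_4$ to $\ap_1{+}\ap_2$. (With the ``smallest'' rule one gets a collision at $\ap_2{+}\ap_4$ instead; by the triality symmetry, every total order produces a collision.) Your attempted repair---pass to $\gamma''=\mu+\beta+\beta'$ and ``iterate''---does not resolve this: it just pushes the collision at level $i{+}1\to i$ to a statement about level $i{+}2\to i{+}1$, and the vague claim that $\ap(\gamma-\ap(\gamma))\ge\ap(\gamma)$ cannot be ``violated downward'' is simply false in the example above. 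Note also that your invocation of Lemma~\ref{lem:tri-kornya} is in the wrong direction (that lemma deduces $\mu+\nu_i\in\Delta$ from $\mu+\nu_1+\nu_2\in\Delta$, not the converse), and it requires $\beta+\beta'\in\Delta$, which fails whenever $\beta,\beta'$ are non-adjacent.

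The underlying difficulty is real: the inequality $\lb_i\ge\lb_{i+1}$ is equivalent to a Hall-type matching condition on the Hasse diagram between consecutive height levels, and there is no purely local rule that produces the matching. The paper's proof sidesteps this by a Lie-theoretic argument: take a principal nilpotent $e=\sum_{\ap\in\Pi}e_\ap$, use that $\ad(e):\ut\to\ut'=[\ut,\ut]$ is surjective (since $\z_\g(e)\subset\ut$ has dimension $n$), and pass to the quotient by $\ce_{\mathcal I}$ to get a degree-$1$ surjection between graded spaces whose graded pieces have dimensions $\lb_i$. This gives all the inequalities at once, and the strict inequality $\lb_1>\lb_2$ falls out because the image of $e$ in the quotient is a nonzero element of the kernel. (Your strictness argument also confuses the hypothesis: $\mathcal I\ne\Delta^+$ means $\mathcal I^c\ne\varnothing$, not $\mathcal I^c\ne\Delta^+$, so the case $\mathcal I=\varnothing$ must be handled too.)
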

\begin{proof}
We use some properties of a principal nilpotent element in the corresponding simple Lie algebra $\g$.
Recall that $\g=\ut \oplus\te\oplus\ut^-$ is a fixed triangular decomposition,  
$\Delta^+$ is the set of $\te$-roots in $\ut $, and $\g_\gamma$ is the roots space corresponding to 
$\gamma\in\Delta$. Take 
$e=\sum_{\ap\in\Pi}e_\ap$, where $e_\ap$ is a nonzero element of $\g_\ap$. After work of {Dynkin} and
{Kostant} in 1950's, it is known that $e$ is a principal nilpotent element of $\g$. Specifically, we need the following properties of the centraliser $\z_\g(e)$ of $e$:

\centerline{ $\z_\g(e)\subset \ut $ \ and \ $\dim \z_\g(e)=n=\rk \g$.}

\noindent
The $\te$-roots in the derived subalgebra $\ut'=[\ut ,\ut ]$ are exactly the non-simple positive roots,
 hence  $\ut'$ is of codimension $n$ in $\ut $. Combining the above properties, we see that
the mapping
$ \ad (e):  \ut  \to \ut'$
is onto. Moreover, both vector spaces are graded:
\[
  \ut =\bigoplus_{i=1}^{h-1} \ut\langle i\rangle \ \text{ and } \ 
  \ut'=\bigoplus_{i=2}^{h-1} \ut\langle i\rangle,
\] 
where $\ut\langle i\rangle=\bigoplus_{\gamma: \hot(\gamma)=i}\g_\gamma$, and $\ad(e)$ is a
homomorphism of degree $1$.
Let $\ce_{\mathcal I}=\bigoplus_{\gamma\in\mathcal I}\g_\gamma$ be the $\be$-stable subspace of 
$\ut $ corresponding to $\mathcal I$.
The quotient spaces  $\ut /\ce_{\mathcal I}$ and  $\ut'/(\ce_{\mathcal I}\cap \ut')$ inherit the above 
grading and  the commutative diagram 
\[
\begin{array}{ccc}  \ut  & \stackrel{\ad(e)}{\longrightarrow} & \ut' \\
\downarrow && \downarrow \\
\ut /\ce_{\mathcal I} & \stackrel{\ad(e)}{\longrightarrow} & \ut'/(\ce_{\mathcal I}\cap \ut')
\end{array}
\]
shows that the map in the bottom row is also graded surjective, of degree $1$. Furthermore, let 
$\tilde\ut\langle i\rangle$ be the component of grade $i$ in $\ut /\ce_{\mathcal I}$. Then
$\ut /\ce_{\mathcal I}=\bigoplus_{i\ge 1}\tilde\ut\langle i\rangle$,
$\ut'/(\ce_{\mathcal I}\cap \ut')=\bigoplus_{i\ge 2}\tilde\ut\langle i\rangle$, 
and $\dim \tilde\ut\langle i\rangle=\lb_i$. Consequently, the graded surjectivity implies that 
$\lb_i\ge \lb_{i+1}$ for all $i$.

Finally, if $\ce_{\mathcal I}\ne \ut$, then $\tilde\ut\langle 1\rangle\ne 0$, and the image of 
$e\in\ut\langle 1\rangle\subset \ut$ in $\tilde\ut\langle 1\rangle \subset \ut /\ce_{\mathcal I}$ is a {\it nonzero\/} 
element in the kernel of 
$\ad(e)$. Hence $\lb_1> \lb_2$.
\end{proof}

\end{document}